\def\paperversion{2}
\ifnum\paperversion=1
\documentclass[opre,blindrev]{informs4}
\fi
\ifnum\paperversion=2
\documentclass[opre]{informs4}
\fi
\OneAndAHalfSpacedXI

\usepackage{silence}
\usepackage{endnotes}
\usepackage[toc,page]{appendix}

\WarningFilter{minitoc(hints)}{W0018}
\WarningFilter{minitoc(hints)}{W0028}
\WarningFilter{minitoc(hints)}{W0030}

\WarningFilter{blindtext}{} %

\usepackage{endnotes}
\let\footnote=\endnote

\usepackage{fix-cm}
\OneAndAHalfSpacedXII %
\usepackage{amsmath,amssymb,amsfonts}
\usepackage{amsmath}
\usepackage{bm}
\usepackage{algorithm}
\usepackage{algpseudocode}
\usepackage{tikz}
\usepackage{enumerate}
\usepackage{multirow}
\usepackage{subfigure}

\usepackage{natbib}
 \bibpunct[, ]{(}{)}{,}{a}{}{,}%

\usepackage{hyperref}
\usepackage{threeparttable}
\usepackage{xcolor}
\hypersetup{%
  colorlinks=true,%
  linkcolor={blue},%
  linkbordercolor=red,%
  citecolor={black}
}
\usepackage{cleveref}

\def\E{{\mathbb E}}

\def\Re{\mathbb{R}}

\def\hat{\widehat}

\def\O{{\mathcal O}}

\def\P{{\mathbb P}}

\def\Re{{\mathbb R}}
\def\S{{\mathcal S}}

\def\O{{\mathcal O}}

\newcommand{\exclude}[1]{}

\DeclareMathOperator{\conv}{conv}

\DeclareMathOperator{\Diag}{Diag}

\DeclareMathOperator{\tr}{tr}

\renewcommand*{\qed}{\hfill\ensuremath{\square}}
\newcommand*{\qedA}{\hfill\ensuremath{\diamond}}
\EquationsNumberedThrough    %

\TheoremsNumberedThrough     %
\ECRepeatTheorems  %
\EquationsNumberedThrough
\MANUSCRIPTNO{IJOC-0001-2024.00}

\usepackage{anyfontsize}
\usepackage{multirow}%

\providecommand{\customgenericname}{}
\newcommand{\newcustomtheorem}[2]{%
  \newenvironment{#1}[1]
  {%
   \renewcommand\customgenericname{#2}%
   \renewcommand\theinnercustomgeneric{##1}%
   \innercustomgeneric
  }
  {\endinnercustomgeneric}
}
\newcustomtheorem{customthm}{Assumption}

\begin{document}

\RUNAUTHOR{Yongchun Li}

\RUNTITLE{Strong Formulations and Algorithms for Regularized A-optimal Design}

\TITLE{Strong Formulations and Algorithms for Regularized A-optimal Design}

\ARTICLEAUTHORS{
\AUTHOR{Yongchun Li}
\AFF{Department of Industrial and Systems Engineering, University of Tennessee, Knoxville, \EMAIL{ycli@utk.edu}}}

\ABSTRACT{We study the Regularized A-optimal Design (RAOD) problem, which selects a subset of $k$ experiments   to minimize the inverse of the Fisher information matrix, regularized with a scaled identity matrix.  RAOD has broad applications in Bayesian experimental design, sensor placement, and cold-start recommendation.  We prove its NP-hardness via a reduction from the independent set problem. By leveraging convex envelope techniques, 
we propose a new convex integer programming formulation for RAOD, 
whose continuous relaxation  dominates those of existing formulations. 
More importantly, we demonstrate that our continuous relaxation achieves bounded optimality gaps for all $k$, 
whereas previous relaxations may suffer from unbounded gaps.
This new formulation enables the development of an exact cutting-plane algorithm with superior efficiency, especially in high-dimensional and small-$k$ scenarios.
We also investigate  scalable forward and backward greedy algorithms for solving RAOD, each with provable performance guarantees for different $k$ ranges.
Finally, our numerical results on synthetic and real data
 demonstrate the efficacy of the proposed exact and approximation algorithms.
We further showcase the practical effectiveness of RAOD by applying it to a real-world user cold-start recommendation problem. 
}

\KEYWORDS{Regularized A-optimality, Mixed-integer nonlinear optimization, Convex relaxations, Cutting-plane algorithm, Forward and backward greedy algorithms, User cold-start recommendation} 

\maketitle

\section{Introduction}\label{sec:Intro}
The A-optimal Design (AOD) problem arises in   statistical design for situations where, due to budget limitations, 
one can observe the outcomes for a small subset of experiments to optimize the parameter estimates. Specifically, AOD aims to minimize the trace of the inverse of the Fisher information matrix, resulting in minimizing the average variance of the parameter estimates.
AOD has found broad applications in fields such as engineering \citep{ilzarbe2008practical,mason2003statistical}, biology \citep{quinn2002experimental}, and  chemistry \citep{higgs1997experimental,leardi2009experimental}, when dealing with large-scale and costly experiments.
Linear regression models are one of the most widely used and studied models in this area (see, \citealt{jobson2012applied,nikolov2022proportional,winer1971statistical}).
This paper focuses on the  Regularized AOD (\ref{aed}) problem in the context of linear regression models,  which adds a scaled identity matrix to the Fisher information matrix, as shown below. This addition typically helps prevent over-fitting \citep{tantipongpipat2020lambda}, improves numerical stability when the information matrix is rank-deficient \citep{derezinski2017subsampling}, and encodes prior information \citep{bian2017guarantees}. 
\begin{align}\label{aed}
z_k:=\min_{S\subseteq [n]} \bigg\{\tr \bigg(\bigg(\sum_{i\in S} \bm a_i\bm a_i^{\top}+ \lambda \bm I_d\bigg)^{-1} \bigg): |S|=k \bigg\}, \tag{RAOD}
\end{align}
where $\{\bm a_i\}_{i\in [n]}\in \Re^d$ represents a collection of $n$ experiments parameterized by $d$-dimensional vectors with $d\le n$, $k\le n$ denotes the number of chosen experiments, $\lambda>0$ is a constant, $\bm I_d$ is a $d\times d$ identity matrix, and for a subset $S\subseteq [n]$, let $|S|$ denote its cardinality. Note that the selection size $k$ can be smaller or larger than the dimension $d$, depending on the specific problem setup.

The following outlines the two most relevant variants and an equivalent problem of \ref{aed}, which will be  used throughout to show the scope and limitations of existing research, facilitate our analysis, and highlight the general applicability of some of our results.
\begin{enumerate}[(i)]
\item \textit{AOD}. As previously mentioned, $\lambda$ is fixed at zero in AOD.
It is noted that the condition $k\ge d$ must hold for AOD to ensure that the objective matrix is invertible;
\item \textit{Bayesian AOD}. Since specific information is usually available prior to the  experimental selection process, Bayesian AOD, which leverages prior knowledge, can play an important role in improving design efficiency (see the excellent survey by \citealt{chaloner1995bayesian,rainforth2024modern}). Mathematically, it generalizes the scaled identity matrix in  \ref{aed} to an arbitrary positive definite matrix representing prior information.  Bayesian AOD is also referred to as the A-Fusion problem  in \cite{hendrych2023solving}.
When the prior is an isotropic Gaussian, Bayesian AOD reduces to \ref{aed} as a  special case (see, e.g., \citealt{bian2017guarantees,chamon2017approximate}); and
    \item  \textit{A-optimal Maximum Entropy Sampling Problem (AMESP)}. As  introduced in \cite{li2024best}, AMESP focuses on selecting the most informative principal submatrix from a given covariance matrix by minimizing the trace of its inverse. We are the first to demonstrate
    that \ref{aed} and AMESP are equivalent, with an additive additional constant of $(d-k)/\lambda$,  when the covariance matrix in AMESP is positive definite.
\end{enumerate}

Although \ref{aed} is a special case of Bayesian AOD, it remains broadly applicable. One key motivation comes from the user cold-start problem in recommendation systems. The problem arises when a new user joins a recommendation platform and has not rated any item, making recommendations challenging due to the lack of historical data \citep{lika2014facing}.

Traditional methods, such as content-based filtering and collaborative filtering, exploit relationships and similarities between users to make recommendations \citep{bobadilla2012collaborative,volkovs2017content}. Alternatively, experimental design approaches, including sequential strategies, tackle the problem at its root by explicitly asking new users to rate selected items upon joining  a recommendation platform (see, e.g.,  \citealt{anava2015budget,elahi2016survey,gope2017survey}).  These items are selected carefully to capture new users' preferences. However, users are often reluctant to rate many items due to cognitive costs. If users rate too few items, the Fisher information matrix becomes rank-deficient, leading to ill-posed matrix inverse and determinant problems in experimental design. 
To mitigate this, regularized experimental design has emerged as a practical  approach by adding a scaled identity matrix, as shown in prior research by \cite{barz2015nonlinear,anava2015budget,rubens2009output}.
In particular, \cite{chamon2017approximate} applied the \ref{aed} approach  and demonstrated its effectiveness in selecting movies for new users to rate.

\subsection{Related Work}
In this subsection, we review the relevant literature on \ref{aed} and its variants, with a focus on their exact algorithms, convex relaxations, and approximation algorithms.

\noindent\textit{Bayesian D-optimal Design (Bayesian DOD) \& Bayesian AOD.} 
In experimental designs,   the $D$- and A-optimality criteria are widely used for selecting experiments to achieve precise parameter estimation.
A-optimality minimizes the estimation variance directly, while D-optimality seeks to reduce the volume of the confidence ellipsoid of the parameter estimates by maximizing the determinant of the Fisher information matrix. Notably, given that the determinant is a multiplicative operator,  Bayesian DOD  is equivalent to regularized DOD \citep{li2024d}; however, this equivalence does not hold between Bayesian AOD and \ref{aed}.

A-optimality is known to be computationally more expensive than   D-optimality due to its need to compute the inverse matrix \citep{ahipacsaouglu2021branch}. 
As a result, algorithms and software for Bayesian DOD are more developed than Bayesian AOD (see \citealt{jones2021optimal}).
Unfortunately, these techniques cannot be directly applied to Bayesian AOD.  
Nevertheless, Bayesian AOD often outperforms  Bayesian DOD, with improved variance and bias properties (see, e.g., \citealt{jones2021optimal,stallrich2023d}).   Our numerical study on cold-start recommendation further shows the superior performance of \ref{aed} compared to regularized DOD.
 In addition,
 DOD, AOD, and Bayesian DOD   have been proven to be NP-hard, see \cite{welch1982algorithmic,nikolov2022proportional,li2024d}, respectively. In contrast, the complexity of Bayesian AOD remains an open question, though \cite{hendrych2023solving} conjectured that it was NP-hard.
The limited literature on Bayesian AOD motivates us to develop efficient algorithms for its special case, \ref{aed}. 
While this paper does not address the general Bayesian AOD problem, we demonstrate its NP-hardness for the first time.

\vspace{1em}
\noindent\textit{Exact Algorithms.} 
Experimental design problems can be formulated as mixed-integer nonlinear programs (MINLPs) by introducing binary variables to represent subset selection. In view of this, advancing exact solution methods for these MINLPs has been a longstanding research focus.
Given the complexity of the A-optimality function, \ref{aed} and its variants naturally reduce to mixed-integer semidefinite programs (MISDPs) \citep{duarte2023exact}.
To solve these MISDPs, the branch-and-bound (B\&B) method has been extensively studied in the literature (see, e.g., \citealt{ahipacsaouglu2021branch,hendrych2023solving,liang2024pnod}).
These works leveraged the standard continuous relaxation of \ref{aed} to provide a lower bound, referred to as \ref{aed-R1} throughout this paper,  which is also reviewed in detail below. 

 An alternative approach formulates \ref{aed} and its variants as mixed-integer second-order cone programs (MISOCPs). It is worth noting that unlike MISDP,  modern optimization solvers such as Gurobi and CPLEX are well-suited  for solving MISOCP.
\cite{sagnol2011computing} pioneered  this direction by applying Elfving's Theorem
 to demonstrate the second-order representability of the continuous relaxation of  AOD, and it is actually a consequence of Gauss–Markov Theorem. \cite{duarte2015finding} later extended the result to the Bayesian version. Building on these,
a seminar work by \cite{sagnol2015computing} developed exact MISOCP reformulations of AOD and  Bayesian AOD.

 \vspace{1em}
\noindent 
\textit{Convex Relaxation.} A natural way to get rid of the integer challenge  is relaxing the binary variables in the exact MISDP reformulation of \ref{aed} to be continuous, which leads to the convex relaxation \ref{aed-R1}. Much of the literature on the relaxation has focused on developing efficient convex optimization algorithms, see  \cite{ahipacsaouglu2015first,liang2024pnod}, and many references therein.
However, despite these computational advancements, we demonstrate that \ref{aed-R1}  may perform poorly when $k$ is within the range of $[d-1]$.
This limitation
motivates us to develop stronger relaxations, which are important for improving the efficiency of exact algorithms.

\vspace{1em}
\noindent \textit{Approximation Algorithms.}
\cite{avron2013faster} studied the classic AOD problem
and proposed  volume-sampling-based algorithms that achieved an approximation ratio of $(n-d+1)/(k-d+1)$ for $k\in [d, n]$. The result was later extended to \ref{aed} by \cite{derezinski2017subsampling} in the same  regime of $k$, who developed a faster $\O(nd^2)$-time sampling algorithm. 

Another approach to developing sampling algorithms is based on convex relaxations. These algorithms  start from solving a convex relaxation and then, based on the continuous solution,  design randomized sampling strategies to obtain a discrete solution with approximation guarantees,  see \citealt{nikolov2022proportional,wang2017computationally} for the classic AOD problem, \cite{tantipongpipat2020lambda,li2024best} for \ref{aed}, and \cite{allen2017near,derezinski2020bayesian} for Bayesian AOD, respectively. However, these convex relaxation-based sampling algorithms  of \ref{aed} often scale poorly with
the number of experiments $n$  (see, e.g., the fourth and fifth rows of \Cref{table:approx}). 
It is important to note that when applying the approximation algorithms developed for AMESP by \cite{li2024best} to \ref{aed}, an additional constant $(d-k)/\lambda$ must be added, as shown in \Cref{prop:amesp}. Hence, their approximation ratios remain valid when $k \le d$ after adding a nonnegative constant, since for any $c_1\ge c_2$, it is easy to verify that $\frac{c_1+(d-k)/\lambda}{c_2+(d-k)/\lambda}
\le \frac{c_1}{c_2}$. However, these ratios do not hold for $k \in [d+1, n]$, where $(d-k)/\lambda$ becomes negative.

Natural combinatorial algorithms, such as the greedy and local search algorithms,  have been widely used in experimental design (see, e.g.,
\citealt{krause2008near,madan2019combinatorial}).  The forward greedy algorithm has also been extended to Bayesian criteria in \cite{bian2017guarantees,chamon2017greedy}. 
However, A-optimality is not a supermodular function \citep{chamon2017approximate},  which means that the typical approximation ratio for forward greedy (i.e., $(1 - 1/e)$) cannot apply. Instead, these greedy methods often achieve data-dependent guarantees, where the worst-case performance can vary significantly with the experimental setup.  
By data-independence, we mean a theoretical guarantee that does not rely on the experiments $\{\bm a_i\}_{i\in [n]}$ or $\lambda$ but may depend on the parameters $n,d$, and $k$. We are unaware of any prior work on developing a backward greedy search method in the literature of \ref{aed}. This gap is addressed in our paper.
Notably, \cite{li2024best} proposed a local search algorithm for AMESP with strong empirical performance, but its theoretical guarantee is data-independent. 
The performance guarantees and complexities of the approximation algorithms reviewed for \ref{aed} are summarized in \Cref{table:approx}.

\subsection{Summary of the Organization and Contributions}
 Below we list the major contributions and an outline of the remaining  paper.

\begin{enumerate}[(i)]
    \item \textbf{Complexity analysis.} In \Cref{sec:np}, we prove the NP-hardness of \ref{aed} by establishing its equivalence to \ref{amesp} and reducing the independent set decision problem to \ref{amesp};

    \item \textbf{Optimality gaps of convex relaxations.}  \Cref{sec:Method} presents existing (mixed-)integer convex formulations of \ref{aed} and analyzes the optimality gaps of their convex relaxations, as summarized in \Cref{table:rel}.
    Notably, we show that the conventional relaxation \ref{aed-R1} can have unbounded gaps for $k \in [d-1]$, while the relaxation \ref{amesp-R} of \ref{amesp} may perform worse than the trivial zero lower bound for $k\in [d+1,n]$. Finally, in
 Subsection \ref{subsec:misocp}, we revisit the formulation \ref{aed_miscop} from an optimization perspective and prove that its continuous relaxation coincides with \ref{aed-R1};

     \item \textbf{New formulations.} 
Motivated by the weaknesses of existing formulations, we propose a novel convex integer programming formulation  for \ref{aed} in \Cref{subsec:augmesp}, whose continuous relaxation \ref{aed-R2} is tighter than those presented in \Cref{sec:Method}. More importantly, \ref{aed-R2} achieves  bounded optimality gaps for all $k\in [n]$,  which addresses a key drawback of \ref{aed-R1} and \ref{amesp-R}.
Building on our new formulation, we develop an exact cutting-plane algorithm to solve \ref{aed} to optimality;

\item \textbf{Greedy algorithms.}  In \Cref{sec:approx}, we first establish a data-independent approximation ratio for the forward greedy algorithm when $k\in [d-1]$. To complement this, we propose a backward greedy algorithm for \ref{aed} and derive its data-independent performance guarantee for $k\in [d, n]$. These results, summarized in \Cref{table:approx}, present a comprehensive analysis of our greedy algorithms across different ranges of $k$; and
    
    \item \textbf{Numerical study.} Finally, in \Cref{sec:num},  our numerical study on synthetic and real data demonstrates the effectiveness of our exact algorithm and the scalability and high-quality outputs of our approximation algorithms.
    In particular, when applied to the user cold-start problem in movie recommendation systems, \ref{aed} effectively identifies movie subsets that lead to more reliable recommendations for new users in most instances, outperforming its D-optimality counterpart.
\end{enumerate}

\begin{table}[htbp]
\caption{Summary of convex relaxations for \ref{aed}} 
\centering
\label{table:rel}
\setlength{\tabcolsep}{6pt}\renewcommand{\arraystretch}{1}
\begin{tabular}{c|r| r}
\hline
\multicolumn{1}{c|}{\textbf{Convex relaxation}}  & \multicolumn{1}{c|}{\textbf{Optimality gap (i.e., ${z}_k$/relaxation value) }} & \multicolumn{1}{c}{\textbf{Value of $k$}}\\
\hline
\multirow{3}{*}{\ref{aed-R1}}  & Can be arbitrarily large  & $k\in [d-1]$\\
\cline{2-3}
&  $\min\left\{d, n-d+1\right\}$ & $k=d$\\
\cline{2-3}
& $\frac{n-d+1}{k-d+1}$ & $k\in [d+1,n]$ \\
\hline
\multirow{3}{*}{\ref{amesp-R}} & $\min\left\{ \frac{d-1}{d-k}, n-k+1\right\}$ & $k\in [d-1]$ \\
\cline{2-3}
&  $\min\left\{d, n-d+1\right\}$ & $k=d$\\
\cline{2-3}
& Can be negative & $k\in [d+1, n]$\\
\hline
\multirow{3}{*}{\ref{aed-R2}} & $\min\left\{ \frac{d-1}{d-k}, n-k+1\right\}$ & $k\in [d-1]$ \\
\cline{2-3}
&  $\min\left\{d, n-d+1\right\}$ & $k=d$\\
\cline{2-3}
& $\frac{n-d+1}{k-d+1}$ & $k\in [d+1, n]$\\
\hline
\end{tabular}%
\end{table}

\begin{table}
\caption{Summary of approximation algorithms for \ref{aed}} 
\centering
\label{table:approx}
\begin{threeparttable}
\setlength{\tabcolsep}{2pt}\renewcommand{\arraystretch}{1}
\begin{tabular}{c|r|r}
\hline
\multicolumn{1}{c|}{\textbf{Approximation algorithm}} & \multicolumn{1}{c|}{\textbf{Time complexity}} & \multicolumn{1}{c}{\textbf{Approximation ratio}}\\
\hline
Sampling \citep{derezinski2017subsampling} & $\O((n^2+d^2)d)$ & $\frac{n-d+1}{k-d+1}$ if $k\in [d,n]$\\
\hline
Fast sampling \citep{derezinski2017subsampling} & $\O((n+d)d^2)$ & $\frac{n-d+1}{k-d+1}$ if $k\in [d,n]$\\
\hline
 Forward greedy \citep{chamon2017greedy}  & $\O(knd^2)$ &$-$\tnote{i} \\
\hline
Sampling  \citep{tantipongpipat2020lambda} & $\O(n^4dk^2\log(dk))$ & $-$  \\
\hline
Sampling \citep{li2024best} & $\O(n^4dk^2\log(dk))$ &  $\min\{k, n-k+1\}$ if $k\in [d]$\\
\hline
Local search \citep{li2024best} & $\O(k(n-k)d^5)$ & $-$\\
\hline
\hline
Forward greedy \Cref{algo:forward} & $\O(knd^2)$ & $\frac{d-1}{d-k}$ if $ k\in [d-1]$\\
\hline
Backward greedy \Cref{algo:backward} & $\O((n-k)nd^2)$ &  $\frac{n-d+1}{k-d+1}$ if $k\in [d,n]$ \\
\hline
\end{tabular}%
\begin{tablenotes}
    \item[i]``$-$": The approximation ratio depends on data $\{\bm a_i\}_{i\in [n]}$ and the constant $\lambda$
\end{tablenotes}
\end{threeparttable}
\end{table}

 \textit{Notations.} We let $\S_+^n,\S_{++}^n$ denote the set of all the $n\times n$ symmetric positive semidefinite and positive definite matrices, respectively.
 We let $\Re^n$, $\mathbb{R}_+^n$, and $\mathbb{R}_{++}^n$ denote the set of all the $n$-dimensional vectors, nonnegative vectors, and positive vectors, respectively.
  We use bold lower-case letters
(e.g., $\bm x$) and bold upper-case letters (e.g., $\bm X$) to denote vectors and matrices, respectively, and use
corresponding non-bold letters (e.g., $x_i$) to denote their components.
 Given two positive integers $s\le n$, we let $[n]=\{1,2,\cdots, n\}$, and let $[s,n]=\{s, s+1, \cdots, n\}$. We let $\bm I_n$ be the $n\times n$
identity matrix. 
For a vector $\bm x \in \Re^n$,  let $\Diag(\bm x)$ denote a diagonal matrix with diagonal entries being $\bm x$ and let $\|\bm x\|_2$ denote its two norm. If $\bm x \in \Re_+^n$ is nonnegative,  we define $\sqrt{\bm x}$ as the vector where each entry is the square root of the corresponding element in $\bm x$.
For a  matrix $\bm X \in \Re^{d\times n}$, 
we let $\|\bm X\|_F$ denote its Frobenius norm, let $\bm X_i \in \Re^d$ denote its $i$-th column vector for each $i\in [n]$, and for a subset $S\subseteq[n]$, let $\bm X_S$ denote the submatrix consisting of the columns of $\bm X$ indexed by $S$.  For a symmetric matrix $\bm X\in \S^n_+$, 
let $\tr(\bm X)$ denote its trace, and for a subset $S\subseteq[n]$, we let $\bm X_{S,S}$ denote a principal submatrix of $\bm X$ indexed by $S$.

\section{Complexity Analysis}\label{sec:np}
We begin by showing the equivalence between \ref{aed} and AMESP with an additional constant of ${(d-k)}/{\lambda}$.
\Cref{prop:amesp}  extends \citet[Theorems 1 and 2]{li2024d}, where they demonstrated the equivalence between Bayesian DOD and MESP. Our \Cref{prop:amesp}   further connects experimental design problems with maximum entropy-based optimization.

 For brevity,  we define the objective function of \ref{aed} as 
 $$f(S):=\tr \bigg(\bigg(\sum_{i\in S} \bm a_i\bm a_i^{\top}+ \lambda \bm I_d\bigg)^{-1} \bigg)$$ for any subset $S\subseteq [n]$ throughout.

\begin{proposition}\label{prop:amesp}
  \ref{aed} can be converted into the following:
   \begin{align}\label{amesp}
z_k=\min_{S\subseteq [n] } \left\{ \tr \left(\left(\bm C_{S,S} \right)^{-1}\right) + \frac{d-k}{\lambda}: |S|=k \right\}, \tag{AMESP}
\end{align}
where $\bm C \in \S_+^n := \bm A^{\top}\bm A +\lambda \bm I_n$, and $\bm A\in \Re^{d\times n}$ is a matrix whose columns  are vectors $\{\bm a_i\}_{i\in [n]}$.
\end{proposition}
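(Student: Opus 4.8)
The plan is to reduce the claimed equivalence to a single per-subset identity and then minimize over $S$. Fix any $S\subseteq[n]$ with $|S|=k$ and let $\bm A_S\in\Re^{d\times k}$ be the submatrix of $\bm A$ whose columns are indexed by $S$. Then $\sum_{i\in S}\bm a_i\bm a_i^\top=\bm A_S\bm A_S^\top$, so the RAOD objective is $f(S)=\tr\big((\bm A_S\bm A_S^\top+\lambda\bm I_d)^{-1}\big)$, while the principal submatrix of $\bm C=\bm A^\top\bm A+\lambda\bm I_n$ is $\bm C_{S,S}=\bm A_S^\top\bm A_S+\lambda\bm I_k$. Because $\lambda>0$, both $\bm A_S\bm A_S^\top+\lambda\bm I_d\in\S_{++}^d$ and $\bm C_{S,S}\in\S_{++}^k$, so the two inverses exist regardless of the rank of $\bm A_S$. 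It therefore suffices to prove the identity
\[
\tr\big((\bm A_S\bm A_S^\top+\lambda\bm I_d)^{-1}\big)=\tr\big((\bm A_S^\top\bm A_S+\lambda\bm I_k)^{-1}\big)+\frac{d-k}{\lambda},
\]
after which taking the minimum over all $S$ with $|S|=k$ yields the stated equivalence $z_k=\min_{|S|=k}\{\tr((\bm C_{S,S})^{-1})+(d-k)/\lambda\}$.

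I would establish this identity through the shared spectrum of the two Gram matrices. Let $r=\rank(\bm A_S)\le\min\{d,k\}$ and let $\sigma_1^2,\dots,\sigma_r^2>0$ denote the nonzero eigenvalues of $\bm A_S\bm A_S^\top$; by the singular value decomposition of $\bm A_S$, these are exactly the nonzero eigenvalues of $\bm A_S^\top\bm A_S$ as well. Consequently $\bm A_S\bm A_S^\top$ has eigenvalues $\sigma_1^2,\dots,\sigma_r^2$ together with the eigenvalue $0$ of multiplicity $d-r$, whereas $\bm A_S^\top\bm A_S$ has the same nonzero eigenvalues but the eigenvalue $0$ of multiplicity $k-r$. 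Adding $\lambda\bm I$ and inverting, the two traces become $\sum_{j=1}^r(\sigma_j^2+\lambda)^{-1}+(d-r)/\lambda$ and $\sum_{j=1}^r(\sigma_j^2+\lambda)^{-1}+(k-r)/\lambda$, respectively; subtracting gives precisely $(d-r)/\lambda-(k-r)/\lambda=(d-k)/\lambda$, as required.

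An alternative and purely algebraic route avoids the eigenvalue bookkeeping via the Woodbury identity: writing $\bm M=\bm A_S^\top\bm A_S$, one has $(\lambda\bm I_d+\bm A_S\bm A_S^\top)^{-1}=\lambda^{-1}\bm I_d-\lambda^{-1}\bm A_S(\lambda\bm I_k+\bm M)^{-1}\bm A_S^\top$, and taking traces with the cyclic property together with the substitution $\bm M=(\lambda\bm I_k+\bm M)-\lambda\bm I_k$ reduces the cross term $\tr((\lambda\bm I_k+\bm M)^{-1}\bm M)$ to $k-\lambda\tr((\lambda\bm I_k+\bm M)^{-1})$, again producing the constant $(d-k)/\lambda$. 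Either derivation closes the proof.

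The computations here are routine, so there is no deep obstacle; the one point that needs care is the bookkeeping of the zero eigenvalues. The matrices $\bm A_S\bm A_S^\top$ and $\bm A_S^\top\bm A_S$ have different sizes ($d$ versus $k$) and hence different multiplicities of the eigenvalue $0$, and it is exactly this mismatch ($d-r$ versus $k-r$) that generates the additive constant $(d-k)/\lambda$. Keeping track of this, rather than merely asserting that the two Gram matrices are ``the same up to zeros'', is the only step where an error could plausibly creep in.
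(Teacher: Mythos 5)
Your proposal is correct and follows essentially the same route as the paper: both arguments rest on the fact that $\bm A_S\bm A_S^{\top}$ and $\bm A_S^{\top}\bm A_S$ share their nonzero eigenvalues, with the additive constant $(d-k)/\lambda$ arising from the differing multiplicities of the zero eigenvalue. Your uniform bookkeeping via the rank $r$ is marginally cleaner than the paper's split into the cases $k\le d$ and $k>d$, and your alternative Woodbury derivation mirrors the technique the paper uses elsewhere (in the proof of its SOCP representability lemma), but these are presentational rather than substantive differences.
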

\begin{proof}{Proof.}
    See Appendix~\ref{proof:amesp}. \qed
\end{proof}

It is well known that the independent set decision problem is NP-hard \citep[Chapter 34]{cormen2022introduction}. Next, we prove that the independent set decision problem can be reduced to \ref{amesp}, which, combined with \Cref{prop:amesp},  establishes the NP-hardness of \ref{aed}.

\begin{definition}[Independent sets and the independent set decision problem]
In an undirected graph $G$, an independent set $S$ is a set of vertices such that no two vertices in $S$ are adjacent. Given an integer $k$,  the independent set decision problem  is to determine  if the graph $G$ contains an independent set of size $k$.
\end{definition}

Let us introduce a known inequality based on the comparison between the harmonic mean and arithmetic mean, which is a key technical result used throughout the paper.
\begin{lemma}[\citealt{sedrakyan2018algebraic}] \label{ineq:amhm}
Suppose $\bm \sigma\in \Re_{++}^n$. Then, the following holds
 \[\sum_{i\in [n]} \frac{1}{\sigma_i} \ge \frac{n^2}{\sum_{i\in [n]}\sigma_i},\]
which becomes an equality if and only if all entries of $\bm \sigma$ are equal.
\end{lemma}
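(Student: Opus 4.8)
The plan is to derive this as an immediate consequence of the Cauchy--Schwarz inequality in its Engel (sum-of-fractions) form. Since $\bm\sigma\in\Re_{++}^n$, every entry is strictly positive, so I may legitimately write $\sigma_i=(\sqrt{\sigma_i})^2$ and $1/\sigma_i=(1/\sqrt{\sigma_i})^2$. Applying Cauchy--Schwarz to the vectors $(1/\sqrt{\sigma_i})_{i\in[n]}$ and $(\sqrt{\sigma_i})_{i\in[n]}$ yields
\[
\left(\sum_{i\in[n]}\frac{1}{\sigma_i}\right)\left(\sum_{i\in[n]}\sigma_i\right)\ge\left(\sum_{i\in[n]}\frac{1}{\sqrt{\sigma_i}}\cdot\sqrt{\sigma_i}\right)^2=\left(\sum_{i\in[n]}1\right)^2=n^2.
\]
Dividing both sides by the strictly positive quantity $\sum_{i\in[n]}\sigma_i$ then delivers the stated lower bound.

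For the equality characterization I would invoke the equality condition of Cauchy--Schwarz: equality holds precisely when the two vectors $(1/\sqrt{\sigma_i})_{i\in[n]}$ and $(\sqrt{\sigma_i})_{i\in[n]}$ are linearly dependent, i.e., when there exists a scalar $c$ with $1/\sqrt{\sigma_i}=c\sqrt{\sigma_i}$ for every $i\in[n]$. This forces $\sigma_i=1/c$ to be the same constant across all indices, and conversely a direct substitution shows that any constant $\bm\sigma$ attains equality. This establishes the \emph{if and only if} claim.

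An equally clean alternative is to argue through convexity: since $g(t)=1/t$ is strictly convex on $(0,\infty)$, Jensen's inequality gives $\tfrac{1}{n}\sum_{i\in[n]}g(\sigma_i)\ge g\!\left(\tfrac{1}{n}\sum_{i\in[n]}\sigma_i\right)$, which rearranges to exactly the desired inequality, with strict convexity supplying the same equality condition (equivalently, this is just the arithmetic mean--harmonic mean inequality). Either route is entirely elementary, so I do not anticipate a genuine obstacle. The only point that deserves a little care is the equality analysis: I would make sure the ``only if'' direction is deduced rigorously from the (strict) equality condition of the underlying inequality, rather than merely observing that equal entries are sufficient. Both the Cauchy--Schwarz and the Jensen formulations hand this over cleanly.
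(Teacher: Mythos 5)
Your proof is correct. The paper does not actually prove this lemma; it is quoted as a known result with a citation to \citet{sedrakyan2018algebraic}, so there is no in-paper argument to compare against. Your Cauchy--Schwarz derivation (and the Jensen/strict-convexity alternative) is the standard textbook route to the AM--HM inequality, and your handling of the equality case via linear dependence of $(1/\sqrt{\sigma_i})_i$ and $(\sqrt{\sigma_i})_i$ correctly delivers both directions of the \emph{if and only if}.
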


\begin{theorem}[NP-hardness] \label{them:hard}
\ref{aed} is NP-hard.
\end{theorem}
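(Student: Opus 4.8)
The plan is to exploit the equivalence established in \Cref{prop:amesp}: since \ref{aed} and \ref{amesp} share the same minimizers (they differ only by the additive constant $(d-k)/\lambda$), it suffices to reduce the independent set decision problem to \ref{amesp}. Given a graph $G$ on $n$ vertices with adjacency matrix $\bm B$ (symmetric, zero diagonal) and a target size $k$, I would construct an \ref{amesp} instance whose matrix $\bm C$ encodes $G$ directly. Concretely, I set $d=n$ and take $\bm C=(c+\lambda)\bm I_n+\bm B$ for a scalar $c$ large enough that $\bm C\succ 0$ (e.g. $c=n$, since every eigenvalue of $\bm B$ lies in $[-(n-1),n-1]$). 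Such a positive definite $\bm C$ is realizable as $\bm C=\bm A^{\top}\bm A+\lambda\bm I_n$ with $\bm A^{\top}\bm A=c\bm I_n+\bm B\succeq 0$, so it is a legitimate \ref{aed}/\ref{amesp} instance.

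The heart of the argument is the following dichotomy. For any $S\subseteq[n]$ with $|S|=k$, the principal submatrix is $\bm C_{S,S}=(c+\lambda)\bm I_k+\bm B_{S,S}$, which is positive definite. Because $\bm B$ has zero diagonal, the diagonal of $\bm C_{S,S}$ is the constant $c+\lambda$, so $\tr(\bm C_{S,S})=k(c+\lambda)$ for \emph{every} $S$ — the trace is independent of the chosen subset. Applying \Cref{ineq:amhm} to the positive eigenvalues $\bm\sigma$ of $\bm C_{S,S}$ yields
\[
\tr\big((\bm C_{S,S})^{-1}\big)=\sum_{i\in[k]}\frac{1}{\sigma_i}\ge \frac{k^2}{\sum_{i\in[k]}\sigma_i}=\frac{k}{c+\lambda},
\]
with equality if and only if all eigenvalues of $\bm C_{S,S}$ coincide, i.e. $\bm C_{S,S}=(c+\lambda)\bm I_k$. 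Since the diagonal already equals $c+\lambda$, this holds exactly when $\bm B_{S,S}=\bm 0$, that is, precisely when $S$ is an independent set of $G$.

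Combining these observations, $\min_{|S|=k}\tr((\bm C_{S,S})^{-1})$ equals $k/(c+\lambda)$ when $G$ contains an independent set of size $k$, and is strictly larger otherwise. Adding back the constant $(d-k)/\lambda$, the optimal value $z_k$ of the constructed instance attains the threshold $k/(c+\lambda)+(d-k)/\lambda$ if and only if $G$ has an independent set of size $k$. Hence any polynomial-time method for \ref{aed} decides the independent set problem, and NP-hardness of \ref{aed} follows via \Cref{prop:amesp}.

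I expect the main obstacle to lie in the equality-case analysis rather than in the construction: one must argue that equality in \Cref{ineq:amhm} forces $\bm C_{S,S}$ to be a scalar multiple of the identity, and then translate this spectral condition back into the combinatorial statement that $S$ is independent, all while keeping the trace constant across subsets so that the lower bound $k/(c+\lambda)$ is uniform. A secondary point to handle carefully is the representability of $\bm A$ (a symmetric square root of $c\bm I_n+\bm B$), which is cleanest if the reduction is phrased directly in terms of \ref{amesp} and then transferred to \ref{aed} through \Cref{prop:amesp}.
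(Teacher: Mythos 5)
Your proposal is correct and follows essentially the same route as the paper: both reduce independent set to \ref{amesp} via a matrix of the form (constant)$\cdot\bm I_n$ plus the adjacency pattern of $G$, observe that $\tr(\bm C_{S,S})$ is the same for every size-$k$ subset, and invoke the equality case of \Cref{ineq:amhm} to show the threshold value is attained exactly on independent sets. The only cosmetic difference is in certifying strictness for non-independent $S$ — the paper compares $\|\bm C_{S,S}\|_F^2$ against $kn^2$ to rule out equal eigenvalues, whereas you argue that equal eigenvalues of a symmetric matrix force $\bm C_{S,S}=(c+\lambda)\bm I_k$ and hence a zero off-diagonal; both are valid.
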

\begin{proof}{Proof.}
    See Appendix~\ref{proof:hard}. \qed
\end{proof}
\Cref{them:hard} confirms the  following conjecture from \cite{hendrych2023solving} regarding Bayesian AOD.
Since \ref{aed} is a special case of Bayesian AOD, the conjecture is clearly true.
\begin{conjecture} [\citealt{hendrych2023solving}]
The Bayesian AOD problem is  NP-hard.  
\end{conjecture}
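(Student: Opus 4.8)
The plan is to give a polynomial (Karp) reduction from the independent set decision problem to \ref{amesp}. Since \Cref{prop:amesp} shows that \ref{aed} and \ref{amesp} have the same optimal value up to the additive constant $(d-k)/\lambda$, a hardness result for \ref{amesp} transfers verbatim to \ref{aed}. Reducing to \ref{amesp} rather than directly to \ref{aed} is convenient because an \ref{amesp} instance is specified by its matrix $\bm C$, which I can make rational; I only need to check that $\bm C$ is a legitimate input, i.e. $\bm C-\lambda\bm I_n\succeq\bm 0$ so that an underlying $\bm A$ with $d\le n$ exists.

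Given a graph $G$ on vertex set $[n]$ with adjacency matrix $\bm M\in\{0,1\}^{n\times n}$ and a target size $k$, I would build the \ref{amesp} instance
\[
\bm C = \bm M + (n+\lambda)\bm I_n .
\]
The shift $n+\lambda$ is chosen so that $\bm C-\lambda\bm I_n = \bm M + n\bm I_n\succ\bm 0$: the spectral radius of a $0/1$ adjacency matrix is at most the maximum degree $n-1$, hence $\lambda_{\min}(\bm M + n\bm I_n)\ge 1>0$. Thus $\bm C-\lambda\bm I_n$ is positive definite of rank $n$, so it factors as $\bm A^\top\bm A$ with $\bm A\in\Re^{d\times n}$ and $d=n\le n$, confirming that $\bm C$ is a valid \ref{amesp} (equivalently \ref{aed}) input; the construction is clearly polynomial in the size of $G$.

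The crux is to show that minimizing $\tr\big((\bm C_{S,S})^{-1}\big)$ over size-$k$ subsets detects an independent set exactly. For every $S$ with $|S|=k$, the diagonal of $\bm M$ vanishes, so $\tr(\bm C_{S,S})=k(n+\lambda)$ regardless of $S$. Applying \Cref{ineq:amhm} to the eigenvalues $\bm\sigma$ of the positive definite matrix $\bm C_{S,S}$ yields
\[
\tr\big((\bm C_{S,S})^{-1}\big) = \sum_{i\in[k]}\frac{1}{\sigma_i} \ge \frac{k^2}{\sum_{i\in[k]}\sigma_i} = \frac{k^2}{\tr(\bm C_{S,S})} = \frac{k}{n+\lambda},
\]
with equality precisely when all $\sigma_i$ are equal. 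Since $\bm C_{S,S}=\bm M_{S,S}+(n+\lambda)\bm I_k$ with $\tr(\bm M_{S,S})=0$, the eigenvalues of $\bm C_{S,S}$ coincide exactly when the symmetric matrix $\bm M_{S,S}$ has all eigenvalues equal to $0$, i.e. $\bm M_{S,S}=\bm 0$, which holds if and only if $S$ induces no edge, that is, $S$ is independent. Consequently $\min_{|S|=k}\tr\big((\bm C_{S,S})^{-1}\big)$ attains the threshold $k/(n+\lambda)$ when $G$ has an independent set of size $k$ and is strictly larger otherwise; adding the fixed constant $(d-k)/\lambda=(n-k)/\lambda$, deciding whether $z_k\le k/(n+\lambda)+(n-k)/\lambda$ is equivalent to the independent set decision problem, establishing NP-hardness of \ref{amesp} and hence of \ref{aed}.

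I expect the main obstacle to be the ``only if'' direction — certifying that every \emph{non}-independent $S$ is strictly above the threshold. This hinges entirely on the equality case of \Cref{ineq:amhm} combined with the elementary but essential fact that a traceless symmetric matrix whose eigenvalues are all equal must be the zero matrix. A secondary technical point is ensuring the constructed $\bm C$ is a genuine \ref{aed} instance (positive definite with $d\le n$), which the choice of diagonal shift resolves cleanly.
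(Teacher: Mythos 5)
Your proposal is correct and follows essentially the same route as the paper: a polynomial reduction from the independent set decision problem to \ref{amesp} via a shifted adjacency-type matrix, with \Cref{ineq:amhm} and its equality case separating independent from non-independent size-$k$ subsets (your only cosmetic difference is certifying strictness by observing that a traceless symmetric $\bm M_{S,S}$ with all eigenvalues equal must vanish, whereas the paper compares $\|\bm C_{S,S}\|_F^2 \ge kn^2+2$ against $\tr(\bm C_{S,S})=kn$). The one step you leave implicit is the final sentence the paper states explicitly: NP-hardness of \ref{aed} transfers to Bayesian AOD because the latter contains the former as the special case of a scaled-identity prior matrix.
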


\section{Optimality Gaps of Convex Relaxations}\label{sec:Method}
This section presents two widely-used (mixed-)integer convex formulations of \ref{aed} and derives the optimality gaps for their continuous relaxations.
Leveraging \Cref{prop:amesp}, we further analyze the quality of a convex relaxation  for \ref{amesp}.

\subsection{The Conventional Convex Integer Programming Formulation and its Convex Relaxation}\label{subsec:convention}
By introducing a binary characterization of the subset $S\subseteq [n]$, where $x_i=1$ if $i\in S$ and $x_i=0$ otherwise, we can readily obtain a convex integer nonlinear formulation of \ref{aed}
\begin{align}\label{aed1}
z_k=\min_{\bm x \in\{0,1\}^n } \bigg\{ \tr \bigg( \bigg(\sum_{i\in [n]} x_i \bm a_i\bm a_i^{\top}+ \lambda \bm I_d\bigg)^{-1} \bigg): \sum_{i\in [n]} x_i=k \bigg\}, 
\end{align}
and its continuous relaxation  provides a practical lower bound \citep{tantipongpipat2020lambda}:
\begin{align}\label{aed-R1}
z_k \ge {\nu}_k^{\text{C}}:=\min_{\bm x \in[0,1]^n}\bigg\{  \tr \bigg(\bigg(\sum_{i\in [n]} x_i \bm a_i\bm a_i^{\top}+ \lambda \bm I_d\bigg)^{-1} \bigg): \sum_{i\in [n]} x_i=k\bigg\}. \tag{RAOD-RI}
\end{align}
It is evident that ${\nu}_k^{\text{C}}$ is always positive.

The problem \eqref{aed1} can be recast as a MISDP by nature.
The conventional relaxation \ref{aed-R1} has been widely incorporated into the B\&B algorithms for solving \eqref{aed1} to optimality (e.g., \citealt{hendrych2023solving}). For B\&B algorithms, the quality of the relaxation bound used is crucial for pruning the search space. We are thus motivated to explore the quality of \ref{aed-R1}. 
Before that,  let us explore the properties of the objective function $f(S)$ in \ref{aed}. 
First, the function $f(S)$ decreases monotonically with respect to $S$, that is, $f(S\setminus\{i\})\ge f(S)$ for any $i\in S$. We also remark that
\begin{remark}\label{remark:mono}
The optimal value $z_k$ decreases as the selection size  $k$ increases. \qedA
\end{remark}
 
\Cref{lem:techniques}  establishes an upper bound for the smallest value of $f(S\setminus\{i\})$ over  $i\in S$. Our derivation leverages eigen-decomposition techniques and the sampling probability of removing an element from $S$, as introduced by \cite{derezinski2017subsampling}. In \cite{derezinski2017subsampling}, this probability led to a regularized volume sampling algorithm with a theoretical guarantee for \ref{aed} only when $k\in [d, n]$. A key difference from \citet[Lemma 5]{derezinski2017subsampling} is that  \Cref{lem:techniques} provides a general treatment that applies to any subset $S$, without restricting its size at least $d$. Based on \Cref{lem:techniques}, we derive the optimality gap of \ref{aed-R1} for $k\in[d,n]$, as shown in \Cref{cor:gap}, and analyze the performance guarantees of our approximation algorithms  in \Cref{sec:approx}.

\begin{lemma}\label{lem:techniques}
For a nonempty subset $S\subseteq [n]$ of size $s\ge 1$, we have that
\[ \min_{i\in S} f(S\setminus\{i\}) \le\begin{cases}
\left(1+\frac{1}{s\rho+d-s}\right)
     f(S), & \text{if } s\in [d-1];\\
\left(1+\frac{1}{d\rho+s-d}\right) f(S), &  \text{if } s\in [d,n].
\end{cases},
 \]
where $\rho\in(0,1):=\frac{\lambda}{\lambda+ \max\{1, s/d\} \max_{i\in [n]}\|\bm a_{i}\|_2^2 }$.
\end{lemma}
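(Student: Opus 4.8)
The plan is to bound the minimum by a weighted average: for any probability vector $\bm p$ supported on $S$ we have $\min_{i\in S} f(S\setminus\{i\}) \le \sum_{i\in S} p_i\, f(S\setminus\{i\})$, and then to choose $\bm p$ so that the right-hand side can be estimated sharply. Write $\bm M_S := \sum_{i\in S}\bm a_i\bm a_i^\top + \lambda\bm I_d \in \S_{++}^d$ and let $\tau_i := \bm a_i^\top \bm M_S^{-1}\bm a_i$ be the regularized leverage score; since $\lambda>0$ forces $\bm M_S - \bm a_i\bm a_i^\top \succ 0$, one gets $\tau_i \in (0,1)$. A Sherman--Morrison rank-one downdate then gives the exact identity
\[ f(S\setminus\{i\}) = \tr\big((\bm M_S - \bm a_i\bm a_i^\top)^{-1}\big) = f(S) + \frac{\bm a_i^\top \bm M_S^{-2}\bm a_i}{1-\tau_i}, \]
and the natural choice $p_i \propto (1-\tau_i)$ will cancel the awkward denominator.

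First I would record two trace identities from $\sum_{i\in S}\bm a_i\bm a_i^\top = \bm M_S - \lambda\bm I_d$: summing leverage scores yields $\sum_{i\in S}(1-\tau_i) = s - d + \lambda f(S)$, and $\sum_{i\in S}\bm a_i^\top\bm M_S^{-2}\bm a_i = f(S) - \lambda\tr(\bm M_S^{-2})$. Note the normalizer is positive, since at least $d-s$ eigenvalues of $\bm M_S$ equal $\lambda$ when $s<d$ (forcing $\lambda f(S) > d-s$), and trivially when $s\ge d$. Taking $p_i = (1-\tau_i)/(s-d+\lambda f(S))$ produces the clean, $\rho$-free master bound
\[ \min_{i\in S} f(S\setminus\{i\}) \le f(S) + \frac{f(S) - \lambda\tr(\bm M_S^{-2})}{s-d+\lambda f(S)}. \]
It then remains to show the surplus term is at most $f(S)/(s\rho+d-s)$ or $f(S)/(d\rho+s-d)$ in the two regimes; throughout I would pass to the eigenvalues $\mu_1,\dots,\mu_d \ge \lambda$ of $\bm M_S$, so that $f(S)=\sum_j \mu_j^{-1}$ and $\tr(\bm M_S^{-2})=\sum_j \mu_j^{-2}$.

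For $s \ge d$, clearing denominators reduces the claim to $d\rho\, f(S)/\lambda \le f(S)^2 + (d\rho+s-d)\tr(\bm M_S^{-2})$. Lower-bounding $\tr(\bm M_S^{-2}) \ge f(S)^2/d$ by Cauchy--Schwarz collapses the right-hand side to $f(S)^2(\rho + s/d)$, and since $\rho + s/d \ge 1$ it suffices to prove the scalar inequality $f(S) \ge d\rho/\lambda$. This follows from \Cref{ineq:amhm} applied to $\mu_1,\dots,\mu_d$ together with $\tr(\bm M_S) = \sum_{i\in S}\|\bm a_i\|_2^2 + d\lambda \le s\max_{i}\|\bm a_i\|_2^2 + d\lambda = d\lambda/\rho$, where the last equality is exactly the definition of $\rho$ in this regime.

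The case $s \in [d-1]$ is the delicate one, and is where I expect the real work. The naive route---bounding $1-\tau_i \ge \rho$ and using uniform weights---is too lossy, because $1-\tau_i$ can be far larger than $\rho$ (the latter depends on $\max_{i\in[n]}\|\bm a_i\|_2$, possibly attained outside $S$), so I would keep the master bound above. The key is to exploit that at least $d-s$ eigenvalues of $\bm M_S$ equal $\lambda$: I split the spectrum into these $d-s$ ``frozen'' eigenvalues and the remaining $s$ eigenvalues, and let $\tilde\alpha,\tilde\beta$ be the partial sums of $\mu_j^{-1},\mu_j^{-2}$ over the latter. After substituting $f(S)=\tfrac{d-s}{\lambda}+\tilde\alpha$, $\tr(\bm M_S^{-2})=\tfrac{d-s}{\lambda^2}+\tilde\beta$ and cancelling the frozen contributions, the desired inequality reduces to $(s\rho+d-s)\lambda\tilde\beta + \lambda\tilde\alpha^2 \ge s\rho\,\tilde\alpha$. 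Applying Cauchy--Schwarz as $\tilde\beta \ge \tilde\alpha^2/s$ and then \Cref{ineq:amhm} over the $s$ remaining eigenvalues---using $\sum_{\text{rem}}\mu_j = \tr(\bm M_S) - (d-s)\lambda = \sum_{i\in S}\|\bm a_i\|_2^2 + s\lambda \le s\big(\max_{i}\|\bm a_i\|_2^2 + \lambda\big) = s\lambda/\rho$---gives $\tilde\alpha \ge s\rho/\lambda$, which is precisely what finishes the argument. The main obstacle is organizing these spectral estimates so that $\rho$ enters with the correct $\max\{1,s/d\}$ scaling in each regime, and carefully justifying the denominator positivity that legitimizes the weighted-average step.
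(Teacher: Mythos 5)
Your proposal is correct and follows essentially the same route as the paper's proof: the weights $p_i\propto 1-\tau_i$ are exactly the regularized volume-sampling probabilities $\P(i|S)$ used there, and the Sherman--Morrison downdate, the resulting master bound, the split of the spectrum of $\bm M_S$ into the $d-s$ eigenvalues frozen at $\lambda$ versus the rest, and the AM--HM estimate via $\tr(\bm M_S)\le s\max_i\|\bm a_i\|_2^2+d\lambda$ all coincide with the paper's argument. The only difference is cosmetic bookkeeping in the last step: the paper bounds the surplus ratio directly using $\sigma_i/(\lambda+\sigma_i)^2\le 1/(\lambda+\sigma_i)$, while you clear denominators and insert a Cauchy--Schwarz step that is in fact dispensable (dropping the $\tilde\beta$ and $\tr(\bm M_S^{-2})$ terms already yields the required scalar inequalities $\lambda\tilde\alpha\ge s\rho$ and $\lambda f(S)\ge d\rho$).
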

\begin{proof}{Proof.}
    See Appendix \ref{proof:lem_tech}. \qed
\end{proof}

\begin{theorem}[Optimality gap]
\label{cor:gap}
The following hold for the lower bound ${\nu}_k^{\text{C}}$ in \ref{aed-R1}:
\begin{enumerate}[(i)]
\item For any $k\in [d,n]$, we have
$$
1\le \frac{z_k}{{\nu}_k^{\text{C}}} \le \frac{n-d+1}{k-d+1};
$$
and
\item For $k\in [d-1]$, there is a case of \ref{aed-R1} where ${z_k}/{{\nu}_k^{\text{C}}}\to \infty$ as $\lambda\to 0$. 
\end{enumerate}  
\end{theorem}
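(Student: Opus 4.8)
The lower bound $z_k/\nu_k^{\text{C}}\ge 1$ is immediate and I would dispense with it first: each size-$k$ subset $S$ furnishes the feasible point $\bm x=\sum_{i\in S}\bm e_i$ of \ref{aed-R1}, so $\nu_k^{\text{C}}\le z_k$. For the upper bound in part~(i), my plan is to sandwich both $z_k$ and $\nu_k^{\text{C}}$ by the full-selection value $z_n=f([n])=\tr\big((\bm A\bm A^{\top}+\lambda\bm I_d)^{-1}\big)$, so that this common quantity cancels in the ratio.

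First I would lower-bound the relaxation. For any feasible $\bm x$, the constraints $0\le x_i\le 1$ give $\sum_{i\in[n]}x_i\bm a_i\bm a_i^{\top}\preceq\bm A\bm A^{\top}$, and since the matrix inverse is order-reversing on $\S_{++}^d$ we get $\big(\sum_i x_i\bm a_i\bm a_i^{\top}+\lambda\bm I_d\big)^{-1}\succeq(\bm A\bm A^{\top}+\lambda\bm I_d)^{-1}$; taking traces yields $\nu_k^{\text{C}}\ge z_n$. Next I would bound $z_k$ from above by iterating \Cref{lem:techniques}. Set $S_n:=[n]$ and, for $s=n,n-1,\dots,k+1$, let $S_{s-1}:=S_s\setminus\{i^\ast\}$ with $i^\ast\in\arg\min_{i\in S_s}f(S_s\setminus\{i\})$. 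Because $k\ge d$, every such $s$ obeys $s\ge d+1$, so only the second case of the lemma is ever invoked, and since $\rho>0$ each factor satisfies $1+\tfrac{1}{d\rho+s-d}\le 1+\tfrac{1}{s-d}=\tfrac{s-d+1}{s-d}$. The resulting product telescopes, giving
\[
z_k\le f(S_k)\le\Bigg(\prod_{s=k+1}^{n}\frac{s-d+1}{s-d}\Bigg)z_n=\frac{n-d+1}{k-d+1}\,z_n .
\]
Combining this with $\nu_k^{\text{C}}\ge z_n$ yields $z_k/\nu_k^{\text{C}}\le z_k/z_n\le\frac{n-d+1}{k-d+1}$, which proves part~(i).

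For part~(ii), I would exhibit an explicit instance. Take $n=d$ and $\bm a_i=\bm e_i$, so that $\sum_{i\in S}\bm a_i\bm a_i^{\top}=\Diag(\mathbf 1_S)$. Every size-$k$ subset then produces $k$ diagonal entries equal to $1+\lambda$ and $d-k$ equal to $\lambda$, whence $z_k=\tfrac{k}{1+\lambda}+\tfrac{d-k}{\lambda}\ge\tfrac{d-k}{\lambda}$. On the relaxation side, the uniform vector $x_i=k/d$ is feasible (here $1\le k\le d-1$ forces $k/d\in(0,1)$) and gives $\nu_k^{\text{C}}\le\sum_{i\in[d]}\tfrac{1}{k/d+\lambda}=\tfrac{d^2}{k+d\lambda}\le\tfrac{d^2}{k}$. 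Therefore $z_k/\nu_k^{\text{C}}\ge\tfrac{k(d-k)}{d^2\lambda}\to\infty$ as $\lambda\to0$, since $d-k\ge 1$.

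The technical heart is the peeling estimate in part~(i): recognizing that after the harmless bound $d\rho>0$ the data-dependent factors of \Cref{lem:techniques} collapse into exactly the volume-sampling ratio $\frac{n-d+1}{k-d+1}$, and that pairing this with the order-reversing lower bound $\nu_k^{\text{C}}\ge z_n$ cancels the common factor $z_n$. Part~(ii) is then routine once one observes that a $k$-subset with $k<d$ necessarily leaves $d-k$ eigen-directions uncovered, each contributing $\approx 1/\lambda$ to $z_k$, whereas the fractional solution can spread its mass to keep the information matrix full rank and thus keep $\nu_k^{\text{C}}$ bounded as $\lambda\to0$.
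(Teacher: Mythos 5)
Your proof is correct and follows essentially the same route as the paper's: part (i) telescopes \Cref{lem:techniques} from $s=n$ down to $s=k+1$ against the bound $\nu_k^{\text{C}}\ge z_n$ (which the paper merely asserts and you justify via Loewner-order monotonicity of the inverse), and part (ii) uses the identical worst-case instance $n=d$, $\bm a_i=\bm e_i$ with the uniform fractional point. The only cosmetic difference is that you peel greedily from $[n]$ while the paper chains the optimal values $z_s$; both yield the same product.
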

\begin{proof}{Proof.}
    See Appendix \ref{proof:gap}. \qed
\end{proof}

 \Cref{cor:gap} shows that \ref{aed-R1} provides a lower bound with a bounded data-independent optimality gap for $k\in [d,n]$. It is evident that this gap decreases in $k$ but increases in $d$ and $n$. Note that the gap can be further tightened at $k=d$, as shown in \Cref{them:relgap}.
 However, for small values of $k\in [d-1]$, \ref{aed-R1} may lead to an infinite optimality gap.

\subsection{A Convex Relaxation of AMESP}\label{subsec:mesp}
Based on the equivalence between \ref{aed} and \ref{amesp}, this subsection presents a convex relaxation of \ref{amesp} proposed by \cite{li2024best}. We establish a novel optimality gap of \ref{amesp-R} for $k\in [d]$ using \Cref{ineq:amhm}. However, when $k\in [d+1,n]$, we demonstrate that \ref{amesp-R} may yield a negative optimal value due to the negativity of the constant $(d-k)/\lambda$.

\begin{definition}[\citealt{li2024best}]\label{def:phi}
	For a matrix $\bm Y\in \S_+^n$ with eigenvalues $\beta_1 \ge \cdots \ge \beta_n \ge 0$,  we define a function $\Phi: \S_+^n\to \Re$ as
	\[\Phi(\bm Y) := \phi(\bm \beta)=  \sum_{i\in [\iota]} \frac{1}{\beta_i} + \frac{(k-\iota)^2}{\sum_{i\in [\iota+1, n]} \beta_i},\]
	where $0\le \iota \le k-1$ is a unique integer such that  $\beta_{\iota}>\frac{1}{k-\iota}\sum_{i\in[\iota+1,n]} \beta_i\ge \beta_{\iota+1} $ with $\beta_0=\infty$.
\end{definition}
Note that $\Phi(\bm Y)$ is a spectral function that relies on only the eigenvalues of $\bm Y$. \cite{li2024best}
demonstrated its convexity and explicitly described its subgradient.

Recall that in \Cref{prop:amesp}, we define $\bm C = \bm A^{\top}\bm A+\lambda\bm I_n$.
Given that  $\bm C\in \S_{++}^n$ is  positive definite,
we compute its Cholesky factorization $\bm C=\bm H^{\top}\bm H $, where $\bm H \in \Re^{n\times n}$ and $\bm h_i\in \Re^n$ represents the $i$-th column vector of $\bm H$ for all $i\in [n]$. According to \citet[Theorem 9]{li2024best}, the following formulation based on the function $\Phi$ is a convex relaxation of 
\ref{amesp}.
\begin{align}\label{amesp-R}
z_k \ge {\nu}^{\text{M}}_k:=\min_{\bm x \in [0,1]^n } \bigg\{ \Phi \bigg(\sum_{i\in [n]} x_i \bm h_i\bm h_i^{\top} \bigg) + \frac{d-k}{\lambda}: \sum_{i\in [n]} x_i=k \bigg\}. \tag{AMESP-R}
\end{align}
It is important to note that  \ref{amesp-R} is exact when $\bm x$ is binary.

Next, we present our main results about \ref{amesp-R}.

\begin{theorem}[Optimality gap] \label{cor:amesp_gap}
The following hold for the lower bound ${\nu}^{\text{M}}_k$ in \ref{amesp-R}:
\begin{enumerate}[(i)]
\item For any $k\in [d]$, we have $$1\le \frac{z_k}{{\nu}^{\text{M}}_k} \le \begin{cases}
  \min\left\{\frac{d-1}{d-k},  n-k+1\right\} & \text{if } k\in [d-1];\\
  \min\left\{d, n-d+1\right\} & \text{if } k=d.
\end{cases};$$
and
\item If $k\ge d+1$, then there is a case of \ref{amesp-R} where ${\nu}^{\text{M}}_k<0$.
\end{enumerate}
\end{theorem}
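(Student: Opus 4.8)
The plan is to prove the two parts separately, and inside part (i) to first strip off the shared additive constant $(d-k)/\lambda$ and then bound a purely spectral gap.

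\textbf{Lower bound and reduction (part (i)).} First I would record the trivial direction: since \ref{amesp-R} is a valid relaxation of \ref{amesp} (Theorem~9 of \cite{li2024best}), we have $\nu_k^{\mathrm{M}}\le z_k$, and for $k\in[d]$ the constant $(d-k)/\lambda\ge 0$ together with $\Phi(\bm Y^*)>0$ gives $\nu_k^{\mathrm{M}}>0$; here $\bm Y^*:=\sum_i x_i^*\bm h_i\bm h_i^\top\neq\bm 0$ has a positive eigenvalue and, because $\iota\le k-1$, the pooled term $(k-\iota)^2/\sum_{i>\iota}\beta_i$ is strictly positive. Hence $z_k/\nu_k^{\mathrm{M}}\ge 1$. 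For the upper bound I would let $S^*$ attain $z_k$, write $z_k=g_k+(d-k)/\lambda$ and $\nu_k^{\mathrm{M}}=\Phi(\bm Y^*)+(d-k)/\lambda$ with $g_k:=\tr((\bm C_{S^*,S^*})^{-1})$, and observe that evaluating $\Phi$ at the incidence vector of $S^*$ yields $\Phi(\sum_{i\in S^*}\bm h_i\bm h_i^\top)=g_k$, so $\Phi(\bm Y^*)\le g_k$. The elementary inequality $\tfrac{c_1+c}{c_2+c}\le\tfrac{c_1}{c_2}$ (for $c\ge 0$, $c_1\ge c_2>0$) then reduces the claim to bounding the spectral gap $g_k/\Phi(\bm Y^*)$ by $\min\{(d-1)/(d-k),\,n-k+1\}$ for $k\le d-1$ and by $\min\{d,\,n-d+1\}$ at $k=d$.

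\textbf{The spectral gap.} Here the tools are \Cref{ineq:amhm} and Cauchy eigenvalue interlacing. Writing the eigenvalues of $\bm C_{S^*,S^*}$ as $\mu_1\ge\cdots\ge\mu_k\ge\lambda$, interlacing gives $\mu_j\ge\lambda_{j+n-k}(\bm C)$, and since the bottom $n-d$ eigenvalues of $\bm C=\bm A^\top\bm A+\lambda\bm I_n$ all equal $\lambda$, the reciprocal sum $g_k=\sum_j 1/\mu_j$ is governed by this $\lambda$-plateau. On the relaxation side I would use the threshold form $\Phi(\bm Y^*)=\sum_{i\le\iota}1/\beta_i+(k-\iota)^2/\sum_{i>\iota}\beta_i$, in which \Cref{ineq:amhm} relates the pooled tail term to the individual reciprocals $\sum_{i>\iota}1/\beta_i$ and thereby quantifies the slack between the relaxation and the integral objective. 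Trading the refined individual terms against the pooled term in two different ways, and invoking the plateau structure $\lambda_{d+1}=\cdots=\lambda_n=\lambda$, yields the two competing estimates $(d-1)/(d-k)$ and $n-k+1$; the case $k=d$ is the boundary, where $(d-1)/(d-k)$ is superseded by the finite bound $d$. Taking the smaller of the two gives the stated result.

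\textbf{Main obstacle.} I expect the crux to be producing \emph{data-independent} constants. The naive monotonicity bound $\Phi(\bm Y^*)\ge\Phi(\bm C)$ (from $\bm Y^*\preceq\bm C$) discards the cardinality budget $\sum_i x_i^*=k$ and is far too weak: e.g.\ when all $\bm a_i$ coincide with large norm it sends this lower bound to $0$ while $\nu_k^{\mathrm{M}}$ stays bounded away from $0$. The crude pooled bound $\Phi(\bm Y^*)\ge k^2/\tr(\bm Y^*)$ and the interlacing upper bound on $g_k$ are each individually too lossy as well. The difficulty is therefore to keep the budget in play while simultaneously exploiting the threshold index $\iota$, which itself depends on the data; correctly locating $\iota$ relative to the $\lambda$-plateau of the spectrum is the technical heart of the argument.

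\textbf{Part (ii).} For $k\ge d+1$ I would exhibit an explicit instance. Take $\bm a_i=\sqrt{t}\,\bm e_i$ for $i\in[d]$ and $\bm a_i=\bm 0$ for $i\in[d+1,n]$, so that $\bm C=\Diag(t+\lambda,\ldots,t+\lambda,\lambda,\ldots,\lambda)$ with $d$ copies of $t+\lambda$. Evaluating the relaxation at the feasible uniform point $\bm x=(k/n)\bm 1$ gives $\bm Y=(k/n)\bm C$, whose eigenvalues are $(k/n)(t+\lambda)$ (multiplicity $d$) and $(k/n)\lambda$ (multiplicity $n-d$). Since $k\ge d+1$, the threshold condition holds at $\iota=d$, so
\[
\nu_k^{\mathrm{M}}\ \le\ \frac{dn}{k(t+\lambda)}+\frac{(k-d)^2 n}{(n-d)k\lambda}+\frac{d-k}{\lambda}.
\]
Letting $t\to\infty$, the first term vanishes and the right-hand side tends to $\tfrac{d(k-d)(k-n)}{\lambda(n-d)k}$, which is strictly negative whenever $d+1\le k\le n-1$ (the case $k=n$ being degenerate, as then $[n]$ is the unique feasible subset). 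This produces an instance with $\nu_k^{\mathrm{M}}<0$, completing part (ii).
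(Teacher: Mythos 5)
Part (ii) of your proposal is correct and in fact more general than the paper's argument: the paper exhibits a single instance ($d=1$, $k=2$, $n=3$, $\lambda=1$) and evaluates $\Phi$ at the uniform feasible point $\bm x=(k/n)\bm 1$, exactly as you do; your one-parameter family with $t\to\infty$ covers every $d+1\le k\le n-1$ and the computation (eigenvalues of $(k/n)\bm C$, the threshold $\iota=d$, the limit $\tfrac{d(k-d)(k-n)}{\lambda(n-d)k}<0$) checks out.

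Part (i), however, has a genuine gap: the bound $(d-1)/(d-k)$ is never derived. Your "spectral gap" paragraph announces that interlacing plus "trading the refined individual terms against the pooled term in two different ways" yields the two estimates, but no such derivation is given, and your "Main obstacle" paragraph concedes that the technical heart (keeping the cardinality budget while locating $\iota$ relative to the $\lambda$-plateau) is unresolved. Worse, your preliminary reduction points away from the actual argument. By stripping the common constant via $\tfrac{c_1+c}{c_2+c}\le\tfrac{c_1}{c_2}$ you reduce to the strictly stronger claim $g_k/\Phi(\bm Y^*)\le (d-1)/(d-k)$, thereby discarding precisely the structure the paper exploits: in the paper's proof the ratio $(d-1)/(d-k)$ comes almost entirely from the mismatched additive constants. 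Concretely, the paper (a) upper-bounds $z_k\le z_1=\tfrac{1}{\max_i\|\bm a_i\|_2^2+\lambda}+\tfrac{d-1}{\lambda}$ using the monotonicity of $z_k$ in $k$ (\Cref{remark:mono}) — note the constant is $(d-1)/\lambda$, not $(d-k)/\lambda$; (b) lower-bounds $\nu_k^{\text{M}}\ge \tfrac{k}{\max_i\|\bm a_i\|_2^2+\lambda}+\tfrac{d-k}{\lambda}$ by applying \Cref{ineq:amhm} to $\Phi$ together with the trace identity $\sum_i\beta_i=\sum_i\hat x_i\|\bm a_i\|_2^2+k\lambda$; and (c) observes that the ratio of these two expressions collapses to $(d-1)/(d-k)$ after the cross terms cancel. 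No interlacing and no analysis of $\iota$ at the optimum are needed. You also leave the $n-k+1$ bound and the $k=d$ case unaddressed; the paper imports the $\min\{k,n-k+1\}$ ratio from \citet[Theorem 11]{li2024best}, transfers it using the nonnegativity of $(d-k)/\lambda$ for $k\le d$, and then discards the $k$ term via $k\ge (d-1)/(d-k)$. To repair your proof you would need to replace the entire spectral-gap section with an argument of this type (or an actual execution of your interlacing plan, which as written does not exist).
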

\begin{proof}{Proof.}
    See Appendix~\ref{proof:amesp_gap}. \qed
\end{proof}
When $k\in [d-1]$, Part (i) of \Cref{cor:amesp_gap} guarantees an optimality gap for \ref{amesp-R} equal to the minimum of two bounds: the first bound $(d-1)/(d-k)$, is newly derived in this work, while the second $(n-k+1)$ is the prior result in \cite{li2024best}. When $k\le (n+1)/2$, it is easy to check that $n-k+1\ge k\ge (d-1)/(d-k)$, where the second inequality is from the proof of \Cref{cor:amesp_gap}.
While not directly comparable, our bound $(d-1)/(d-k)$ offers a compelling advantage due to its independence of $n$ and its scaling with $d$. Specifically, it becomes smaller as the dimension $d$ increases, which is a somewhat counterintuitive yet interesting property.
 This suggests that for $k\in [d-1]$, \ref{amesp-R} achieves improved theoretical guarantees in high-dimensional settings.

According to \Cref{cor:gap,cor:amesp_gap}, the two convex relaxations, \ref{aed-R1} and \ref{amesp-R}, differ significantly in their theoretical guarantees.
Overall, \ref{amesp-R} performs well for  
$k\in[d]$, while \ref{aed-R1} is effective for larger values of $k\in [d,n]$. 
In addition,
we would like to highlight that \ref{amesp-R} has two major limitations.
\begin{enumerate}[(i)]
    \item First, \ref{amesp-R} is formulated in a  higher-dimensional matrix space of size  $n\times n$, as opposed to the $d\times d$ space used in \ref{aed-R1}. This increases the computational cost of solving \ref{amesp-R}, making it less unfavorable for large-scale instances; and
    \item For $k\in [d+1,n]$, according to Part (ii) of \Cref{cor:amesp_gap},  the lower bound from \ref{amesp-R} may underperform even the trivial bound of zero. The numerical results in Subsection \ref{subsec:rel} further confirm this, showing that \ref{amesp-R} often yields negative lower bounds  and  performs much more  poorly compared to \ref{aed-R1} in this range of $k$.
\end{enumerate}

\subsection{A Mixed-integer Second-order Cone Programming Formulation}\label{subsec:misocp}
This subsection presents a MISOCP reformulation of \ref{aed}, which was first introduced by \cite{sagnol2015computing} based on the Gauss–Markov Theorem from statistical estimation theory \citep{pukelsheim2006optimal}.  We provide a different proof of this formulation through an optimization lens and further contribute by analyzing the optimality gap of its continuous relaxation.

Using the Woodbury matrix identity, we show that the objective function of \ref{aed-R1} is SOCP representable for any (not necessarily binary) vector $\bm x$, as summarized below.
\begin{lemma}\label{lem:socp}
The identity below holds for any vector $\bm x\in [0,1]^n$.
	\begin{align*}
\tr \bigg[\bigg(\sum_{i\in [n]} x_i \bm a_i\bm a_i^{\top}+ \lambda \bm I_d\bigg)^{-1} \bigg] =	\min_{\bm X \in \Re^{d\times n}} \frac{1}{\lambda} \left\|\bm X  \Diag\left(\sqrt{\bm x}\right) \bm A^{\top} - \bm I_{d}\right\|_F^2 + \|\bm X\|_F^2.
	\end{align*}
\end{lemma}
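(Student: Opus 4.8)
The plan is to recognize the right-hand side as an unconstrained, strongly convex quadratic least-squares problem in $\bm X$ and to show that its optimal value equals $\tr(\bm M^{-1})$, where $\bm M := \sum_{i\in[n]} x_i\bm a_i\bm a_i^\top + \lambda \bm I_d$. First I would introduce the shorthand $\bm B := \Diag(\sqrt{\bm x})\bm A^\top \in \Re^{n\times d}$, so that $\bm B^\top \bm B = \bm A\Diag(\bm x)\bm A^\top = \sum_{i\in[n]} x_i \bm a_i\bm a_i^\top$ and hence $\bm M = \bm B^\top \bm B + \lambda \bm I_d$. The objective becomes $g(\bm X) := \frac{1}{\lambda} \|\bm X\bm B - \bm I_d\|_F^2 + \|\bm X\|_F^2$, a sum of a convex term and the strongly convex term $\|\bm X\|_F^2$; since $\lambda>0$, the function $g$ is strongly convex and coercive, so it admits a unique minimizer characterized by the stationarity condition $\nabla g(\bm X)=\bm 0$.

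Next I would compute the gradient. Expanding the Frobenius norms and differentiating gives $\nabla g(\bm X) = \frac{2}{\lambda}(\bm X\bm B\bm B^\top - \bm B^\top) + 2\bm X$, so the stationarity condition reads $\bm X(\bm B\bm B^\top + \lambda \bm I_n) = \bm B^\top$, yielding $\bm X^\star = \bm B^\top(\bm B\bm B^\top + \lambda \bm I_n)^{-1}$. The crucial algebraic step, and the place where the Woodbury/push-through identity enters, is rewriting this as $\bm X^\star = (\bm B^\top \bm B + \lambda \bm I_d)^{-1}\bm B^\top = \bm M^{-1}\bm B^\top$. This follows from the elementary identity $(\bm B^\top\bm B+\lambda\bm I_d)\bm B^\top = \bm B^\top(\bm B\bm B^\top + \lambda \bm I_n)$, together with the invertibility of both shifted matrices guaranteed by $\lambda>0$; it is also what moves the minimizer from the $n\times n$ geometry into the $d\times d$ geometry of $\bm M$.

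Finally I would substitute $\bm X^\star$ back into $g$. The key simplification is $\bm X^\star \bm B = \bm M^{-1}\bm B^\top\bm B = \bm M^{-1}(\bm M - \lambda \bm I_d) = \bm I_d - \lambda \bm M^{-1}$, so that $\bm X^\star\bm B - \bm I_d = -\lambda \bm M^{-1}$ and therefore $\frac{1}{\lambda}\|\bm X^\star\bm B - \bm I_d\|_F^2 = \lambda\tr(\bm M^{-2})$. Likewise $\|\bm X^\star\|_F^2 = \tr(\bm M^{-1}\bm B^\top\bm B\,\bm M^{-1}) = \tr(\bm M^{-1}) - \lambda\tr(\bm M^{-2})$. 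Adding the two contributions makes the $\lambda\tr(\bm M^{-2})$ terms cancel, leaving $g(\bm X^\star) = \tr(\bm M^{-1})$, which is exactly the left-hand side of the claimed identity.

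I expect the main obstacle to be bookkeeping rather than conceptual: correctly carrying the Frobenius-norm expansions and trace manipulations through the mismatched $d\times n$ and $n\times n$ dimensions, and invoking the push-through identity in the right direction so that $\bm X^\star$ is expressed through $\bm M^{-1}$. The clean cancellation of the $\tr(\bm M^{-2})$ terms at the end serves as a reassuring check that the algebra has been carried out correctly.
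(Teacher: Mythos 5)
Your proposal is correct and follows essentially the same route as the paper: both treat the right-hand side as an unconstrained strongly convex quadratic, solve the first-order condition to get $\bm X^\star = \bm B^\top(\bm B\bm B^\top+\lambda\bm I_n)^{-1}$, and use a Woodbury-type identity to land in the $d\times d$ geometry. The only cosmetic difference is that the paper applies the Woodbury identity directly to the optimal value $\frac{d}{\lambda}-\frac{1}{\lambda}\tr[\bm B(\bm B^\top\bm B+\lambda\bm I_n)^{-1}\bm B^\top]$, whereas you push the identity through the minimizer first and then let the $\lambda\tr(\bm M^{-2})$ terms cancel; both computations are valid and equivalent.
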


\begin{proof}{Proof.}
    See Appendix~\ref{proof:socp}. \qed
\end{proof}

Using the identity in \Cref{lem:socp} to replace the objective function of \eqref{aed1}, we arrive at an equivalent MISCOP formulation. 
 
\begin{proposition}\label{prop:misocp}
\ref{aed} is equivalent to a MISOCP problem:
\begin{align}\label{aed_miscop}
z_k=	\min_{\begin{subarray}{c}
		\bm x \in \{0,1\}^n, \\
        \bm X \in \Re^{d\times n},
	\bm \mu \in \Re^n
		\end{subarray}
}\left\{ \frac{1}{\lambda} \|\bm X \bm A^{\top} - \bm I_{d}\|_F^2 +\sum_{i\in [n]}
	\mu_i: {\|\bm X_{i}\|^2_2} \le \mu_i x_i,\forall i \in [n], \sum_{i\in [n]} x_i=k \right\}. \tag{MISOCP} 
\end{align}
In addition, the continuous relaxation of \ref{aed_miscop} matches that of \ref{aed}, that is,
\begin{align}\label{aed_scop}
{\nu}_k^{\text{C}}=	\min_{\begin{subarray}{c}
	\bm x \in [0,1]^n, \\
        \bm X \in \Re^{d\times n},
	\bm \mu \in \Re^n
	\end{subarray}}\left\{ \frac{1}{\lambda} \|\bm X \bm A^{\top} - \bm I_{d}\|_F^2 +\sum_{i\in [n]}
	\mu_i: {\|\bm X_{i}\|^2_2} \le \mu_i x_i,\forall i \in [n], \sum_{i\in [n]} x_i=k \right\}.
\end{align}
\end{proposition}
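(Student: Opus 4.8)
The plan is to reduce both claims to a single pointwise identity: for every fixed $\bm x\in[0,1]^n$,
\begin{align*}
\tr\Bigl[\Bigl(\textstyle\sum_{i\in[n]} x_i \bm a_i\bm a_i^{\top}+\lambda\bm I_d\Bigr)^{-1}\Bigr]
=\min_{\bm X,\bm\mu}\Bigl\{\tfrac{1}{\lambda}\|\bm X\bm A^{\top}-\bm I_d\|_F^2+\sum_{i\in[n]}\mu_i:\ \|\bm X_i\|_2^2\le\mu_i x_i,\ \forall i\in[n]\Bigr\}.
\end{align*}
Granting this identity, both statements follow at once: minimizing each side over $\bm x\in\{0,1\}^n$ subject to $\sum_{i}x_i=k$ produces \ref{aed_miscop}, while minimizing over $\bm x\in[0,1]^n$ subject to $\sum_{i}x_i=k$ produces \eqref{aed_scop}, whose left-hand side is precisely $\nu_k^{\text{C}}$ by the definition of \ref{aed-R1}. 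So the entire proof rests on establishing the displayed identity for an arbitrary feasible weight vector.

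To prove the pointwise identity I would start from \Cref{lem:socp}, which (renaming its minimizing matrix to $\bm W$ to avoid a clash) expresses the left-hand side as $\min_{\bm W}\tfrac{1}{\lambda}\|\bm W\Diag(\sqrt{\bm x})\bm A^{\top}-\bm I_d\|_F^2+\|\bm W\|_F^2$. The key step is the column-wise change of variables $\bm X:=\bm W\Diag(\sqrt{\bm x})$, i.e.\ $\bm X_i=\sqrt{x_i}\,\bm W_i$, which turns $\bm W\Diag(\sqrt{\bm x})\bm A^{\top}$ into $\bm X\bm A^{\top}$ and hence matches the first term of the objective exactly. For every index with $x_i>0$ this substitution is invertible, with $\|\bm W_i\|_2^2=\|\bm X_i\|_2^2/x_i$, so the second term $\|\bm W\|_F^2$ becomes $\sum_{i:\,x_i>0}\|\bm X_i\|_2^2/x_i$. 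I would then epigraph-ify each such summand through the rotated second-order cone constraint $\|\bm X_i\|_2^2\le\mu_i x_i$, observing that at optimality the inequality is tight, $\mu_i=\|\bm X_i\|_2^2/x_i$, which recovers the stated objective $\sum_i\mu_i$.

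The main obstacle, and essentially the only place demanding care, is the degenerate index set where $x_i=0$, on which the substitution $\bm X_i=\sqrt{x_i}\,\bm W_i$ is not a bijection. Here I would argue as follows. In \Cref{lem:socp}, any column $\bm W_i$ with $x_i=0$ is annihilated by $\Diag(\sqrt{\bm x})$ in the first term, so it contributes only $\|\bm W_i\|_2^2$ and is therefore optimally set to $\bm 0$; the corresponding $\bm X_i=\sqrt{x_i}\,\bm W_i=\bm 0$. On the conic side, the constraint $\|\bm X_i\|_2^2\le\mu_i x_i=0$ forces $\bm X_i=\bm 0$ and, since the rotated cone also imposes $\mu_i\ge0$, makes $\mu_i=0$ optimal. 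Thus the zero-weight columns contribute nothing to either side, leaving the identity intact. For the integer claim (part one) the same reasoning applies verbatim after noting that $\Diag(\sqrt{\bm x})=\Diag(\bm x)$ for binary $\bm x$, so that the indices with $x_i=0$ are exactly those excluded from the selected subset. This completes the two reductions and yields both \ref{aed_miscop} and the matching relaxation value $\nu_k^{\text{C}}$.
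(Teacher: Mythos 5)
Your proposal is correct and follows essentially the same route as the paper: both start from \Cref{lem:socp}, pass between the two formulations via the column-wise substitution $\bm X_i=\sqrt{x_i}\,\bm W_i$, handle the degenerate indices with $x_i=0$ by noting the corresponding columns are optimally zero, and introduce $\mu_i$ as tight epigraph variables for $\|\bm X_i\|_2^2/x_i$. Your only (minor) organizational difference is stating a single pointwise identity valid for all $\bm x\in[0,1]^n$ up front, whereas the paper treats the binary case first and then remarks that the argument is independent of binarity.
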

\begin{proof}{Proof.}
    See Appendix~\ref{proof:misocp}. \qed
\end{proof}

As
 \ref{aed_miscop} attains the same continuous relaxation value as \eqref{aed1}, we can directly apply the optimality gaps of \ref{aed-R1} from \Cref{cor:gap} to \eqref{aed_scop}, which are summarized below. However, analogous to \ref{aed-R1}, \eqref{aed_scop} may provide a weak lower bound for $k\in [d-1]$.

 \begin{corollary}\label{cor:misocpgap}
The following hold for the lower bound ${\nu}_k^{\text{C}}$ in \eqref{aed_scop}:
\begin{enumerate}[(i)]
\item For any $k\in [d,n]$, we have
$$
1\le \frac{z_k}{{\nu}_k^{\text{C}}} \le \frac{n-d+1}{k-d+1};
$$
and
\item For $k\in [d-1]$, there is a case of \ref{aed-R1} where ${z_k}/{{\nu}_k^{\text{C}}}\to \infty$ as $\lambda\to 0$. 
\end{enumerate}  
\end{corollary}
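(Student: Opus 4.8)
The plan is to inherit everything from the two results that precede this corollary and do essentially no new analysis. The key observation is that Proposition~\ref{prop:misocp} already establishes the identity \eqref{aed_scop}, asserting that the continuous relaxation of \ref{aed_miscop} attains exactly the value ${\nu}_k^{\text{C}}$, the same quantity that appears as the relaxation value of \ref{aed-R1}. In other words, the symbol ${\nu}_k^{\text{C}}$ in \eqref{aed_scop} and in \ref{aed-R1} denotes one and the same number. Consequently, the optimality ratio $z_k/{\nu}_k^{\text{C}}$ is literally identical in the two formulations, and every bound already proved for the gap of \ref{aed-R1} transfers verbatim.

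Concretely, I would first invoke the second assertion of Proposition~\ref{prop:misocp} to record the equality of the two relaxation values. For Part~(i), I would then apply Part~(i) of \Cref{cor:gap} directly, which for $k\in[d,n]$ yields $1\le z_k/{\nu}_k^{\text{C}}\le (n-d+1)/(k-d+1)$; no recomputation is needed because the denominator ${\nu}_k^{\text{C}}$ is unchanged. For Part~(ii), I would invoke Part~(ii) of \Cref{cor:gap}, which exhibits a family of instances with $z_k/{\nu}_k^{\text{C}}\to\infty$ as $\lambda\to 0$ for $k\in[d-1]$; the very same instances certify the unbounded gap for \eqref{aed_scop}, again because the relaxation values coincide.

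Since all the analytical content has been carried out in \Cref{cor:gap} (the derivation of the gap, relying on \Cref{lem:techniques} for the regime $k\in[d,n]$ and a degenerate $\lambda\to 0$ instance for $k\in[d-1]$) and in Proposition~\ref{prop:misocp} (the value-preserving SOCP reformulation via the Woodbury identity of \Cref{lem:socp}), there is no genuine obstacle here. The corollary is a direct restatement of \Cref{cor:gap} under a reformulation that leaves the relaxation value invariant. The only point deserving a moment's care is to state explicitly that \eqref{aed_scop} and \ref{aed-R1} share the identical optimum ${\nu}_k^{\text{C}}$, so that the reader sees no gap-bound needs to be re-derived; this is precisely the equality furnished by Proposition~\ref{prop:misocp}, and citing it closes the argument.
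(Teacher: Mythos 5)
Your proposal is correct and matches the paper's reasoning exactly: the paper justifies \Cref{cor:misocpgap} in the sentence immediately preceding it by noting that \ref{aed_miscop} attains the same continuous relaxation value ${\nu}_k^{\text{C}}$ as \eqref{aed1} (via \Cref{prop:misocp}), so the gap bounds of \Cref{cor:gap} transfer verbatim. No further comment is needed.
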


\section{A Novel Convex Integer Formulation and its Convex Relaxation}\label{subsec:augmesp}
The limitations of \ref{aed-R1} and \ref{amesp-R}  highlight the need for a stronger reformulation of \ref{aed}.
Motivated by this, this section introduces a novel convex integer program for \ref{aed}, and we demonstrate that its continuous relaxation is stronger than \ref{aed-R1} and \ref{amesp-R}.  We also develop a cutting-plane algorithm to  solve \ref{aed} based on this  formulation.

\subsection{A Novel Convex Integer Programming Formulation}
We observe that for a binary vector $\bm x$ with a cardinality of $k$, the matrix $\sum_{i\in [n]} x_i\bm a_i\bm a_i^{\top}$ in the objective of \eqref{aed1} has a rank of at most $\min\{k, d\}$, and becomes low-rank when $k<d$. This rank deficiency  results in an arbitrarily large gap of \ref{aed-R1} for $k<d$, as detailed in the proof of Part (ii) of \Cref{cor:gap}. Inspired by this observation, we introduce a function based on  the top $\min\{k, d\}$  eigenvalues to reformulate the objective function of \eqref{aed1}.  We define  $\tilde k := \min\{k, d\}$ throughout for brevity. 

\begin{definition}\label{def:gamma}
 For a matrix $\bm X\in \S_+^d$ with eigenvalues $\sigma_1\ge \cdots \ge \sigma_d\ge 0$, we define a function $\Gamma(\bm X;\lambda): \S_+^d \to \Re$ as
 $$\Gamma(\bm X;\lambda) =\sum_{i\in [\tilde k]} \frac{1}{\sigma_i+\lambda}.$$
\end{definition}

\begin{lemma}\label{lem:obj}
For a binary vector $\bm x\in \{0, 1\}^n$ with $\sum_{i\in [n]}x_i=k$, the identity below holds.
$$\Gamma \bigg(\sum_{i\in [n]} x_i \bm a_i\bm a_i^{\top};\lambda \bigg) + \frac{d-\tilde k}{\lambda} = \tr\bigg (\bigg(\sum_{i\in [n]} x_i \bm a_i\bm a_i^{\top} + \lambda\bm I_d \bigg)^{-1}\bigg).$$
\end{lemma}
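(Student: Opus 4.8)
The plan is to reduce the identity to a simple eigenvalue comparison and then invoke a rank bound. Write $\bm M := \sum_{i\in[n]} x_i \bm a_i\bm a_i^\top \in \S_+^d$ and let $\sigma_1\ge\cdots\ge\sigma_d\ge 0$ be its eigenvalues, consistent with \Cref{def:gamma}. If $\bm M = \bm U\Diag(\bm\sigma)\bm U^\top$ is a spectral decomposition, then $\bm M+\lambda\bm I_d = \bm U\Diag(\bm\sigma+\lambda\bm 1)\bm U^\top$ and hence $(\bm M+\lambda\bm I_d)^{-1} = \bm U\Diag\big(1/(\sigma_i+\lambda)\big)\bm U^\top$, so the right-hand side satisfies
$$\tr\big((\bm M+\lambda\bm I_d)^{-1}\big) = \sum_{i\in[d]} \frac{1}{\sigma_i+\lambda}.$$
Comparing with $\Gamma(\bm M;\lambda)=\sum_{i\in[\tilde k]} 1/(\sigma_i+\lambda)$, the claimed equality reduces to establishing that the residual tail contributes exactly the constant, i.e.
$$\sum_{i\in[\tilde k+1,\, d]} \frac{1}{\sigma_i+\lambda} = \frac{d-\tilde k}{\lambda}.$$

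The key step is a rank argument. Since $\bm x$ is binary with $\sum_{i\in[n]} x_i = k$, setting $S=\{i: x_i=1\}$ gives $|S|=k$ and $\bm M = \sum_{i\in S}\bm a_i\bm a_i^\top = \bm A_S\bm A_S^\top$, a sum of $k$ rank-one positive semidefinite matrices. Therefore $\rank(\bm M)\le k$, and because $\bm M\in\S_+^d$ we also have $\rank(\bm M)\le d$; together these yield $\rank(\bm M)\le\min\{k,d\}=\tilde k$. Consequently at most $\tilde k$ eigenvalues of $\bm M$ are nonzero, forcing $\sigma_{\tilde k+1}=\cdots=\sigma_d=0$. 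Substituting these zeros into the tail sum gives $\sum_{i\in[\tilde k+1,\,d]} 1/(0+\lambda) = (d-\tilde k)/\lambda$, which is precisely the required equality, completing the proof.

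There is no substantive obstacle here: the entire content lies in recognizing that the cardinality-$k$ binary constraint caps the rank of $\bm M$ at $\tilde k$, so the $d-\tilde k$ ``missing'' eigenvalues of $\bm M+\lambda\bm I_d$ all equal $\lambda$ and collapse the trace of the inverse onto the additive constant. The only point requiring a moment's care is the case distinction hidden in $\tilde k=\min\{k,d\}$: when $k\ge d$ we have $\tilde k=d$, the residual sum is empty, the constant $(d-\tilde k)/\lambda$ vanishes, and the identity degenerates to $\Gamma(\bm M;\lambda)=\tr((\bm M+\lambda\bm I_d)^{-1})$, so the same reasoning uniformly covers both the $k<d$ and $k\ge d$ regimes.
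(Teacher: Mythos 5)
Your proof is correct and follows essentially the same route as the paper's: bound the rank of $\sum_{i\in[n]} x_i\bm a_i\bm a_i^{\top}$ by $\tilde k$ via the sum of $k$ rank-one terms, conclude $\sigma_{\tilde k+1}=\cdots=\sigma_d=0$, and observe that the $d-\tilde k$ trailing eigenvalues of the shifted matrix each contribute $1/\lambda$ to the trace of the inverse. Your added remark on the degenerate case $k\ge d$ is a harmless elaboration of the same argument.
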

\begin{proof}{Proof.}
    See Appendix~\ref{proof:lem_obj}. \qed
\end{proof}

Although the function $\Gamma$ equals the objective function of \eqref{aed1} based on \Cref{lem:obj}, it is not convex, as noted below. To address this,  we derive its convex envelope, denoted by $\conv \Gamma$, which provides the tightest convex underestimator of $\Gamma$ \citep{rockafellar1997convex}. While  \cite{li2024best} derived the convex envelope $\conv \Gamma$ for the special case $\lambda=0$ to obtain \ref{amesp-R}, our \Cref{prop:conv} below extends  this result to the setting with $\lambda>0$.
\begin{remark}\label{remark:nonconvex}
    The function $\Gamma(\bm X;\lambda): \S_+^d \to \Re$ is nonconvex. \qedA
\end{remark}
\begin{proof}{Proof.}
    See Appendix~\ref{proof:nonconvex}. \qed
\end{proof}

Next, we present a technical result of \citet[Lemma 14]{nikolov2015randomized}. We will use the integer $\eta$ throughout this section,  with the vector $\bm \sigma$ being specified in context. We show that $\eta$ remains unchanged under certain modifications to  $\bm \sigma$. This invariance result plays a key role in deriving the convex envelope $\conv \Gamma$ and its subgradient. 
\begin{lemma}[The integer $\eta$, \cite{nikolov2015randomized}]\label{lem:eta}
Let $\sigma_1\ge \cdots\ge \sigma_d\ge 0$ be nonnegative reals. Then, there exists a unique integer $\eta$, $0\le \eta \le \tilde k-1$,  such that $\sigma_{\eta}>\frac{1}{\tilde k-\eta}\sum_{i\in[\eta+1,d]} \sigma_i\ge \sigma_{\eta+1} $ with the convention $\sigma_0=\infty$. 
\end{lemma}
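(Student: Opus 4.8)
The plan is to collapse the two-sided inequality defining $\eta$ into a single sign change of a scalar sequence, and then to prove that this sequence is monotone, from which both existence and uniqueness fall out by a discrete intermediate-value argument. Concretely, for each integer $j$ with $0\le j\le \tilde k-1$ I would introduce the auxiliary quantity
\[
\phi(j):=\sum_{i\in[j+1,d]}\sigma_i-(\tilde k-j)\,\sigma_{j+1}.
\]
Multiplying by the positive factor $\tilde k-j$, the right-hand inequality $\frac{1}{\tilde k-j}\sum_{i\in[j+1,d]}\sigma_i\ge \sigma_{j+1}$ evaluated at index $\eta$ is exactly $\phi(\eta)\ge 0$. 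A one-step index shift gives $\phi(j-1)=\sum_{i\in[j+1,d]}\sigma_i-(\tilde k-j)\sigma_j$, so the left-hand inequality $\sigma_j>\frac{1}{\tilde k-j}\sum_{i\in[j+1,d]}\sigma_i$ at index $j\ge 1$ is exactly $\phi(j-1)<0$; at $j=0$ it holds trivially thanks to the convention $\sigma_0=\infty$. Thus the sought $\eta$ is precisely an index at which $\phi$ passes from negative to nonnegative.

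The crux of the argument is then to show that $\phi$ is nondecreasing. I would compute the consecutive difference, where the tail sums telescope, and obtain
\[
\phi(j)-\phi(j-1)=(\tilde k-j)\,(\sigma_j-\sigma_{j+1})\ge 0\qquad\text{for } 1\le j\le \tilde k-1,
\]
with nonnegativity following from the ordering $\sigma_1\ge\cdots\ge\sigma_d$ together with $\tilde k-j\ge 1$. I would also record the boundary value $\phi(\tilde k-1)=\sum_{i\in[\tilde k+1,d]}\sigma_i\ge 0$, which certifies that $\phi$ does reach the nonnegative regime.

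With monotonicity established, I set $\eta:=\min\{\,j:0\le j\le \tilde k-1,\ \phi(j)\ge 0\,\}$, which is well defined since $\phi(\tilde k-1)\ge 0$. Then $\phi(\eta)\ge 0$ yields the right inequality, while minimality gives either $\eta=0$ (left inequality trivial) or $\phi(\eta-1)<0$ (left inequality), so $\eta$ satisfies both. Uniqueness is immediate: monotonicity forces $\{j:\phi(j)\ge 0\}$ to be an upper interval $\{\eta,\eta+1,\dots,\tilde k-1\}$, so every $j<\eta$ violates the right inequality ($\phi(j)<0$) and every $j>\eta$ violates the left inequality ($\phi(j-1)\ge\phi(\eta)\ge 0$). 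I expect the only delicate points to be bookkeeping — matching the strict versus nonstrict inequalities to the sign pattern of $\phi$, and handling the boundary case $\eta=0$ under the convention $\sigma_0=\infty$ — rather than any real difficulty, since once the auxiliary function $\phi$ is identified and seen to be monotone, it does all the substantive work.
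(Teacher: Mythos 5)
Your argument is correct. Note that the paper itself does not prove this statement: it imports it verbatim as Lemma 14 of \cite{nikolov2015randomized}, so there is no in-paper proof to compare against. Your reduction to the single scalar sequence $\phi(j)=\sum_{i\in[j+1,d]}\sigma_i-(\tilde k-j)\sigma_{j+1}$ is clean and all the steps check out: the identity $\phi(j-1)=\sum_{i\in[j+1,d]}\sigma_i-(\tilde k-j)\sigma_j$ correctly encodes the left (strict) inequality at index $j$ as $\phi(j-1)<0$, the telescoped difference $\phi(j)-\phi(j-1)=(\tilde k-j)(\sigma_j-\sigma_{j+1})\ge 0$ is right, and the boundary value $\phi(\tilde k-1)=\sum_{i\in[\tilde k+1,d]}\sigma_i\ge 0$ guarantees the minimum defining $\eta$ exists (this uses $\tilde k\le d$, which holds since $\tilde k=\min\{k,d\}$). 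The monotone sign-change argument then delivers both existence and uniqueness exactly as you describe, with the strict/nonstrict inequalities matched consistently and the $\eta=0$ case absorbed by the convention $\sigma_0=\infty$. This is essentially the standard proof of the Nikolov lemma, and it would serve as a valid self-contained substitute for the citation.
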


\begin{lemma}\label{lem:inv}
Let  $s\in [d, n]$, and let $\sigma_1\ge \cdots\ge \sigma_d\ge 0$ be nonnegative reals. Define a new vector $\bm \sigma^+\in \Re_+^s$ as 
\[\sigma^+_i = \sigma_i+\lambda, \forall i\in[\tilde k], \ \  \sigma^+_i =\sigma_i, \forall i\in[\tilde k+1, d],  \ \ \sigma^+_i=0, \forall i\in[d+1,s].\]
Then, the unique index $0\le \eta\le \tilde k-1$, defined based on $\bm \sigma$ in \Cref{lem:eta}, remains valid for $\bm \sigma^+$, i.e.,
$$\sigma^+_{\eta}>\frac{1}{\tilde k-\eta} \sum_{i\in[\eta+1,s]} \sigma^+_i    \ge \sigma^+_{\eta+1}.$$
\end{lemma}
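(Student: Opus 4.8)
The plan is to compute the rescaled partial sum for $\bm\sigma^+$ explicitly and observe that it exceeds the corresponding quantity for $\bm\sigma$ by exactly the additive constant $\lambda$, after which both defining inequalities for $\eta$ transfer verbatim. The whole argument rests on the fact that, because $\eta\le \tilde k-1$, both boundary indices $\eta,\eta+1$ and exactly $\tilde k-\eta$ of the summed indices fall inside the $\lambda$-shifted block $[\tilde k]$, so the shift is uniform and cancels.

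First I would split the sum $\sum_{i\in[\eta+1,s]}\sigma_i^+$ according to the three index blocks in the definition of $\bm\sigma^+$. Since $\eta+1\le\tilde k$, the block $[\eta+1,\tilde k]$ contributes $\sum_{i\in[\eta+1,\tilde k]}(\sigma_i+\lambda)$, the block $[\tilde k+1,d]$ contributes $\sum_{i\in[\tilde k+1,d]}\sigma_i$ unchanged, and the padding block $[d+1,s]$ contributes zero. The first block has exactly $\tilde k-\eta$ terms, so the total $\lambda$ contribution is $(\tilde k-\eta)\lambda$, giving
\[
\sum_{i\in[\eta+1,s]}\sigma_i^+ \;=\; \sum_{i\in[\eta+1,d]}\sigma_i \;+\;(\tilde k-\eta)\lambda .
\]
Dividing by $\tilde k-\eta$ yields
\[
\frac{1}{\tilde k-\eta}\sum_{i\in[\eta+1,s]}\sigma_i^+ \;=\; \frac{1}{\tilde k-\eta}\sum_{i\in[\eta+1,d]}\sigma_i \;+\;\lambda .
\]

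Next I would verify the two target inequalities. Because $\eta+1\le\tilde k$, the index $\eta+1$ lies in the first block, so $\sigma_{\eta+1}^+=\sigma_{\eta+1}+\lambda$, and likewise $\sigma_\eta^+=\sigma_\eta+\lambda$ whenever $\eta\ge 1$. Substituting these together with the displayed identity into the claimed chain $\sigma_\eta^+>\frac{1}{\tilde k-\eta}\sum_{i\in[\eta+1,s]}\sigma_i^+\ge\sigma_{\eta+1}^+$, the $\lambda$ terms cancel on both sides, and the claim reduces exactly to the two inequalities $\sigma_\eta>\frac{1}{\tilde k-\eta}\sum_{i\in[\eta+1,d]}\sigma_i\ge\sigma_{\eta+1}$ that define $\eta$ for $\bm\sigma$ via \Cref{lem:eta}. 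The sole separate case is $\eta=0$, where the convention $\sigma_0^+=\infty$ makes the left inequality trivial while the right one is handled as above.

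I expect no substantive obstacle; the argument is driven entirely by the uniform $\lambda$-shift of the boundary indices and of the summed block. The only point requiring care is the index bookkeeping across the three ranges—specifically, confirming that the number of $\lambda$-shifted summands equals the denominator $\tilde k-\eta$ so that the shift on the average is precisely $\lambda$—together with the trivial $\eta=0$ boundary case.
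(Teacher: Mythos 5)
Your proposal is correct and follows essentially the same route as the paper's proof: the paper simply says that adding $\lambda$ to all three terms of the defining chain for $\bm\sigma$ preserves the inequalities and then appends the zero padding, while you spell out the underlying bookkeeping — that the $\lambda$-shifted block $[\eta+1,\tilde k]$ contains exactly $\tilde k-\eta$ summands, so the averaged middle term shifts by precisely $\lambda$. No gap; your version is just a more explicit rendering of the same argument.
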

\begin{proof}{Proof.}
    See Appendix~\ref{proof:inv}. \qed
\end{proof}

\begin{proposition}\label{prop:conv}
 For a matrix $\bm X\in \S_+^d$ with eigenvalues $\sigma_1\ge \cdots \ge \sigma_d\ge 0$, the following hold:
 \begin{enumerate}[(i)]
     \item The convex envelope of the function $\Gamma$, denoted by $\conv \Gamma$, is defined as 
	\[\conv \Gamma(\bm X; \lambda)=\sum_{i\in [\eta]} \frac{1}{\sigma_i+\lambda} + \frac{(\tilde k-\eta)^2}{\sum_{i\in [\eta+1, d]} \sigma_i + (\tilde k-\eta)\lambda} ,\]
where $0\le \eta \le \tilde k-1$ is a unique integer from \Cref{lem:eta}.
\item  Let $\bm X= \bm Q\Diag(\bm \sigma)\bm Q^{\top}$ denote the eigen-decomposition of $\bm X$, and let $r$ denote its rank. Then, $\bm Q\Diag\left(\bm \varsigma\right)\bm Q^{\top}$ is a subgradient of $\conv \Gamma(\bm X;\lambda)$ at $\bm X$, where
\begin{align*}    \varsigma_{i}=\frac{1}{\sigma_{i}+\lambda}, \forall i\in [\eta], \ \ \varsigma_{i}=\frac{\tilde k-\eta}{\sum_{j\in [\eta+1,d]}\sigma_j+(\tilde k-\eta)\lambda}, \forall i \in [\eta+1, r], \ \ \varsigma_{i}=\varsigma_{\eta+1}, \forall i\in [r+1, d].
\end{align*}
 \end{enumerate}
\end{proposition}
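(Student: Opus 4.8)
The plan is to reduce the matrix statement to a computation about a symmetric function of the eigenvalues, and then determine its convex envelope and subdifferential explicitly. Writing $\bm\sigma=\bm\sigma(\bm X)$ for the vector of sorted eigenvalues, observe that $\Gamma(\bm X;\lambda)=\phi_\lambda(\bm\sigma)$, where $\phi_\lambda(\bm\sigma):=\sum_{i\in[\tilde k]}1/(\sigma_{[i]}+\lambda)$ is a symmetric function on $\Re_+^d$ depending only on the $\tilde k$ largest entries. By the theory of convex spectral functions already employed by \citet{li2024best} for the case $\lambda=0$, taking convex envelopes commutes with the eigenvalue map, so $\conv\Gamma(\bm X;\lambda)=(\conv\phi_\lambda)(\bm\sigma)$, and every subgradient of $\conv\Gamma$ at $\bm X=\bm Q\Diag(\bm\sigma)\bm Q^\top$ has the form $\bm Q\Diag(\bm\varsigma)\bm Q^\top$ with $\bm\varsigma\in\partial(\conv\phi_\lambda)(\bm\sigma)$. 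It therefore suffices to (a) identify $\conv\phi_\lambda$ with the claimed vector function $G(\bm\sigma):=\sum_{i\in[\eta]}1/(\sigma_i+\lambda)+(\tilde k-\eta)^2/\big(\sum_{i\in[\eta+1,d]}\sigma_i+(\tilde k-\eta)\lambda\big)$, and (b) compute $\partial G(\bm\sigma)$.

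For (a) I would prove the two inequalities separately. \emph{Underestimator} ($G\le\phi_\lambda$): splitting $\phi_\lambda$ at the index $\eta$ of \Cref{lem:eta}, it remains to bound the tail $\sum_{i\in[\eta+1,\tilde k]}1/(\sigma_i+\lambda)$ from below by the pooled term of $G$. Applying the harmonic--arithmetic mean inequality \Cref{ineq:amhm} to the $\tilde k-\eta$ numbers $\{\sigma_i+\lambda\}_{i\in[\eta+1,\tilde k]}$ gives the lower bound $(\tilde k-\eta)^2/\big(\sum_{i\in[\eta+1,\tilde k]}\sigma_i+(\tilde k-\eta)\lambda\big)$; enlarging the denominator by the nonnegative amount $\sum_{i\in[\tilde k+1,d]}\sigma_i$ only decreases this quantity and reproduces exactly the pooled term. \emph{Tightness} ($\conv\phi_\lambda\le G$): I would exhibit, for each $\bm\sigma$, an explicit convex combination summing to $\bm\sigma$ on whose atoms $\phi_\lambda$ takes the constant value $G(\bm\sigma)$. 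Fixing the top $\eta$ coordinates, let $\bm\pi^\star$ be the vector on the remaining $d-\eta$ coordinates with $\tilde k-\eta$ entries equal to $\Sigma/(\tilde k-\eta)$ (where $\Sigma:=\sum_{i\in[\eta+1,d]}\sigma_i$) and the rest zero. The defining inequality $\sigma_{\eta+1}\le\Sigma/(\tilde k-\eta)$ from \Cref{lem:eta} forces the tail $(\sigma_{\eta+1},\dots,\sigma_d)$ to be majorized by $\bm\pi^\star$, hence to lie in the convex hull of the permutations of $\bm\pi^\star$; each such permutation leaves the top $\eta$ fixed coordinates on top (again because $\sigma_\eta>\Sigma/(\tilde k-\eta)$) and so yields $\phi_\lambda=G(\bm\sigma)$, giving $\conv\phi_\lambda(\bm\sigma)\le G(\bm\sigma)$. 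The two bounds give $\conv\phi_\lambda=G$, provided $G$ is convex (needed to turn the underestimator into $G\le\conv\phi_\lambda$); this convexity follows cleanly from the identity $G(\bm\sigma)=\Phi(\bm\sigma^+)$, with $\Phi$ from \Cref{def:phi} and $\bm\sigma^+$ the lifted vector of \Cref{lem:inv}, whose index $\iota$ coincides with $\eta$ by that lemma, so $G$ is the convex function $\Phi$ composed with an affine lift.

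For (b), once $\conv\phi_\lambda=G$ is in hand I would compute $\partial G(\bm\sigma)$ and translate it back through the spectral correspondence. The candidate $\bm\varsigma$ assigns $1/(\sigma_i+\lambda)$ to the isolated indices $i\in[\eta]$ and the common pooled value $(\tilde k-\eta)/\big(\sum_{j\in[\eta+1,d]}\sigma_j+(\tilde k-\eta)\lambda\big)$ to the pooled block, while the prescription $\varsigma_i=\varsigma_{\eta+1}$ for $i\in[r+1,d]$ extends this constant value onto the zero-eigenvalue coordinates. I would then verify that $\bm y\mapsto G(\bm\sigma)+\langle\bm\varsigma,\bm y-\bm\sigma\rangle$ globally minorizes $\phi_\lambda$, which both certifies $\bm\varsigma\in\partial(\conv\phi_\lambda)(\bm\sigma)$ and furnishes the supporting hyperplane underlying the reverse inequality in (a). Here \Cref{lem:inv} is used to show that the supporting slope is insensitive to how ties among the eigenvalues and the padding zeros are resolved, so that $\bm\varsigma$ is well defined and valid for every eigenbasis $\bm Q$ of $\bm X$.

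The main obstacle, I expect, is the tightness direction of (a) together with the subgradient verification in (b): both hinge on the non-smooth ``pooling'' of the bottom eigenvalues and must be argued uniformly across eigenvalue multiplicities and zero eigenvalues. Concretely, the crux is to certify the majorization $(\sigma_{\eta+1},\dots,\sigma_d)\prec\bm\pi^\star$ from the single inequality characterizing $\eta$, and to confirm that the slope $\bm\varsigma$ (which is constant on the \emph{entire} block $[\eta+1,d]$, not merely on $[\eta+1,\tilde k]$) yields a global rather than merely local affine minorant. By contrast, the two AM--HM comparisons and the routine algebra are comparatively minor.
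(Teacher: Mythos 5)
Your proposal is correct in its overall architecture and shares the paper's backbone — reduce $\conv\Gamma$ to the convex envelope of a symmetric function of the eigenvalues via the spectral-commutation result — but the core computation is done by a genuinely different route. The paper's proof is essentially a change of variables plus citation: it introduces the shifted vector $\bm\sigma^+$ (adding $\lambda$ to the top $\tilde k$ eigenvalues), observes $\Gamma(\bm X;\lambda)=\gamma(\bm\sigma^+)$ with $\gamma(\bm\sigma^+)=\sum_{i\in[\tilde k]}1/\sigma_i^+$, invokes \citet[Theorem 8]{kim2022convexification} to commute the envelope with the eigenvalue map, imports the already-known envelope of $\gamma$ from \citet[Theorems 8 and 9]{li2024best}, and uses \Cref{lem:inv} (with $s=d$) only to show the pooling index of $\bm\sigma^+$ coincides with $\eta$; the subgradient in part (ii) is likewise obtained by citing \citet[Proposition 8]{li2024best} together with standard spectral subdifferential theory. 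You instead propose to re-derive the envelope from first principles: \Cref{ineq:amhm} for the underestimator direction and a majorization/Birkhoff argument (the tail of $\bm\sigma$ is majorized by the pooled vector $\bm\pi^\star$, hence lies in the convex hull of its permutations, on each of which $\phi_\lambda$ equals the candidate value) for tightness, plus a direct verification that the affine function with slope $\bm\varsigma$ globally minorizes $\phi_\lambda$. Both halves of your argument check out and are more self-contained and illuminating than the paper's citations. The one place you are on thinner ice is the convexity of the candidate $G$: the map $\bm\sigma\mapsto\bm\sigma^+$ is affine only on the monotone cone, and convexity of a symmetric function restricted to that cone does not by itself imply convexity of its symmetric extension, so "convex function composed with an affine lift" is not quite a proof; note also that at this point you are leaning on exactly the same identification ($G=\phi\circ\text{lift}$ with index matching via \Cref{lem:inv}) that constitutes the paper's entire argument. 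The paper's ordering — compute the envelope in the $\bm\sigma^+$ variable first and only then substitute — sidesteps the need to establish convexity of $G$ in the $\bm\sigma$ variable at all, which is the main thing its shorter route buys.
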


\begin{proof}{Proof.}
    See Appendix~\ref{proof:conv}. \qed
\end{proof}

The functions $\conv \Gamma$ and $\Phi$ from  \Cref{def:phi} share a similar form but differ notably in their domains and parameter settings.
\begin{enumerate}[(i)]
    \item Unlike $\Phi$, which is defined over $n\times n$ matrices and relies on the condition $k\le n$,  the function $\conv \Gamma$ builds on $d\times d$ matrices and  the integer $\tilde k\le d$;
    \item In addition, $\conv \Gamma$ includes a regularization parameter $\lambda>0$, which $\Phi$ does not have; and
    \item When using $\Phi$ to relax \ref{aed}, the resulting relaxation \ref{amesp-R} incorporates a constant term $(d-k)/\lambda$ in its objective function. If $d>k$, that term becomes negative, which may  result in a negative lower bound, as demonstrated in \Cref{cor:amesp_gap}. In contrast, the relaxation \ref{aed-R2}, which is based on $\conv \Gamma$, always  incorporates a nonnegative constant $(d-\tilde k)/\lambda$.  Hence,   $\conv \Gamma$ can prevent the relaxation from yielding negative lower bounds. More importantly, we later demonstrate that \ref{aed-R2}  dominates \ref{amesp-R}.
\end{enumerate}

We establish below a connection between the function $\conv \Gamma$ and the trace of the matrix inverse, which lays the foundation for our subsequent results.
\begin{lemma}\label{lem:gamma}
 If the rank of $\bm X$ is bounded by $\tilde k$, then we have
 $$\conv \Gamma (\bm X;\lambda)+ \frac{d-\tilde k}{\lambda}=\tr((\bm X+\lambda\bm I_d)^{-1}).$$
\end{lemma}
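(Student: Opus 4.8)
The plan is to reduce the claimed identity to the statement that the convex envelope $\conv\Gamma$ agrees with $\Gamma$ itself on the set of matrices of rank at most $\tilde k$, and then to finish with an elementary eigenvalue computation. Writing $\sigma_1 \ge \cdots \ge \sigma_d \ge 0$ for the eigenvalues of $\bm X$, the rank hypothesis $\rank(\bm X) \le \tilde k$ forces $\sigma_{\tilde k + 1} = \cdots = \sigma_d = 0$. Consequently $\tr((\bm X + \lambda \bm I_d)^{-1}) = \sum_{i\in[d]}\frac{1}{\sigma_i + \lambda} = \sum_{i\in[\tilde k]}\frac{1}{\sigma_i+\lambda} + \frac{d - \tilde k}{\lambda} = \Gamma(\bm X;\lambda) + \frac{d-\tilde k}{\lambda}$, using \Cref{def:gamma}. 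Thus it suffices to prove $\conv\Gamma(\bm X;\lambda) = \Gamma(\bm X;\lambda)$ whenever $\rank(\bm X) \le \tilde k$.

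The key step, and the only place where real work happens, is to understand the integer $\eta$ from \Cref{lem:eta} under the rank hypothesis. Because $\sigma_{\tilde k + 1} = \cdots = \sigma_d = 0$, the tail sum appearing in \Cref{lem:eta} truncates: $\sum_{i\in[\eta+1,d]}\sigma_i = \sum_{i\in[\eta+1,\tilde k]}\sigma_i$. The defining inequality of $\eta$ then reads $\frac{1}{\tilde k - \eta}\sum_{i\in[\eta+1,\tilde k]}\sigma_i \ge \sigma_{\eta+1}$. But the left-hand side is exactly the arithmetic mean of the $\tilde k - \eta$ numbers $\sigma_{\eta+1} \ge \cdots \ge \sigma_{\tilde k}$, hence is at most their largest entry $\sigma_{\eta+1}$. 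Sandwiching these two inequalities forces the mean to equal $\sigma_{\eta+1}$, which in turn forces $\sigma_{\eta+1} = \sigma_{\eta+2} = \cdots = \sigma_{\tilde k} =: c \ge 0$. I expect this sandwich --- turning the rank constraint into the statement that the ``averaged'' block of eigenvalues is constant --- to be the crux of the argument.

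With the tail block constant, the second term in the formula for $\conv\Gamma$ from \Cref{prop:conv} collapses: $\frac{(\tilde k - \eta)^2}{\sum_{i\in[\eta+1,d]}\sigma_i + (\tilde k-\eta)\lambda} = \frac{(\tilde k-\eta)^2}{(\tilde k - \eta)(c+\lambda)} = \frac{\tilde k - \eta}{c+\lambda} = \sum_{i\in[\eta+1,\tilde k]}\frac{1}{\sigma_i + \lambda}$, which is precisely the equality case in \Cref{ineq:amhm}. Substituting back gives $\conv\Gamma(\bm X;\lambda) = \sum_{i\in[\eta]}\frac{1}{\sigma_i+\lambda} + \sum_{i\in[\eta+1,\tilde k]}\frac{1}{\sigma_i+\lambda} = \sum_{i\in[\tilde k]}\frac{1}{\sigma_i+\lambda} = \Gamma(\bm X;\lambda)$. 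Combining with the trace identity from the first paragraph yields $\conv\Gamma(\bm X;\lambda) + \frac{d-\tilde k}{\lambda} = \tr((\bm X+\lambda\bm I_d)^{-1})$, as desired. No case split on whether $\rank(\bm X) < \tilde k$ or $\rank(\bm X) = \tilde k$ is needed, since the sandwich argument covers both uniformly (in the former case the constant $c$ is simply zero).
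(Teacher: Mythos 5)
Your proof is correct, and the final computation (the collapse of the averaged term in $\conv\Gamma$ once the tail block of eigenvalues is known to be constant) coincides with the paper's. Where you diverge is in how that constant-tail structure is established. The paper runs the argument forward: it splits into the cases $\rank(\bm X)=\tilde k$ and $\rank(\bm X)<\tilde k$, in each case explicitly names the candidate index $\eta$ (the start of the constant block among $\sigma_1,\dots,\sigma_{\tilde k}$ in the first case, the rank itself in the second), checks that it satisfies the defining inequalities of \Cref{lem:eta}, and then relies on uniqueness to conclude that this is the $\eta$ appearing in \Cref{prop:conv}. You run it backward: you take the $\eta$ of \Cref{lem:eta} as given and observe that under the rank hypothesis its defining inequality $\frac{1}{\tilde k-\eta}\sum_{i\in[\eta+1,d]}\sigma_i\ge\sigma_{\eta+1}$ pins the arithmetic mean of the nonincreasing block $\sigma_{\eta+1},\dots,\sigma_{\tilde k}$ against its maximum, forcing the block to be constant. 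This buys you a uniform, case-free argument and spares you from exhibiting $\eta$ explicitly or invoking its uniqueness; the paper's version, in exchange, identifies $\eta$ concretely (e.g., as the rank of $\bm X$ when that rank is below $\tilde k$), which is mildly informative but not needed for the identity. Both arguments are complete; yours is the leaner of the two.
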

\begin{proof}{Proof.}
    See Appendix~\ref{proof:gamma}. \qed
\end{proof}

Using \Cref{lem:gamma}, we are now ready to introduce a novel convex integer program for \ref{aed}.
\begin{theorem}\label{them:novel}
\ref{aed} is equivalent to the following convex integer program:
\begin{align}\label{eq:novel}
z_k:=\min_{\bm x \in \{0,1\}^n } \bigg\{ \conv\Gamma \bigg(\sum_{i\in [n]} x_i \bm a_i\bm a_i^{\top}; \lambda \bigg) + \frac{d-\tilde k}{\lambda}: \sum_{i\in [n]} x_i=k \bigg\}.
\end{align}
\end{theorem}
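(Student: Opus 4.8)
The plan is to show that the objective of \eqref{eq:novel} coincides, point-by-point, with the objective of the exact convex integer formulation \eqref{aed1} over their common feasible region, so that the two programs share the same optimal value $z_k$. Since \ref{aed} is already written as \eqref{aed1} and both \eqref{aed1} and \eqref{eq:novel} optimize over the same set $\{\bm x \in \{0,1\}^n : \sum_{i\in[n]} x_i = k\}$, it suffices to verify that for every feasible $\bm x$ we have
\[
\conv\Gamma\bigg(\sum_{i\in[n]} x_i \bm a_i \bm a_i^\top;\lambda\bigg) + \frac{d-\tilde k}{\lambda} = \tr\bigg(\bigg(\sum_{i\in[n]} x_i \bm a_i \bm a_i^\top + \lambda \bm I_d\bigg)^{-1}\bigg).
\]

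First I would fix a feasible binary $\bm x$ and set $\bm X := \sum_{i\in[n]} x_i \bm a_i \bm a_i^\top \in \S_+^d$. The crucial observation is a rank bound: $\bm X$ is a sum of exactly $k$ rank-one positive semidefinite matrices $\bm a_i \bm a_i^\top$, so $\rank(\bm X) \le k$; since $\bm X$ is $d\times d$ we also have $\rank(\bm X)\le d$, and hence $\rank(\bm X) \le \min\{k,d\} = \tilde k$. This is precisely the hypothesis required to invoke \Cref{lem:gamma}.

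Then I would apply \Cref{lem:gamma} to $\bm X$, which directly yields $\conv\Gamma(\bm X;\lambda) + (d-\tilde k)/\lambda = \tr((\bm X+\lambda\bm I_d)^{-1})$ — exactly the desired identity. Substituting back shows that the objective of \eqref{eq:novel} agrees with that of \eqref{aed1} at every feasible point, and taking the minimum over the common feasible region gives the claimed equality of optimal values.

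I do not anticipate a genuine obstacle here, since the heavy lifting has already been carried out in \Cref{prop:conv} and \Cref{lem:gamma}; the only point warranting care is confirming the rank bound $\rank(\bm X)\le\tilde k$, as this is exactly what lets us pass from the convex envelope $\conv\Gamma$ back to the original spectral function $\Gamma$, equivalently to the trace-inverse objective. As a consistency check I would note that this matches \Cref{lem:obj}: on binary feasible $\bm x$, $\Gamma(\bm X;\lambda)$ and $\conv\Gamma(\bm X;\lambda)$ must agree because $\conv\Gamma$ coincides with $\Gamma$ on the rank-$\tilde k$ locus, so the convexification is \emph{free} on integer points and only tightens the relaxation at fractional ones.
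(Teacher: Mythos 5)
Your proposal is correct and follows the same route as the paper's proof: fix a feasible binary $\bm x$, observe that $\sum_{i\in[n]} x_i \bm a_i\bm a_i^{\top}$ has rank at most $\tilde k$, and invoke \Cref{lem:gamma} to equate the objectives of \eqref{eq:novel} and \eqref{aed1} pointwise. The consistency check against \Cref{lem:obj} is a nice addition but not a different argument.
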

\begin{proof}{Proof.}
Let $\bm X= \sum_{i\in [n]} x_i \bm a_i\bm a_i^{\top}$.  For any binary vector $\bm x\in \{0,1\}^n$ with $\sum_{i\in [n]}x_i=k$, the rank of $\bm X$ must be bounded by $\tilde k$.  According to \Cref{lem:gamma},  the objective functions of \eqref{aed1} and \eqref{eq:novel} must be equal in this context. 
\qed
\end{proof}

\subsection{A Novel Convex Relaxation}
By relaxing the binary variables $\bm x$ of \eqref{eq:novel}, we therefore obtain a new lower bound of \ref{aed}:
\begin{align}\label{aed-R2}
z_k\ge {\nu}^{\text{N}}_k:=\min_{\bm x \in [0,1]^n } \bigg\{ \conv \Gamma \bigg(\sum_{i\in [n]} x_i \bm a_i\bm a_i^{\top}; \lambda \bigg) + \frac{d-\tilde k}{\lambda}: \sum_{i\in [n]} x_i=k \bigg\} .\tag{RAOD-RII}
\end{align}

By leveraging Part (ii) of \Cref{prop:conv}, we can readily obtain the (sub)gradient of the objective function of \ref{aed-R2}, which enables us to use first-order methods, such as the Frank-Wolfe algorithm, to solve \ref{aed-R2} efficiently.
\begin{remark}\label{remark:subgrad}
    For any $\bm x\in [0,1]^n$ with $\sum_{i\in [n]} x_i=k$,
    define the matrix $\bm X(\bm x)= \sum_{i\in [n]} x_i \bm a_i\bm a_i^{\top}$. Suppose $\bm W(\bm x) \in \S_+^n$ is a subgradient  of $\conv \Gamma \left(\bm X; \lambda \right)$  at $\bm X(\bm x)$, as introduced in \Cref{prop:conv}. Then, 
    the  vector $\bm g(\bm x) \in \Re_+^n$, defined by  
    \[g_i(\bm x) := \bm a_i^{\top} \bm W(\bm x) \bm a_i, \ \ \forall i\in [n],\]
is a subgradient of $\conv \Gamma (\sum_{i\in [n]} x_i \bm a_i\bm a_i^{\top}; \lambda )$ with respect to $\bm x$.  \qedA
\end{remark}

In the following, we compare the three convex relaxations of \ref{aed} and analyze their relationships by leveraging \Cref{lem:inv,lem:gamma} and a technical result from \citet[Lemma 1]{li2024augmented}.
\begin{theorem}[Comparison of convex relaxations]\label{them:compare}
\ref{aed-R2} outperforms both \ref{aed-R1} and \ref{amesp-R}. More precisely, the optimal values of these formulations satisfy
\begin{enumerate}[(i)]
\item ${\nu}^{\text{N}}_k = {\nu}^{\text{C}}_k$ for any $k\in [d,n]$;
\item ${\nu}^{\text{N}}_k \ge {\nu}^{\text{C}}_k$ for any $k\in [d-1]$; and
\item ${\nu}^{\text{N}}_k \ge {\nu}^{\text{M}}_k$ for any $k\in [n]$.
\end{enumerate}
\end{theorem}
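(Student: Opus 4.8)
The plan is to prove all three parts by establishing \emph{pointwise} inequalities between the three relaxation objectives, viewed as functions of the common variable $\bm x \in [0,1]^n$ with $\sum_{i\in[n]} x_i = k$, and then passing to the minimum over $\bm x$. Throughout, write $\bm X(\bm x) = \sum_{i\in[n]} x_i \bm a_i \bm a_i^\top$ with eigenvalues $\sigma_1 \ge \cdots \ge \sigma_d \ge 0$, and $\bm Y(\bm x) = \sum_{i\in[n]} x_i \bm h_i \bm h_i^\top = \bm H\Diag(\bm x)\bm H^\top$ with eigenvalues $\beta_1 \ge \cdots \ge \beta_n \ge 0$. Parts (i) and (ii), which compare \ref{aed-R2} with \ref{aed-R1}, will follow quickly from \Cref{lem:gamma} and the defining property of the convex envelope, while Part (iii) is the substantial one.

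For Part (i), $k \ge d$ forces $\tilde k = d$, so the additive constant $(d-\tilde k)/\lambda$ vanishes and every $\bm X \in \S_+^d$ has rank at most $\tilde k = d$; \Cref{lem:gamma} then gives $\conv\Gamma(\bm X;\lambda) = \tr((\bm X+\lambda\bm I_d)^{-1})$ for all such $\bm X$, so the objectives of \ref{aed-R2} and \ref{aed-R1} coincide identically and $\nu^{\text{N}}_k = \nu^{\text{C}}_k$. For Part (ii), where $k \le d-1$, I will show the pointwise bound $\tr((\bm X(\bm x)+\lambda\bm I_d)^{-1}) \le \conv\Gamma(\bm X(\bm x);\lambda) + (d-\tilde k)/\lambda$. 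This holds because, first, $\tr((\bm X+\lambda\bm I_d)^{-1}) = \sum_{i\in[d]} 1/(\sigma_i+\lambda) \le \sum_{i\in[\tilde k]} 1/(\sigma_i+\lambda) + (d-\tilde k)/\lambda = \Gamma(\bm X;\lambda) + (d-\tilde k)/\lambda$ using $1/(\sigma_i+\lambda)\le 1/\lambda$; and second, the left-hand side is convex in $\bm X$ and therefore lies below the convex envelope of $\Gamma(\cdot;\lambda) + (d-\tilde k)/\lambda$, which is exactly $\conv\Gamma(\cdot;\lambda) + (d-\tilde k)/\lambda$. Minimizing both sides over $\bm x$ yields $\nu^{\text{C}}_k \le \nu^{\text{N}}_k$.

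For Part (iii) I again target a pointwise inequality, namely $\conv\Gamma(\bm X(\bm x);\lambda) + (d-\tilde k)/\lambda \ge \Phi(\bm Y(\bm x)) + (d-k)/\lambda$, whose engine is a comparison of the two spectra. Since $\bm H\Diag(\bm x)\bm H^\top$ and $\Diag(\bm x)\bm C = \Diag(\bm x)(\bm A^\top\bm A + \lambda\bm I_n)$ share eigenvalues, one obtains the trace identity $\sum_{i\in[n]}\beta_i = \tr(\bm X(\bm x)) + \lambda k$ and, via Ky Fan's inequality together with $x_i \le 1$, the partial-sum bounds that yield the majorization $\bm\beta \prec \hat{\bm\sigma}$, where $\hat{\bm\sigma}\in\Re_+^n$ adds $\lambda$ to each of the top $\tilde k$ entries of $\bm\sigma$ and pads with $\lambda$'s (only when $k>d$) and zeros; this is the spectral relationship furnished by \citet[Lemma 1]{li2024augmented}. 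Using \Cref{lem:inv}, I rewrite $\conv\Gamma(\bm X;\lambda)+(d-\tilde k)/\lambda$ with the same breakpoint $\eta$ in terms of $\hat{\bm\sigma}$: when $k\le d$ one has $\hat{\bm\sigma}=\bm\sigma^+$ and this quantity equals $\phi(\bm\sigma^+) + (d-k)/\lambda$ exactly, so Schur-convexity of $\Phi$ (a symmetric convex spectral function) together with $\bm\beta\prec\bm\sigma^+$ gives $\Phi(\bm Y(\bm x))=\phi(\bm\beta)\le\phi(\bm\sigma^+)$, closing the inequality. When $k\ge d+1$ the two additive constants differ, and I bridge the gap with the AM--HM envelope bound $\phi(\hat{\bm\sigma}) \le \sum_{i\in[k]} 1/\hat\sigma_i$, which follows from \Cref{ineq:amhm}; since $\sum_{i\in[k]} 1/\hat\sigma_i = \conv\Gamma(\bm X;\lambda) + (k-d)/\lambda$ in this regime, rearranging gives the claim. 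Minimizing over $\bm x$ then produces $\nu^{\text{N}}_k \ge \nu^{\text{M}}_k$.

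The main obstacle is precisely this spectral comparison in Part (iii). The regularizer enters $\bm Y(\bm x)$ through the \emph{distributed} term $\lambda\Diag(\bm x)$ rather than as a uniform shift of the spectrum of $\bm X(\bm x)$, so relating $\bm\beta$ to $\bm\sigma$ demands the majorization argument above and a careful alignment of the breakpoints $\eta$ and $\iota$ of the two spectral formulas through \Cref{lem:inv}. The secondary difficulty is the bookkeeping across the regimes $k\le d$ and $k\ge d+1$, since the constants $(d-\tilde k)/\lambda$ and $(d-k)/\lambda$ agree only when $k\le d$ and the second regime additionally requires the envelope bound from \Cref{ineq:amhm} to absorb the mismatch.
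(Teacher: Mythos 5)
Your proposal is correct, and Parts (i) and (iii) follow essentially the paper's own route: Part (i) is the same appeal to \Cref{lem:gamma}, and Part (iii) uses the same construction (the vector the paper calls $\bm\beta^+$), the same majorization from \citet[Lemma 1]{li2024augmented}, Schur-convexity of $\phi$, and the breakpoint alignment via \Cref{lem:inv}; your only deviation there is in the $k\ge d+1$ regime, where you bound $\phi(\hat{\bm\sigma})\le\sum_{i\in[k]}1/\hat\sigma_i$ via \Cref{ineq:amhm} rather than identifying the breakpoint $\iota=r$ and computing $\phi(\bm\beta^+)$ exactly as the paper does --- both close the constant mismatch, and in fact your inequality is an equality here. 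The genuine difference is Part (ii). The paper constructs an explicit majorizing vector $\hat{\bm\sigma}$ from the envelope formula in \Cref{prop:conv} and invokes Schur-convexity of $\xi(\bm y)=\sum_i 1/(y_i+\lambda)$ to get the pointwise bound $\tr((\bm X+\lambda\bm I_d)^{-1})\le\conv\Gamma(\bm X;\lambda)+(d-k)/\lambda$. You instead observe that $\tr((\bm X+\lambda\bm I_d)^{-1})$ is a convex underestimator of $\Gamma(\bm X;\lambda)+(d-\tilde k)/\lambda$ on $\S_+^d$ (via $1/(\sigma_i+\lambda)\le 1/\lambda$ for the trailing eigenvalues) and hence lies below its convex envelope by the universal property of the envelope. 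This is shorter, needs no explicit formula for $\conv\Gamma$, no majorization, and no Schur-convexity; what it buys is a cleaner conceptual explanation of \emph{why} the new relaxation dominates the conventional one (the conventional objective is just one particular convex underestimator of $\Gamma+ (d-\tilde k)/\lambda$, while $\conv\Gamma + (d-\tilde k)/\lambda$ is the tightest). What the paper's computation buys in exchange is the explicit majorization machinery that it then reuses verbatim in Part (iii), where your softer argument would not suffice because $\Phi$ lives on a different ($n\times n$) spectrum.
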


\begin{proof}{Proof.}
    See Appendix~\ref{proof:compare}. \qed
\end{proof}

Based on  \Cref{them:compare}, we note that  \ref{aed-R2}
offers several key advantages:
\begin{enumerate}[(i)]
    \item It provides a stronger lower bound than \ref{aed-R1} and \ref{amesp-R}, effectively combining their strengths. Thus, we can directly combine \Cref{cor:gap,cor:amesp_gap} to guarantee  theoretical gaps of \ref{aed-R2} for all $k\in [n]$, as summarized in \Cref{them:gap};
    \item For $k\in [d, n]$, Parts (i) and (iii) of \Cref{them:compare} imply  that \ref{aed-R1} outperforms \ref{amesp-R}. This allows us to apply \Cref{cor:amesp_gap} to refine the gap of \ref{aed-R1} at $k=d$ in \Cref{them:relgap}; and
\item It mitigates the computational limitations of \ref{amesp-R} by operating in a $d\times d$ matrix space, achieving similar  efficiency as \ref{aed-R1}.
\end{enumerate}

\begin{corollary}[Optimality gap] \label{them:gap}
\ref{aed-R2} admits the following optimality gap
\begin{align*}
&1\le \frac{z_k}{\nu_k^{\text{N}}} \le \min\left\{\frac{d-1}{d-k}, n-k+1\right\},  \ \ \forall k\in [d-1], \quad 1\le \frac{z_k}{\nu_k^{\text{N}}} \le \min\left\{d, n-d+1\right\}, \ \ \text{if } k=d,\\
& \text{and}  \quad 1\le \frac{z_k}{\nu_k^{\text{N}}} \le \frac{n-d+1}{k-d+1}, \ \ \forall k\in [d+1,n].
\end{align*}
\end{corollary}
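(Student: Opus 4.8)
The plan is to derive this corollary directly from the comparison result \Cref{them:compare} together with the two previously established gap theorems, \Cref{cor:gap} for \ref{aed-R1} and \Cref{cor:amesp_gap} for \ref{amesp-R}; no new analytic estimate is required. The lower bound $1\le z_k/\nu_k^{\text{N}}$ holds for every $k\in[n]$ simply because \ref{aed-R2} is a continuous relaxation of \ref{aed}, i.e. $z_k\ge\nu_k^{\text{N}}$, and both quantities are positive (the constant $(d-\tilde k)/\lambda\ge 0$ together with the nonnegative spectral term keeps $\nu_k^{\text{N}}>0$, while $z_k>0$). For the upper bounds I would first record that every denominator appearing below is strictly positive, so that each domination inequality among the $\nu$'s can be inverted into the corresponding inequality among the ratios $z_k/\nu$.

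The three regimes are then handled by routing each known gap to \ref{aed-R2} through the appropriate part of \Cref{them:compare}. For $k\in[d+1,n]$, Part (i) gives the exact identity $\nu_k^{\text{N}}=\nu_k^{\text{C}}$, so $z_k/\nu_k^{\text{N}}=z_k/\nu_k^{\text{C}}\le(n-d+1)/(k-d+1)$ is inherited verbatim from Part (i) of \Cref{cor:gap}. For $k\in[d-1]$, Part (iii) gives $\nu_k^{\text{N}}\ge\nu_k^{\text{M}}>0$; since $z_k>0$, inverting the denominator yields $z_k/\nu_k^{\text{N}}\le z_k/\nu_k^{\text{M}}$, and Part (i) of \Cref{cor:amesp_gap} then supplies the bound $\min\{(d-1)/(d-k),\,n-k+1\}$.

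The one step that invokes both directions of domination, and is therefore the crux, is the boundary case $k=d$. Here Part (i) of \Cref{them:compare} combined with \Cref{cor:gap} alone only delivers $n-d+1$, whereas the claimed value is the possibly smaller quantity $\min\{d,\,n-d+1\}$. To recover the factor $d$, I would additionally invoke Part (iii) at $k=d$, giving $\nu_d^{\text{N}}\ge\nu_d^{\text{M}}>0$, and combine it with \Cref{cor:amesp_gap} evaluated at $k=d$, which yields $z_d/\nu_d^{\text{M}}\le\min\{d,\,n-d+1\}$; inverting the denominator produces $z_d/\nu_d^{\text{N}}\le\min\{d,\,n-d+1\}$. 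Taking the better of the two available estimates gives exactly the stated value, which is precisely the refinement of the \ref{aed-R1} gap at $k=d$ anticipated in the discussion following \Cref{them:compare}. The only care needed throughout is to confirm positivity of $\nu_k^{\text{M}}$ and $\nu_k^{\text{C}}$ before flipping any ratio, and this follows because each already admits a finite upper gap against the strictly positive $z_k$.
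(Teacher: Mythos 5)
Your proposal is correct and follows essentially the same route as the paper: the corollary is obtained by combining the domination results of \Cref{them:compare} with the gap bounds of \Cref{cor:gap} and \Cref{cor:amesp_gap}, exactly as the text preceding \Cref{them:gap} indicates. Your additional care about positivity of the denominators before inverting the inequalities, and your observation that the $k=d$ case needs the \ref{amesp-R} route to recover the factor $d$, are both sound (indeed at $k=d$ the \ref{amesp-R} route alone already yields $\min\{d,\,n-d+1\}$).
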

To better understand the performance of \ref{aed-R2}, we illustrate its optimality gaps in \Cref{fig_theorygap}. We see that the gap reaches its maximum when $k$ approaches $d$ from either side and tends to decrease as $d$ increases for small values of $k$. 
\begin{figure}[h]
	\centering
\subfigure[$n=30$]{
\includegraphics[width=0.33\textwidth,height=0.25\textwidth]{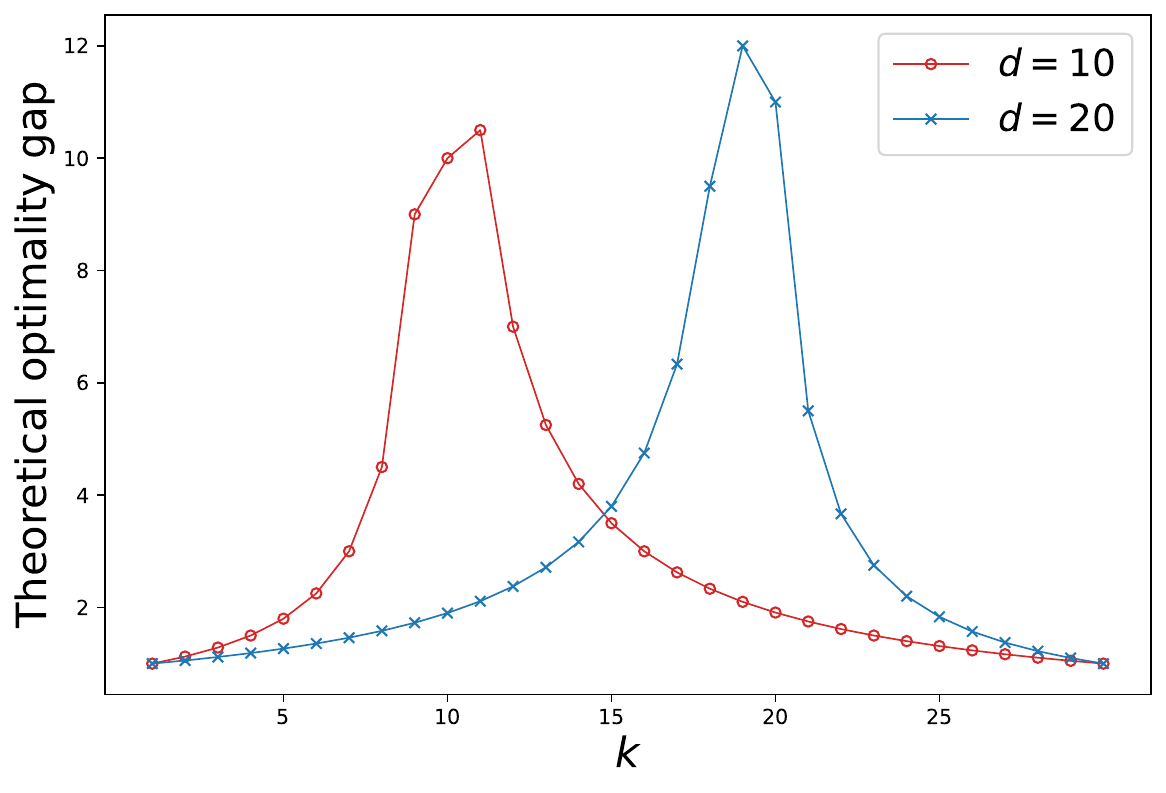}}
    \hspace{4em}
    \subfigure[$n=100$]{
\includegraphics[width=0.33\textwidth,height=0.25\textwidth]{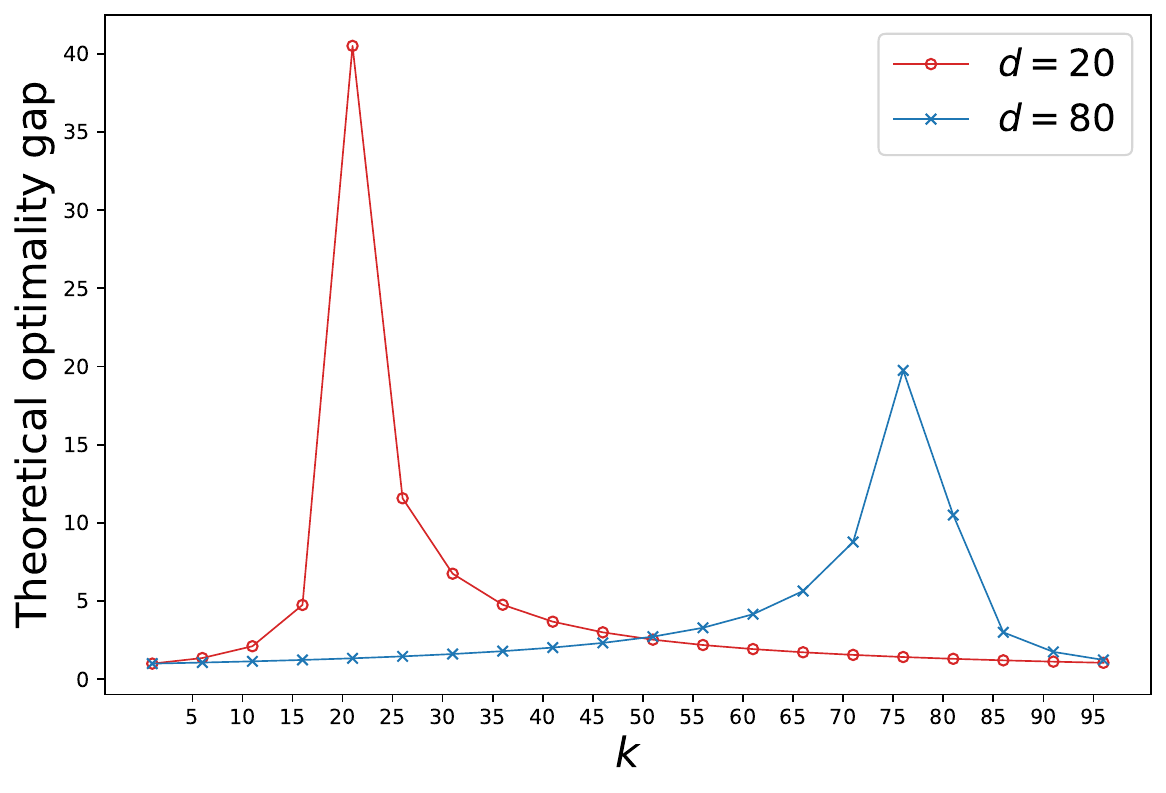}}
	\caption{The optimality gaps of \ref{aed-R2} in \Cref{them:gap} }\label{fig_theorygap}
\end{figure}

\begin{corollary}[Optimality gap] \label{them:relgap}
When $k=d$, \ref{aed-R1} admits the following optimality gap
\begin{align*}
1\le \frac{z_k}{\nu_k^{\text{C}}} \le \min\left\{d, n-d+1\right\}.
\end{align*}
\end{corollary}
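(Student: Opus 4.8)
The plan is to derive both inequalities by stitching together results already established for the three relaxations, evaluated at the single point $k=d$, rather than re-deriving anything from scratch. The lower bound $1\le z_d/\nu_d^{\text{C}}$ is immediate: $\nu_d^{\text{C}}$ is the optimal value of \ref{aed-R1}, which is a continuous relaxation of \eqref{aed1} at $k=d$ and hence a valid lower bound on $z_d$; since $\nu_d^{\text{C}}$ is always positive, the ratio is at least one.

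For the upper bound, I would produce two separate estimates and take their minimum. First, \Cref{cor:gap} evaluated at $k=d$ gives directly $z_d/\nu_d^{\text{C}}\le (n-d+1)/(d-d+1)=n-d+1$. Second, I would route through \ref{amesp-R}: by Part (i) of \Cref{them:compare} we have $\nu_d^{\text{N}}=\nu_d^{\text{C}}$, and by Part (iii) we have $\nu_d^{\text{N}}\ge\nu_d^{\text{M}}$, so that $\nu_d^{\text{C}}\ge\nu_d^{\text{M}}$. Since both denominators are positive, this yields $z_d/\nu_d^{\text{C}}\le z_d/\nu_d^{\text{M}}$, and then Part (i) of \Cref{cor:amesp_gap} at $k=d$ bounds the right-hand side by $\min\{d,\,n-d+1\}$. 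Combining the two estimates gives $z_d/\nu_d^{\text{C}}\le\min\{n-d+1,\,d\}$, which is exactly the claimed gap.

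The only point requiring care is the positivity of $\nu_d^{\text{M}}$, needed to preserve the inequality direction when passing from $\nu_d^{\text{C}}\ge\nu_d^{\text{M}}$ to $z_d/\nu_d^{\text{C}}\le z_d/\nu_d^{\text{M}}$. At $k=d$ the additive constant $(d-k)/\lambda$ in \ref{amesp-R} vanishes, so $\nu_d^{\text{M}}$ reduces to the minimum of the spectral function $\Phi$ over the feasible region; since $\Phi$ is a sum of reciprocals of (sums of) eigenvalues of a positive definite matrix, it is strictly positive, whence $\nu_d^{\text{M}}>0$. I expect no genuine obstacle here, as this statement is essentially a corollary that harvests \Cref{cor:gap}, \Cref{cor:amesp_gap}, and \Cref{them:compare} at one value of $k$; the main task is simply to confirm that the chain of inequalities composes in the correct direction and that the denominators do not vanish.
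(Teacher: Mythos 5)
Your proposal is correct and follows essentially the same route the paper intends: the text preceding \Cref{them:relgap} explicitly states that Parts (i) and (iii) of \Cref{them:compare} give $\nu_d^{\text{C}}=\nu_d^{\text{N}}\ge\nu_d^{\text{M}}$, so that \Cref{cor:amesp_gap} at $k=d$ transfers the bound $\min\{d,\,n-d+1\}$ to \ref{aed-R1}. Your extra care about the positivity of $\nu_d^{\text{M}}$ (already implicit in the statement of \Cref{cor:amesp_gap}) and the redundant first estimate via \Cref{cor:gap} are harmless.
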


\subsection{The Cutting-Plane Algorithm}\label{subsec:cut}
Solving \ref{aed} to optimality is computationally expensive. To mitigate this challenge, this subsection presents an exact algorithm designed to enhance existing B\&B methods, which typically use the classic lower bound from \ref{aed-R1} (see, e.g., \citealt{hendrych2023solving}). Instead, our approach leverages the formulation \eqref{eq:novel} with a stronger relaxation, \ref{aed-R2}. Motivated by the convexity of the function $\conv\Gamma$,  we can recast \eqref{eq:novel} as a mixed-integer linear programming (MILP) formulation \eqref{eq:novel-milp}. This reformulation is a well-established technique for mixed-integer nonlinear optimization problems (\citealt{duran1986outer,quesada1992lp}). 
\begin{equation}\label{eq:novel-milp}
\begin{aligned}
z_k=\min_{
        w\in \Re^+, \bm x\in \{0,1\}^n
  } \bigg\{ w:    \sum_{i\in [n]} x_i=k, w \ge &\conv\Gamma \bigg(\sum_{i\in [n]} y_i \bm a_i\bm a_i^{\top};\lambda\bigg) + \frac{d-\tilde k}{\lambda}\\
    &+ \bm g(\bm y)^{\top}\left(\bm x-\bm y\right), \ \ \forall \bm y\in \{0,1\}^n, \sum_{i\in [n]} y_i=k \bigg\},
    \end{aligned}
\end{equation}
where $\bm g(\bm y)\in \Re^n$ denotes a (sub)gradient of $\conv\Gamma (\sum_{i\in [n]} y_i \bm a_i\bm a_i^{\top};\lambda)$ at $\bm y$, as defined in \Cref{remark:subgrad}.

However, the problem \eqref{eq:novel-milp}  incorporates an exponentially large number of linear inequalities, which causes computational challenges. To  address this, we design a customized cutting plane algorithm for \eqref{eq:novel-milp} that iteratively adds linear inequalities.
The pseudo-code is provided in \Cref{algo:cut}.
This outer-approximation scheme, as introduced in the seminal work of \cite{duran1986outer}, was proven to converge optimally for solving MINLPs.

\begin{algorithm}[ht]
	\caption{The cutting-plane algorithm for \eqref{eq:novel-milp}} \label{algo:cut}
	\begin{algorithmic}
		\State \textbf{Input:} Vectors $\{\bm{a}_i \in \Re^d\}_{i \in [n]}$, a constant $\lambda>0$, an integer $k\in [n]$, and target optimality $\epsilon>0$
        \State Initialize $t=1$,  $\bm x^t\in \{0,1\}^n$ with $\sum_{i\in [n]}x^t_i=k$ and $\mbox{UB}=\conv\Gamma \left(\sum_{i\in [n]} x^{t}_i \bm a_i\bm a_i^{\top}; \lambda\right) + \frac{d-\tilde k}{\lambda}$
		\Repeat
        \State  Solve the following optimization problem, where $(w^{t+1}, \bm x^{t+1})$ denotes an optimal solution
        \begin{align*}
    \min_{\begin{subarray}{c}
        w\in \Re_+, \bm x\in \{0,1\}^n\\
        \sum_{i\in [n]} x_i=k
    \end{subarray}} \bigg\{ w: w \ge \conv\Gamma \bigg(\sum_{i\in [n]} x^{\ell}_i \bm a_i\bm a_i^{\top}; \lambda\bigg) + \frac{d-\tilde k}{\lambda}+ \bm g\left(\bm x^{\ell}\right)^{\top}\left(\bm x-\bm x^{\ell}\right),\forall \ell \in [t]   \bigg\}     
        \end{align*}
        \State Update $\mbox{UB}=\min\left\{ \mbox{UB},\conv\Gamma \left(\sum_{i\in [n]} x^{t+1}_i \bm a_i\bm a_i^{\top};\lambda\right) + \frac{d-\tilde k}{\lambda}\right\}$
        \State Update $t=t+1$
        \Until{$\left(w^t-\mbox{UB}\right)/\mbox{UB}\le \epsilon$}
		\State \textbf{Output:}  $\bm x^t$ and $w^t$%
	\end{algorithmic}
\end{algorithm}

At each iteration of \Cref{algo:cut}, a MILP problem over $(w, \bm x)$, called the \textit{master} problem, 
is solved. This subproblem of \eqref{eq:novel-milp} is based on a subset of 
linear inequalities. 
As the number of linear cuts increases, the solution values of $w^t$ form a non-decreasing sequence of lower bounds for \ref{aed}. \Cref{algo:cut} terminates when  the relative gap between this lower bound (i.e., $w^t$) and the best-known upper bound (i.e., $\mbox{UB}$) reaches a fixed tolerance $\epsilon$. We choose $\epsilon=10^{-4}$ in our experiments. According to \cite{bertsimas2020rejoinder}, \Cref{algo:cut} can be efficiently implemented in advanced MILP packages (e.g., CPLEX, Gurobi) with the use of \textit{lazy} constraints and \textit{callback} functions. However, a time limit is necessary to impose on \Cref{algo:cut}, especially when solving large-sized instances. 
Indeed, as often
in mixed-integer optimization, the cutting plane algorithm can quickly find a high-quality solution, but proving its optimality can take much longer. In our experiments, we set a time limit of one hour for \Cref{algo:cut}.

We employ  several strategies to accelerate the computation of \Cref{algo:cut} in practice. First, we enhance the algorithm  using a high-quality warm start obtained from our approximation algorithms in \Cref{sec:approx}, instead of randomly initializing a feasible solution.
Second, a variable-fixing technique, proposed by \cite{li2024augmented}, is applied to identify  some binary variables in \eqref{eq:novel} that take values of 1 or 0 at optimality. 
Fixing these variables can reduce the problem size of \ref{aed} while preserving the optimal solution. The effectiveness of this variable-fixing process depends on the strength of the convex relaxations used. Notably, 
 the use of \ref{aed-R2} allows for the fixing of more binary variables.
Finally, we solve \ref{aed-R2} before running  \Cref{algo:cut} to  obtain an initial lower bound. This helps reduce the root gap.

\section{The Forward and Backward Greedy Algorithms}\label{sec:approx}
In this section, we investigate the scalable forward and backward greedy search strategies and derive their approximation ratios tailored for two different ranges of $k$ in \ref{aed}, specifically for $k\in [d-1]$ and $k\in[d,n]$, respectively.

The forward greedy algorithm   begins with an empty set and then iteratively selects the best new data point to add to the set. This process continues until the set reaches a size of $k$. Using the Sherman–Morrison formula,  \cite{chamon2017greedy} designed an efficient implementation of forward greedy for solving \ref{aed}, as detailed in \Cref{algo:forward}.

\begin{algorithm}[ht]
	\caption{The forward greedy algorithm for \ref{aed} \citep{chamon2017approximate}} \label{algo:forward}
	\begin{algorithmic}
		\State \textbf{Input:} Vectors $\{\bm{a}_i\}_{i \in [n]}  \in \Re^d$, a constant $\lambda>0$, and an integer $k\in [n]$
        \State Initialize $S_F=\emptyset$ and $\bm \Lambda = \frac{1}{\lambda}\bm I_d$

		\For{$j\in [k]$}
        \State  Compute $i^*= \argmax_{i \in [n]\setminus S_F} \frac{\bm a_i^{\top} \bm \Lambda \bm \Lambda \bm a_i }{1 + \bm a_i^{\top}  \bm \Lambda \bm a_i}$ 
        \State Update $S_F = S_F \cup \{i^*\} $ and $\bm \Lambda=\bm \Lambda -  \frac{ \bm \Lambda  \bm a_{i^*} \bm a_{i^*}^{\top} \bm \Lambda }{1 + \bm a_{i^*}^{\top}  \bm \Lambda \bm a_{i^*}}$
        \EndFor
		\State \textbf{Output:} $S_F$%
	\end{algorithmic}
\end{algorithm}

The backward greedy algorithm, in contrast to the forward greedy approach, starts with a solution of selecting all data points and then removes an element that results in the smallest objective value at each iteration. The procedure repeats until only $k$ data points are left, which is presented in \Cref{algo:backward}. Specifically, at each iteration of \Cref{algo:backward}, we have that $\bm \Lambda = (\sum_{j\in S_B}  \bm a_j\bm a_j^{\top}+ \lambda \bm I_d)^{-1}$,  and the goal is to solve
\begin{align*}
 \min_{i\in S_B} f(S_B\setminus\{i\})  = \min_{i\in S_B} \tr\left(\bm \Lambda + \frac{\bm \Lambda\bm a_i\bm a_i^{\top} \bm \Lambda}{1-\bm a_i^{\top}\bm \Lambda \bm a_i} \right)= f(S_B) + \min_{i\in S_B}  \frac{\bm a_i^{\top} \bm \Lambda \bm \Lambda\bm a_i}{1-\bm a_i^{\top}\bm \Lambda \bm a_i},
\end{align*}
where the first equation results from the Sherman–Morrison formula.
Similarly, we update  the matrix $\bm \Lambda $ by the Sherman–Morrison formula.

\begin{algorithm}[ht]
	\caption{The backward greedy algorithm for \ref{aed}} \label{algo:backward}
	\begin{algorithmic}
		\State \textbf{Input:} Vectors $\{\bm{a}_i\}_{i \in [n]}  \in \Re^d$, a constant $\lambda>0$, and an integer $k\in [n]$
 \State Initialize $S_B=[n]$ and $\bm \Lambda = (\sum_{j\in S_B}  \bm a_j\bm a_j^{\top}+ \lambda \bm I_d)^{-1}$
		\For{$j\in [n-k]$}
        \State  Compute $i^*= \argmin_{i \in S_B} \frac{\bm a_i^{\top} \bm \Lambda \bm \Lambda \bm a_i }{1 - \bm a_i^{\top}  \bm \Lambda \bm a_i}$ 
        \State Update $S_B = S_B \setminus \{i^*\} $ and $\bm \Lambda=\bm \Lambda + \frac{ \bm \Lambda  \bm a_{i^*} \bm a_{i^*}^{\top} \bm \Lambda }{1 - \bm a_{i^*}^{\top}  \bm \Lambda \bm a_{i^*}}$
        \EndFor
		\State \textbf{Output:} $S_B$%
	\end{algorithmic}
\end{algorithm}

Using \Cref{lem:techniques}, we guarantee the theoretical performance for \Cref{algo:forward,algo:backward}.

\begin{theorem}[Approximation ratios and complexities]\label{them:approx}
For $k\in [d-1]$, \Cref{algo:forward} enjoys a-$\frac{d-1}{d-k}$ approximation ratio, that is,
\[f(S_F) \le \frac{d-1}{d-k} z_k. \]
For $k\in [d,n]$, \Cref{algo:backward} enjoys a-$\frac{n-d+1}{k-d+1}$ approximation ratio, that is,
\[f(S_B) \le \frac{n-d-1}{k-d+1} z_k. \]
In addition, the running time complexities of \Cref{algo:forward,algo:backward} are $\O(knd^2)$ and $\O((n-k)nd^2)$, respectively.
\end{theorem}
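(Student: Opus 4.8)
The plan is to prove the two approximation ratios separately, since each relies on a different case of \Cref{lem:techniques}, and then bound the complexities by counting the dominant operations per iteration.

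\textbf{Forward greedy, $k\in[d-1]$.} I would argue by a telescoping comparison between successive iterates and the optimal value $z_k$. The key is that \Cref{algo:forward} adds, at each step, the element $i^*$ that maximizes $\bm a_i^{\top}\bm\Lambda\bm\Lambda\bm a_i/(1+\bm a_i^{\top}\bm\Lambda\bm a_i)$; by the Sherman--Morrison identity this is exactly the element whose \emph{addition} decreases the objective the most, i.e.\ the greedy choice minimizes $f(S\cup\{i\})$ over $i$. The trouble is that \Cref{lem:techniques} bounds the best \emph{removal}, $\min_{i\in S}f(S\setminus\{i\})$, not the best addition. The natural fix is to invoke \Cref{lem:techniques} in the contrapositive spirit: for a current greedy set $S_j$ of size $j\le d-1$, apply the $s\in[d-1]$ branch of the lemma to obtain that some element can be removed increasing $f$ by at most a factor $1+1/(s\rho+d-s)$, which equivalently means adding a well-chosen element to $S_{j-1}$ decreases $f$ by at least that factor. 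Since the greedy addition is at least as good as this particular element, I would chain the per-step multiplicative decreases $\prod_{j=1}^{k}\bigl(1+\frac{1}{j\rho+d-j}\bigr)^{-1}$ against $f(\emptyset)=d/\lambda$, and then separately relate $z_k$ to $f(\emptyset)$ through the same lemma applied to an optimal set, so that the $\rho$-dependent and $\lambda$-dependent factors cancel and leave the clean data-independent bound $\frac{d-1}{d-k}$. I expect the arithmetic of telescoping the products $\prod_j(j\rho+d-j+1)/(j\rho+d-j)$ and showing the $\rho\in(0,1)$ terms collapse to $(d-1)/(d-k)$ in the worst case to be the main obstacle; the worst case should occur as $\rho\to 1$, where the product telescopes to a ratio of consecutive integers.

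\textbf{Backward greedy, $k\in[d,n]$.} Here the structure matches \Cref{lem:techniques} directly, since the algorithm performs exactly the minimizing \emph{removal} that the lemma bounds. Starting from $S_B=[n]$ with $f([n])$, at each iteration the set has size $s\in[k,n]$, so I would apply the $s\in[d,n]$ branch to get $f(S_B\setminus\{i^*\})=\min_{i\in S_B}f(S_B\setminus\{i\})\le\bigl(1+\frac{1}{d\rho+s-d}\bigr)f(S_B)$. Telescoping over the $n-k$ removals, $f(S_B)\le f([n])\prod_{s=k+1}^{n}\bigl(1+\frac{1}{d\rho+s-d}\bigr)$. To convert $f([n])$ into $z_k$, I would again use \Cref{lem:techniques} (or monotonicity from \Cref{remark:mono}) to relate the full set's objective to that of an optimal size-$k$ set, so that the $\rho$- and $\lambda$-dependent pieces cancel. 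The worst case ($\rho\to1$) should telescope the product $\prod_{s=k+1}^{n}\frac{s-d+1}{s-d}$ into $\frac{n-d+1}{k-d+1}$, matching the claimed ratio. (I note the statement writes $\frac{n-d-1}{k-d+1}$ in the displayed inequality, which I read as a typo for $\frac{n-d+1}{k-d+1}$ consistent with \Cref{table:approx} and the theorem's preamble.)

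\textbf{Complexities.} Finally I would count work per iteration. For \Cref{algo:forward}, each of the $k$ iterations scans all $i\in[n]\setminus S_F$, and each scan evaluates a quadratic form $\bm a_i^{\top}\bm\Lambda\bm a_i$ and $\bm a_i^{\top}\bm\Lambda\bm\Lambda\bm a_i$ in $\O(d^2)$ time, with the rank-one update of $\bm\Lambda$ also $\O(d^2)$; this gives $\O(knd^2)$. For \Cref{algo:backward}, there are $n-k$ iterations, each scanning up to $n$ candidates at $\O(d^2)$ cost per candidate plus an $\O(d^2)$ update, yielding $\O((n-k)nd^2)$; the one-time initial inversion of $\sum_{j\in[n]}\bm a_j\bm a_j^{\top}+\lambda\bm I_d$ costs $\O(nd^2+d^3)$ and is absorbed. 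The only subtlety is confirming that maintaining $\bm\Lambda$ incrementally via Sherman--Morrison (rather than recomputing an inverse) keeps each update at $\O(d^2)$, which the algorithm's update rules already encode. I would defer the detailed telescoping computations to the appendix and present here only the reduction to \Cref{lem:techniques} and the per-iteration cost accounting.
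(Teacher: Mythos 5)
Your backward-greedy argument and your complexity accounting coincide with the paper's proof: telescope the removal bound of \Cref{lem:techniques} over the $n-k$ iterations starting from $f([n])=z_n$, use $z_n\le z_k$ (\Cref{remark:mono}), and drop the positive $d\rho$ term to obtain $\frac{n-d+1}{k-d+1}$. (One small correction there: the clean bound comes from $\rho\to 0$, not $\rho\to 1$; since $1+\frac{1}{d\rho+s-d}\le 1+\frac{1}{s-d}$ for $\rho>0$, the product $\prod_{s=k+1}^{n}\frac{s-d+1}{s-d}$ is the $\rho\to 0$ limit. At $\rho\to 1$ the factors become $\frac{s+1}{s}$ and the product would telescope to $\frac{n+1}{k+1}$, which is not the claimed ratio.) You are also right that the displayed $\frac{n-d-1}{k-d+1}$ is a typo for $\frac{n-d+1}{k-d+1}$.

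The forward-greedy part of your plan has a genuine gap. You want to convert the removal bound of \Cref{lem:techniques} into a per-step guarantee that \emph{adding} a well-chosen element to the greedy set $S_{j-1}$ decreases $f$ by at least a factor $\bigl(1+\frac{1}{s\rho+d-s}\bigr)^{-1}$. The lemma does not give this, for two reasons. First, the inequality runs the wrong way: applied to a set $T=S_{j-1}\cup\{i\}$ it yields $f(T)\ge \frac{1}{1+\epsilon}\,f(T\setminus\{i\})$, which is an \emph{upper} bound on how much the addition can help, not a lower bound. Second, the lemma only guarantees that \emph{some} element of $T$ is cheaply removable; that element need not be the one outside $S_{j-1}$, so you cannot conclude anything about additions to $S_{j-1}$. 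Chaining such (nonexistent) per-step decreases against $f(\emptyset)=d/\lambda$ also cannot produce the stated constant: the only generic relation is $z_k\ge\frac{d-k}{\lambda}$, which would give at best $\frac{d}{d-k}$, not $\frac{d-1}{d-k}$. The paper avoids the greedy trajectory entirely: it peels the \emph{optimal} set $S^*$ down from size $k$ to a singleton via \Cref{lem:techniques}, obtaining $\min_{i\in S^*}f(\{i\})\le\prod_{s=2}^{k}\frac{d-s+1}{d-s}\,z_k=\frac{d-1}{d-k}\,z_k$; then it observes that the first greedy pick minimizes $f(\{i\})$ over all of $[n]$ (by Sherman--Morrison with $\bm\Lambda=\frac{1}{\lambda}\bm I_d$), so $f(\{i^*\})\le\min_{i\in S^*}f(\{i\})$, and monotonicity of $f$ gives $f(S_F)\le f(\{i^*\})$. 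Only the first greedy step is analyzed; the remaining $k-1$ steps are handled by monotonicity alone. You should replace your per-step analysis with this peel-the-optimum argument.
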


\begin{proof}{Proof.}
    See Appendix~\ref{proof:approx}. \qed
\end{proof}

We remark on \Cref{them:approx} that
\begin{enumerate}[(i)]
    \item The forward greedy \Cref{algo:forward} has been extensively studied in the Bayesian AOD literature  (see, \citealt{chamon2017approximate,bian2017guarantees}). 
\Cref{them:approx} offers the first-known data-independent approximation ratio for \Cref{algo:forward} when applied to \ref{aed} with $k\in [d-1]$;
    \item We develop the novel backward greedy \Cref{algo:backward} that delivers an approximation ratio when $k\in [d,n]$;
    \item 
As a classical intuition,  forward greedy is effective when selecting a small number of data points, while backward greedy becomes more efficient as the selection size grows.  \Cref{them:approx} rigorously validates this intuition, as it offers theoretical guarantees for \Cref{algo:forward,algo:backward} in the small- and large-$k$ ranges, respectively, as illustrated in \Cref{fig_approx}; and 
    \item Finally, we consider \Cref{algo:forward_backward} to select the  better solution between the outputs of \Cref{algo:forward,algo:backward}, which yields a high-quality solution to \ref{aed} for all values of $k$.
\end{enumerate}

\begin{figure}[h]
	\centering
\subfigure[$d=10$]{
\includegraphics[width=0.36\textwidth,height=0.25\textwidth]{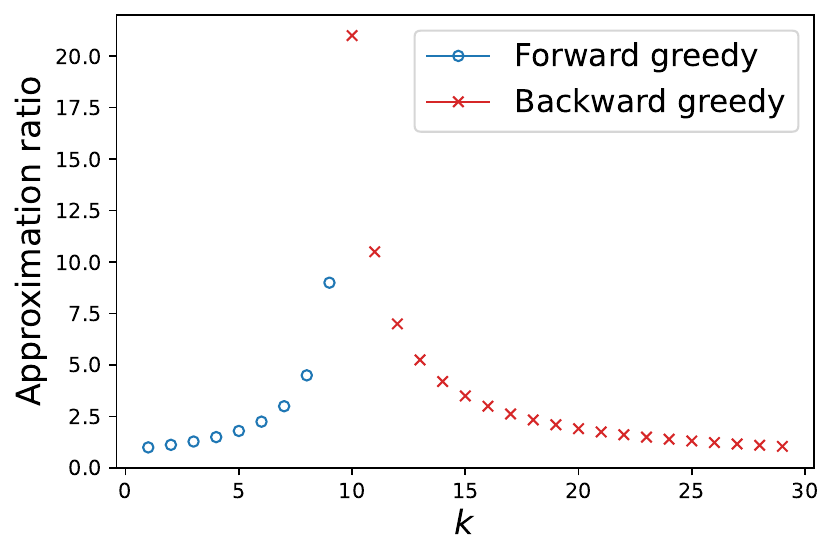}}
    \hspace{3em}
    \subfigure[$d=20$]{
\includegraphics[width=0.36\textwidth,height=0.25\textwidth]{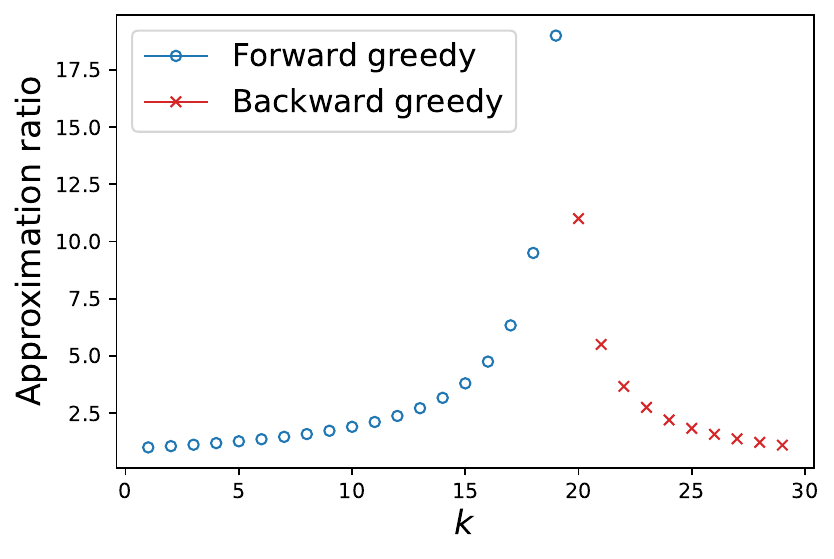}}
	\caption{The approximation ratios of \Cref{algo:forward,algo:backward} in \Cref{them:approx} with $n=30$ }\label{fig_approx}
\end{figure}

\begin{algorithm}[ht]
	\caption{The combination of the forward and backward greedy algorithms for \ref{aed}} \label{algo:forward_backward}
	\begin{algorithmic}
		\State \textbf{Input:} Vectors $\{\bm{a}_i\}_{i \in [n]}  \in \Re^d$, a constant $\lambda>0$, and an integer $k\in [n]$
 \State Denote by $S_F$ the output of \Cref{algo:forward} 
 \State Denote by $S_B$ the output of \Cref{algo:backward} 		
		\State \textbf{Output:} The better solution between $S_F$ and $S_B$%
	\end{algorithmic}
\end{algorithm}

\begin{corollary}\label{cor:approx}
\Cref{algo:forward_backward} has an approximation ratio of $\frac{d-1}{d-k}$ for $k\in [d-1]$,
and $\frac{n-d+1}{k-d+1}$ for $k\in [d,n]$.
In addition, its  time complexity  is $\O(n^2d^2)$.
\end{corollary}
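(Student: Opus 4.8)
The plan is to derive Corollary~\ref{cor:approx} directly from Theorem~\ref{them:approx}, since Algorithm~\ref{algo:forward_backward} merely returns whichever of the two greedy outputs $S_F, S_B$ attains the smaller objective value. The first point I would record is that both $S_F$ and $S_B$ are feasible for \ref{aed}: each has cardinality exactly $k$ (forward greedy adds $k$ elements, backward greedy removes $n-k$ of the $n$ starting elements), and because of the regularizer $\lambda \bm I_d$ with $\lambda>0$, the matrix $\sum_{i\in S}\bm a_i\bm a_i^{\top} + \lambda\bm I_d$ is positive definite for \emph{every} $S$, so $f(S_F)$ and $f(S_B)$ are both well-defined even when the selected matrix is rank-deficient (as happens when the selection size falls below $d$). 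Hence the output of Algorithm~\ref{algo:forward_backward} has objective value $\min\{f(S_F), f(S_B)\}$.

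The approximation guarantees then follow by a case split on $k$. For $k\in[d-1]$, I would bound $\min\{f(S_F), f(S_B)\} \le f(S_F) \le \frac{d-1}{d-k}\, z_k$, where the last inequality is exactly the forward-greedy ratio from Theorem~\ref{them:approx}; the backward output $S_B$ is not needed for this bound, but keeping it in the minimum can only decrease the objective. Symmetrically, for $k\in[d,n]$ I would bound $\min\{f(S_F), f(S_B)\} \le f(S_B) \le \frac{n-d+1}{k-d+1}\, z_k$ using the backward-greedy ratio from Theorem~\ref{them:approx}. This yields the two claimed ratios.

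For the running time, I would simply add the per-algorithm complexities from Theorem~\ref{them:approx}: forward greedy costs $\O(knd^2)$ and backward greedy costs $\O((n-k)nd^2)$, so executing both costs $\O\big(knd^2 + (n-k)nd^2\big) = \O(nd^2(k + n - k)) = \O(n^2d^2)$, the key being that the two terms telescope to $nd^2\cdot n$. The only remaining work is the final comparison, which evaluates $f(S_F)$ and $f(S_B)$ (each a trace of the inverse of a $d\times d$ matrix, costing $\O(d^3)$); since $d\le n$, this is dominated by $\O(n^2d^2)$, giving the stated overall complexity.

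I do not anticipate any genuine obstacle: the whole argument combines Theorem~\ref{them:approx} with the elementary fact that the minimum of two feasible objective values is at most either of them. The only mildly nontrivial step is the telescoping of the two time complexities to $\O(n^2d^2)$, together with the observation that evaluating $f$ on the two candidate sets does not dominate the cost — both of which are routine.
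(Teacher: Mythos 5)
Your proposal is correct and coincides with the paper's (implicit) derivation: Corollary~\ref{cor:approx} is stated without a separate proof precisely because it follows from Theorem~\ref{them:approx} by taking the better of the two feasible greedy outputs and summing the two running times, which is exactly what you do. The only minor remarks are that the final evaluation of $f$ is essentially free since both algorithms already maintain $\bm \Lambda$ as the relevant inverse, and that the displayed bound for $S_B$ in Theorem~\ref{them:approx} contains a typo ($n-d-1$ should read $n-d+1$), which your statement of the ratio correctly avoids.
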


\section{Numerical Results}\label{sec:num}
This section evaluates the numerical performance of our formulations and algorithms developed for solving \ref{aed}.
The computational results not only demonstrate the efficiency of our approaches but also provide empirical support for our theoretical findings. 
All the experiments enforce a 1-hour timeout, and are conducted in Python 3.6 with calls to Gurobi 9.5.2 on a PC with 10-core CPU, 16-core GPU, and 16GB of memory. We begin by introducing the datasets, both synthetic and real, used in our computational experiments.

\textbf{Synthetic data.} Given $(n,d)$, we randomly generate a data matrix $\bm A\in\Re^{d\times n}$ by a normal distribution $\mathcal{N}(0,1)$, and let $\bm a_i$ be the $i$-th column of $\bm A$ for each $i\in [n]$.

\textbf{UCI data \citep{blake1998uci}.} We further conduct experiments on five UCI datasets-  \textit{autos},  \textit{breastcancer}, \textit{sml}, \textit{gas}, and \textit{song}, with dimensions $(n,d)$ of $(159, 24)$, $(194, 33)$, $(200, 22)$, $(500, 128)$, and $(1000, 90)$, respectively.  We begin by removing all-zero columns from the data matrix. For datasets with a huge number of observations, we randomly sample a manageable subset of $n$ data points to ensure computational feasibility when testing exact methods. Finally, we apply min-max normalization to each feature (i.e., each row) of the resulting matrix $\bm A\in \Re^{d\times n}$, so that it falls within the range $[0,1]$.

\textbf{Movie rating data \citep{Dooms13crowdrec}.} The movie rating dataset is used to demonstrate an application of \ref{aed} to the new user cold-start problem in recommendation systems. The  dataset consists of ratings provided by 71,707 users for a total of 38,018 movies.  Ratings are shifted from the range [0, 10] to [1, 11] by adding one to each rating to avoid zero vectors.

\subsection{Evaluations of Convex Relaxations}\label{subsec:rel}
This subsection evaluates the three convex relaxations: \ref{aed-R1}, \ref{amesp-R}, and \ref{aed-R2}, by comparing their  optimality gaps and computation times with varying-scale instances. We use the Frank-Wolfe algorithm to solve them.  The results are summarized in \Cref{fig_synthetic_rel,fig_real_rel}. For each convex relaxation, we compute  {``Gap (\%)"} by $100\times(\text{upper bound}-\text{relaxation bound})/\text{upper bound}$, where the upper bound of \ref{aed} is obtained from \Cref{algo:forward_backward}.
Our results verify the effectiveness of \ref{aed-R2} and its dominance over \ref{aed-R1} and \ref{amesp-R}. 

We first test three synthetic datasets with $n=100$ and different values of $d,\lambda$, where for each dataset, we consider $k\in\{5, 10, \cdots, 95\}$. As \Cref{fig_synthetic_rel} below shows, \ref{aed-R2} delivers the tightest lower bound. 
It is worth noting that the gap curves for \ref{aed-R2} in \Cref{fig:100_20_rel_val,fig:100_80_rel_val,fig:100_20_10_rel_val} closely match the theoretical gaps shown in \Cref{fig_theorygap}, where the maximum occurs  near  $k=d$.
Besides, we would like to comment on the gaps of \ref{aed-R1} and \ref{amesp-R} in two ranges of $k$ based on \Cref{fig:100_20_rel_val,fig:100_80_rel_val,fig:100_20_10_rel_val}. We see that
for  $k< d$, \ref{amesp-R}  outperforms \ref{aed-R1}. 
When $k\ge d+1$, \ref{aed-R1} and \ref{aed-R2} are identical, and they dominate \ref{amesp-R}, as demonstrated in \Cref{them:compare}. 
In fact,  \ref{amesp-R} produces extremely large gaps for $k\in [d+1,n]$, often exceeding $100\%$,  which indicates that it returns a negative lower bound. This phenomenon aligns with our result in Part (ii) of \Cref{cor:amesp_gap}.
As seen from the differences between \Cref{fig:100_20_rel_val,fig:100_80_rel_val},  increasing $d$ tends to improve  \ref{amesp-R} and \ref{aed-R2} but worsen \ref{aed-R1}, especially for small-$k$ instances. 
Finally, we find that \ref{amesp-R} requires the largest computation time since it builds on the $n\times n$ matrix space.

In \Cref{fig_real_rel}, we test the relaxations of \ref{aed} on three larger-sized real datasets ($n=159, 200, 1000$) with $\lambda=1$. The values of 
$k$ are tested in increments of 5 for $n=159, 200$, and  larger increments of 50 for $n=1000$.
As before, \ref{aed-R2} wins on all test instances.
We observe in \Cref{fig:159_real_rel_val,fig:200_real_rel_val,fig:1000} that  as $k\to d$ from below, \ref{aed-R2} strictly outperforms \ref{aed-R1} and \ref{amesp-R} even if we combine them and take the minimum. 
It is still obvious that the gap pf \ref{aed-R2} tends to be larger  when $k$ approaches $d$ from either side. Interestingly, as we observe in \Cref{fig:159_real_rel_val,fig:200_real_rel_val},  \ref{aed-R1} and \ref{amesp-R} are no longer directly comparable for values of $k<d$. For the largest test case in \Cref{fig:1000_real_rel_time}, where $n=1000$, \ref{amesp-R} hits the time limit, highlighting its scalability limitations.
\begin{figure}[ht]
	\centering
\subfigure[Gap for $(n,d,\lambda)=(100,20, 1)$]{
\includegraphics[width=0.3\textwidth]{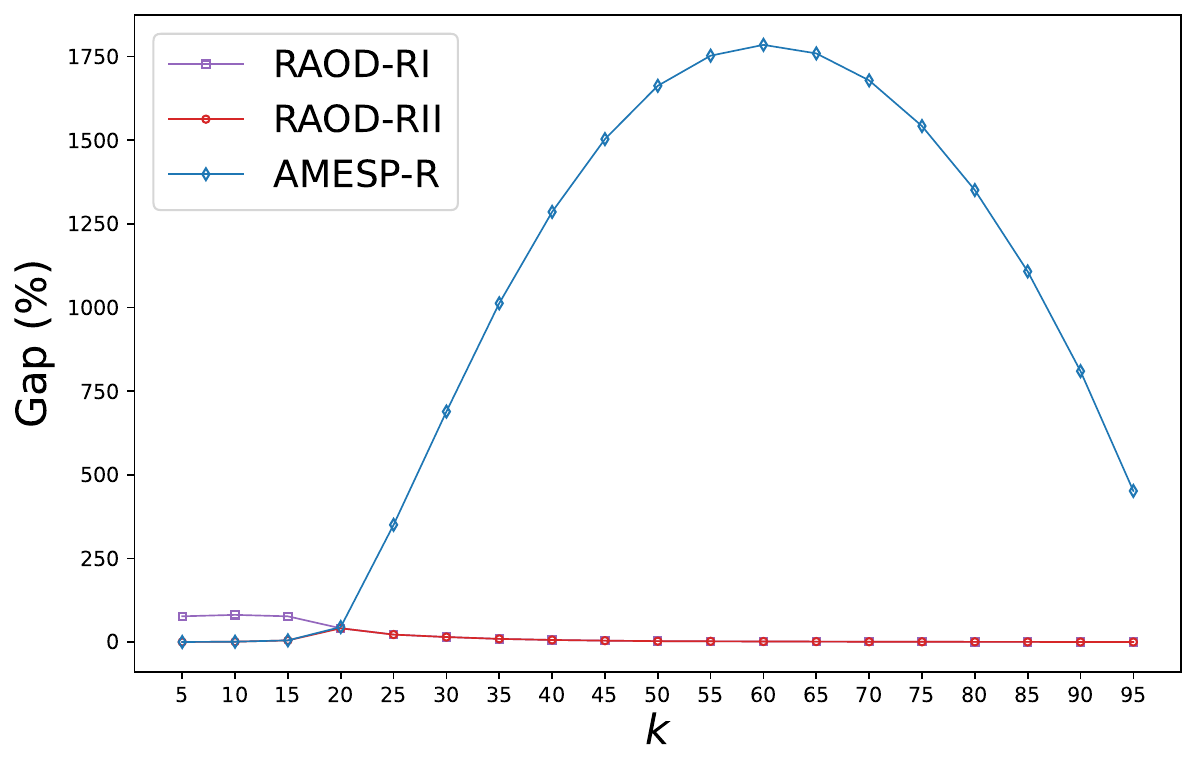}
\label{fig:100_20_rel_val}
	}
\subfigure[Gap for $(n,d,\lambda)=(100,80, 1)$]{
		\centering
\includegraphics[width=0.3\textwidth]{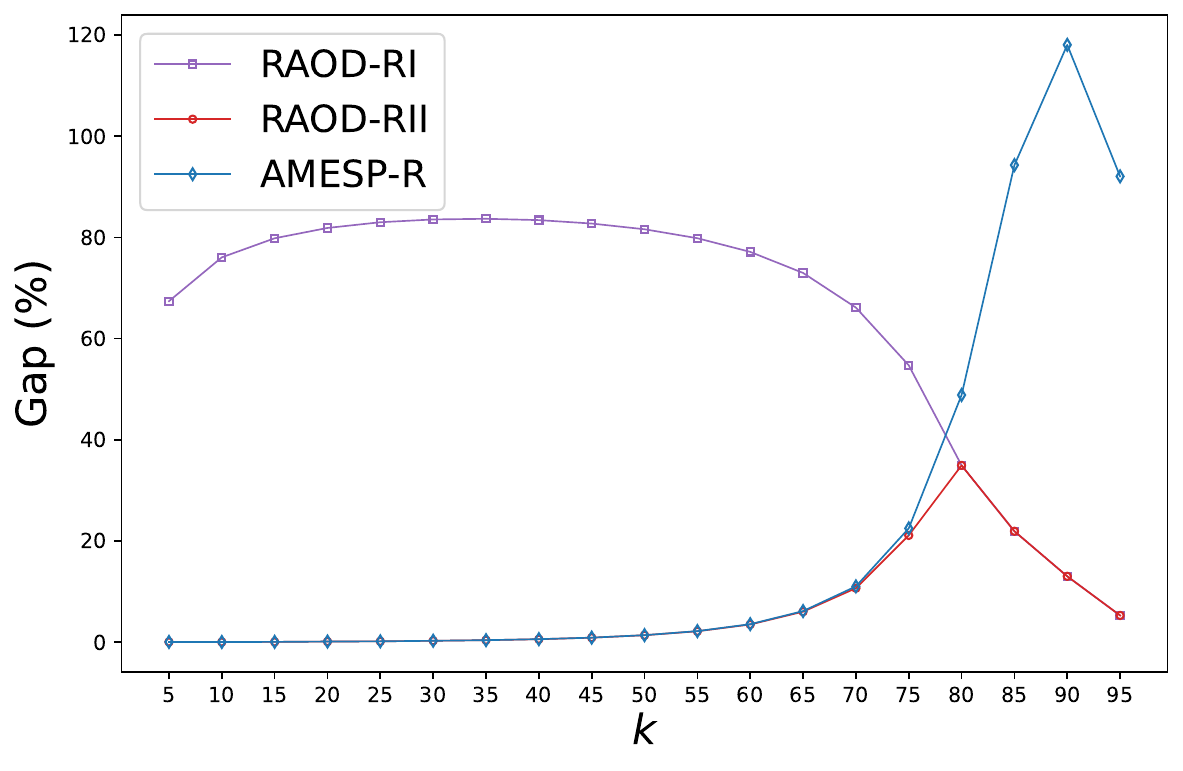}
\label{fig:100_80_rel_val}
	}
 	\subfigure[Gap for $(n,d,\lambda)=(100,20, 10)$]{
		\centering
\includegraphics[width=0.3\textwidth]{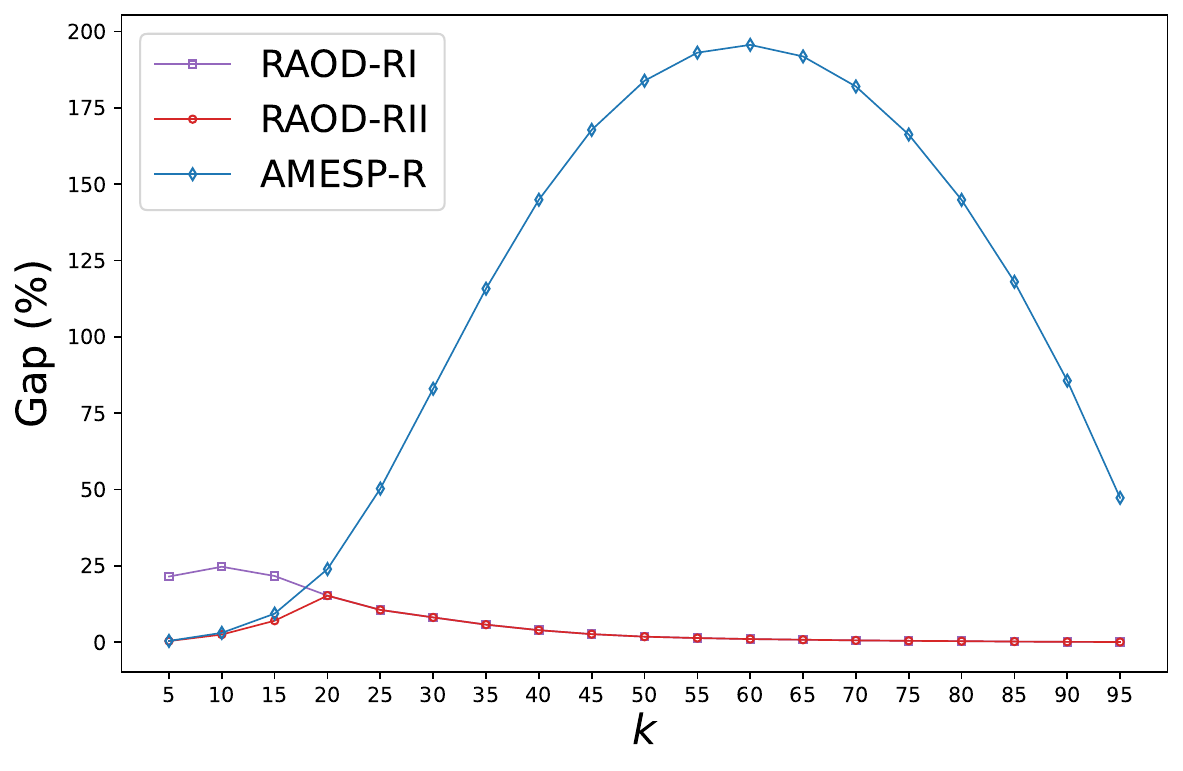}
\label{fig:100_20_10_rel_val}
	}
    \\
    \subfigure[Time for $(n,d,\lambda)=(100,20, 1)$]{
\includegraphics[width=0.3\textwidth]{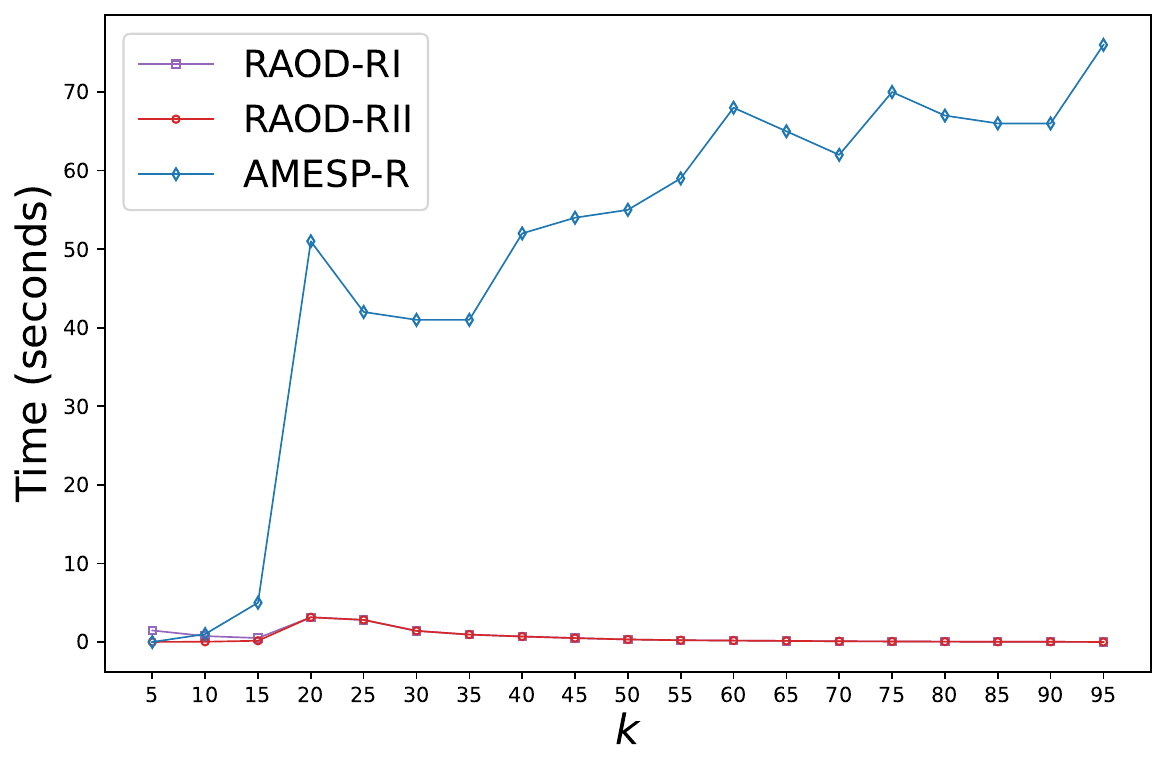}
\label{fig:100_20_rel_time}
	}
	\subfigure[Time for $(n,d,\lambda)=(100,80, 1)$]{
		\centering
\includegraphics[width=0.3\textwidth]{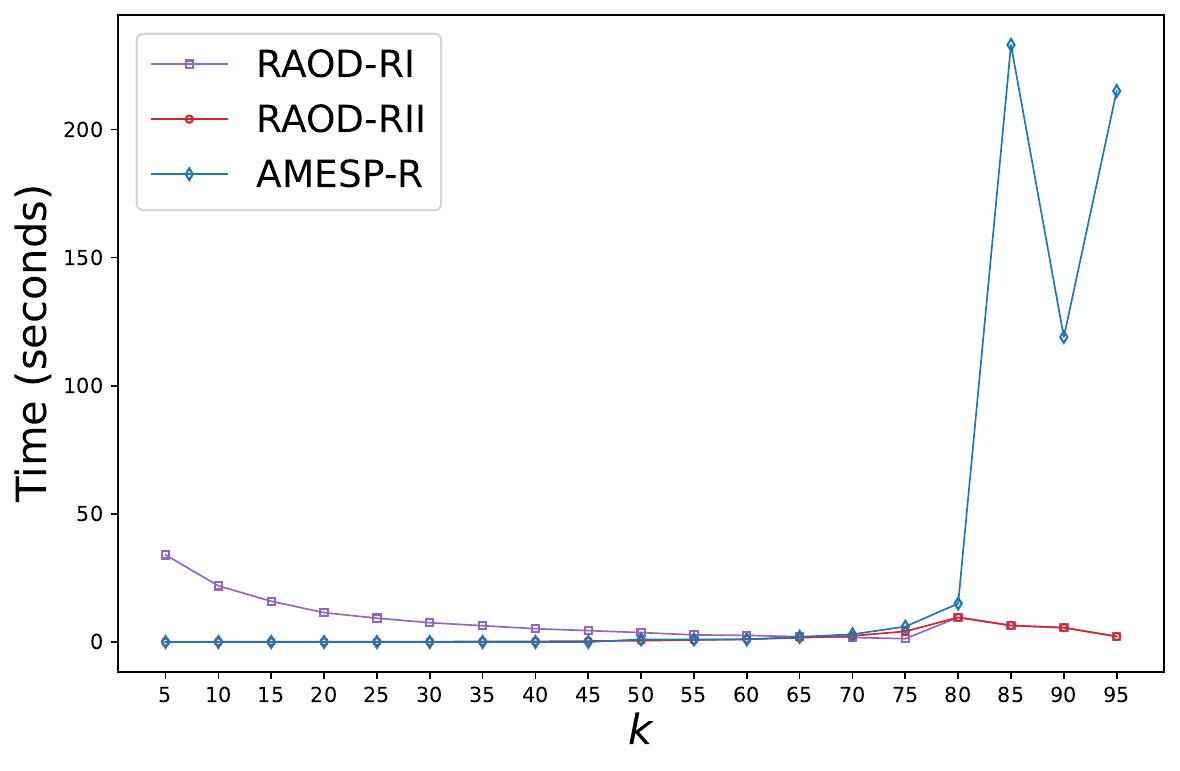}
\label{fig:100_80_rel_time}
	}
 	\subfigure[Time for $(n,d,\lambda)=(100,20, 10)$]{
		\centering
\includegraphics[width=0.3\textwidth]{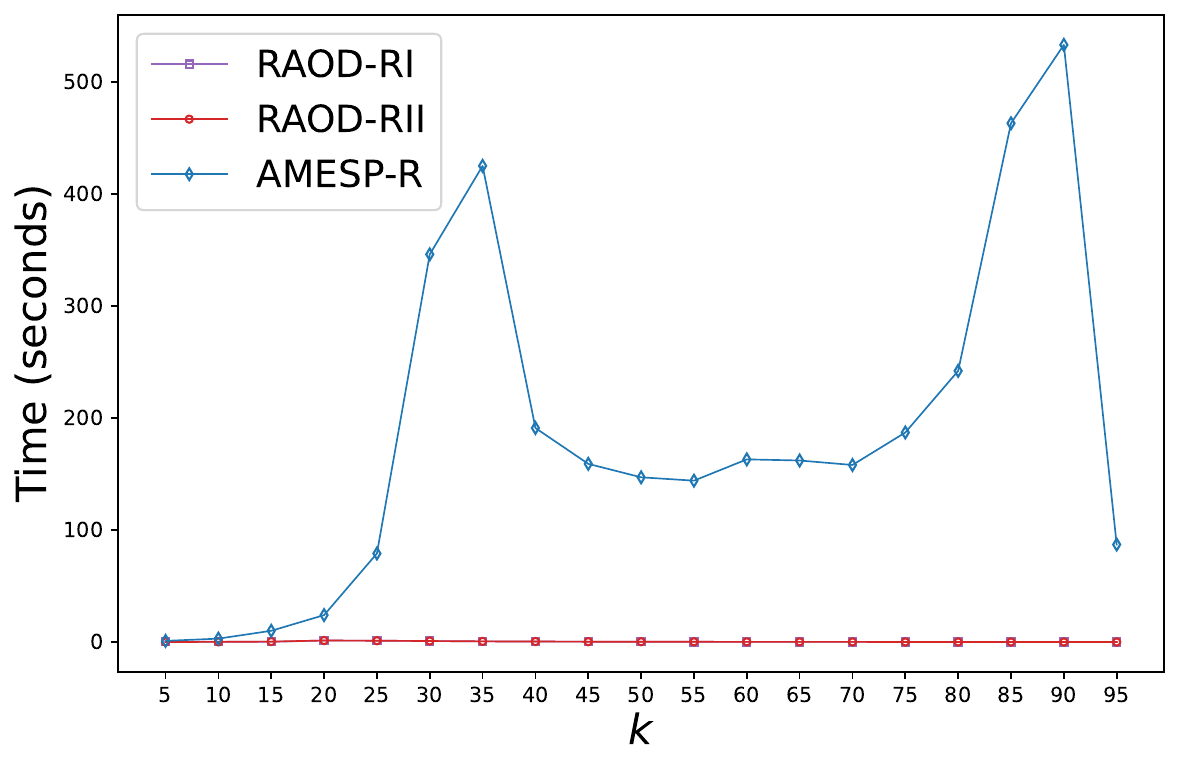}
\label{fig:100_20_10_rel_time}
	}
	\caption{Convex relaxations of \ref{aed}:  Gaps  and computation times on synthetic data  }\label{fig_synthetic_rel}
\end{figure}

\begin{figure}[ht]
	\centering
\subfigure[Gap for $(n,d,\lambda)=(159,24,1)$]{
\includegraphics[width=0.3\textwidth]{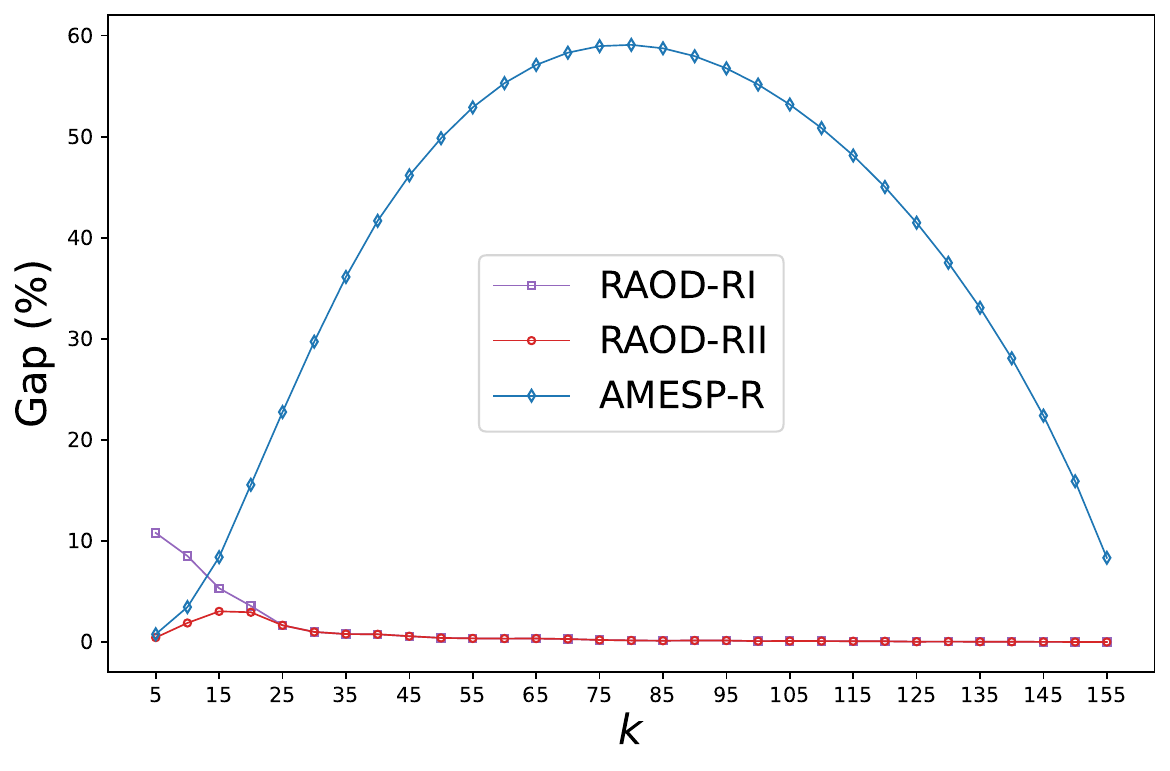}
\label{fig:159_real_rel_val}
	}
	\subfigure[Gap for $(n,d,\lambda)=(200,22,1)$]{
		\centering
\includegraphics[width=0.3\textwidth]{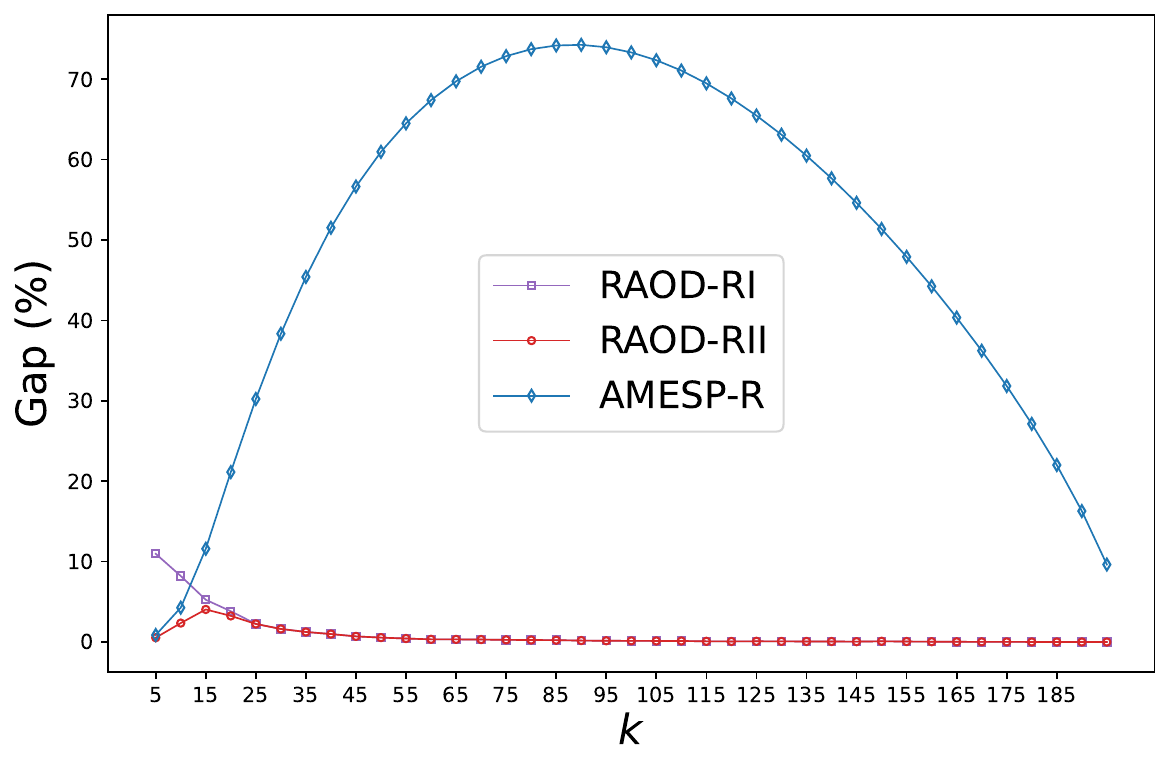}
\label{fig:200_real_rel_val}
	}
 	\subfigure[Gap for $(n,d,\lambda)=(1000,90,1)$]{
    \label{fig:1000}
		\centering
\includegraphics[width=0.3\textwidth]{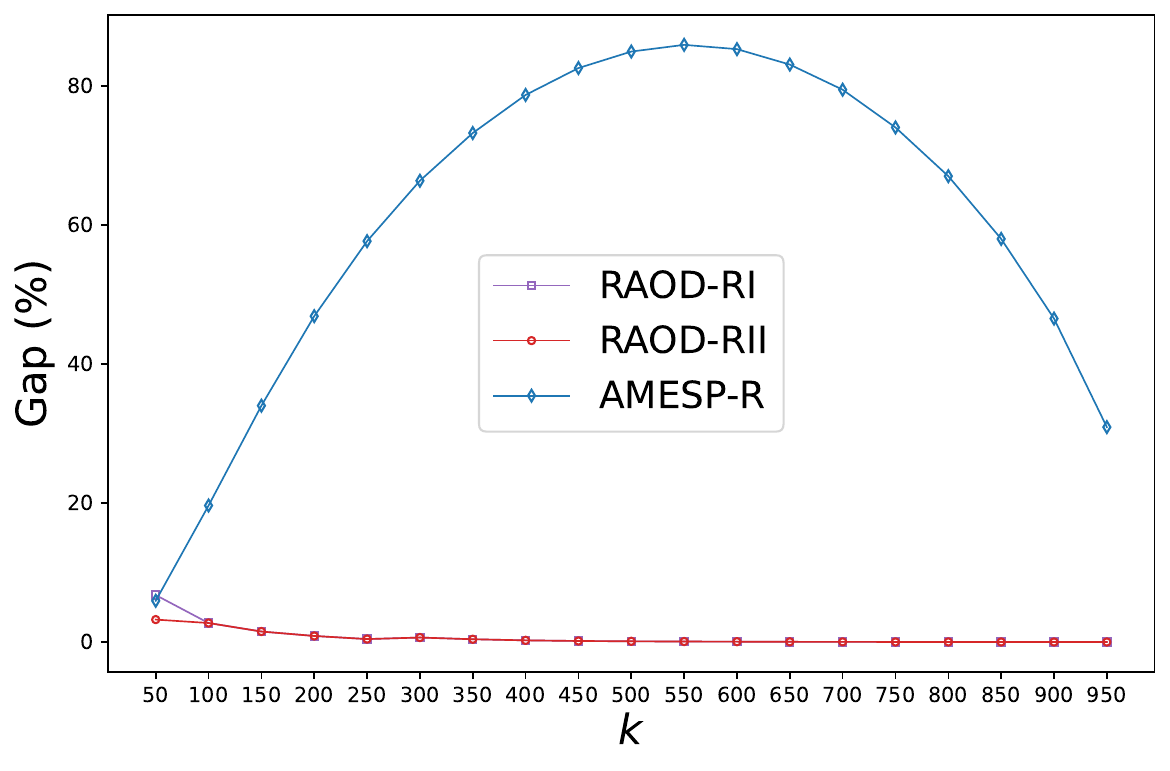}
	}
    \\
    \subfigure[Time for $(n,d,\lambda)=(159,24,1)$]{
\includegraphics[width=0.3\textwidth]{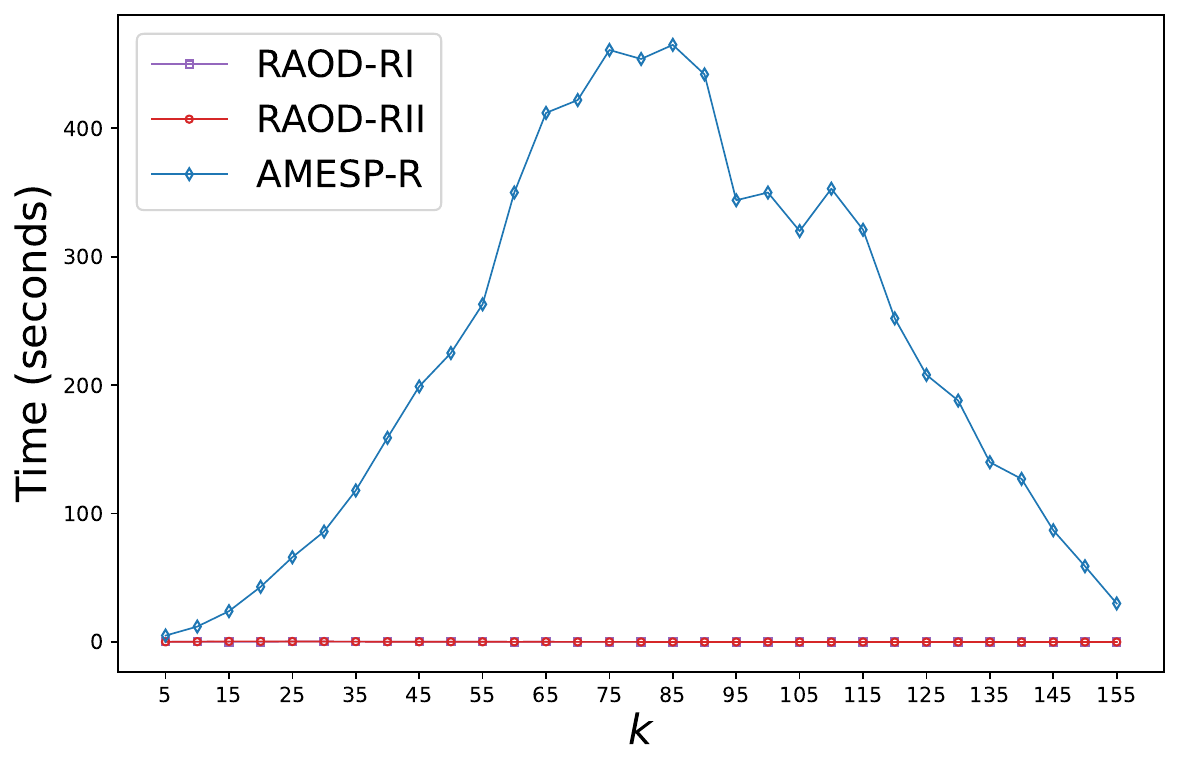}
	}
	\subfigure[Time for $(n,d,\lambda)=(200,22,1)$]{
		\centering
\includegraphics[width=0.3\textwidth]{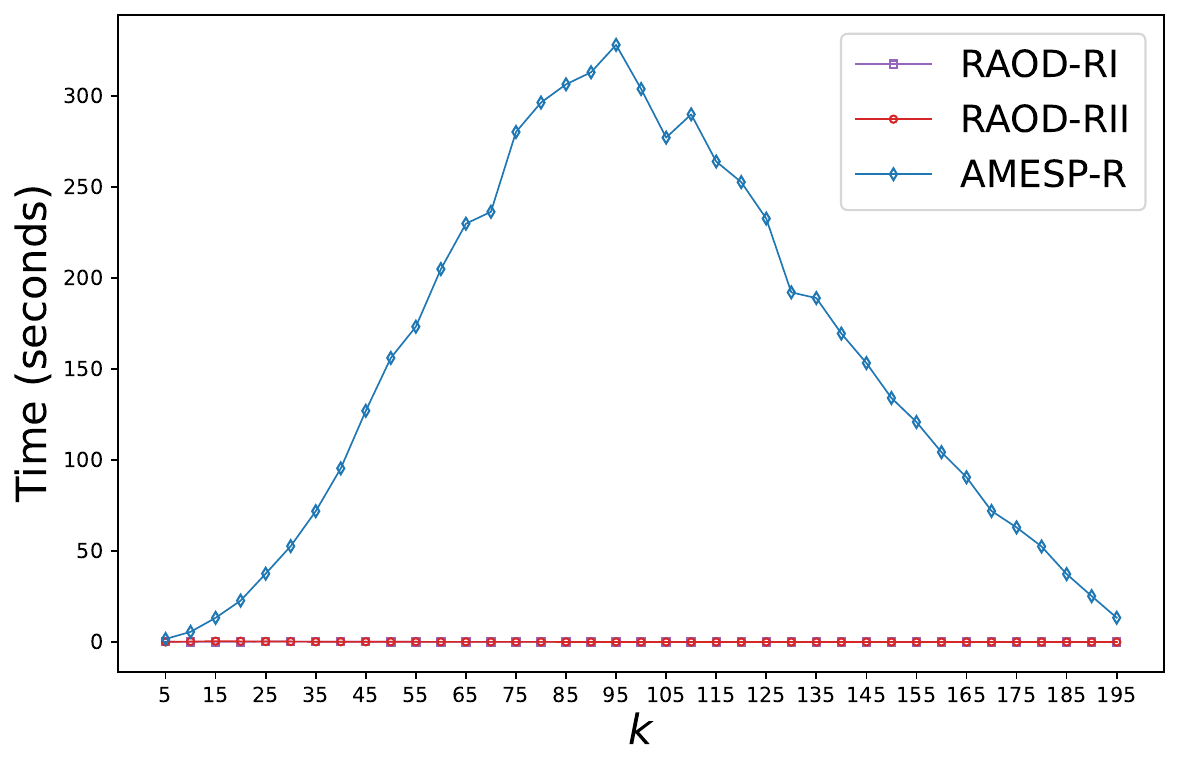}
	}
 	\subfigure[Time for $(n,d,\lambda)=(1000,90,1)$]{
		\centering
\includegraphics[width=0.3\textwidth]{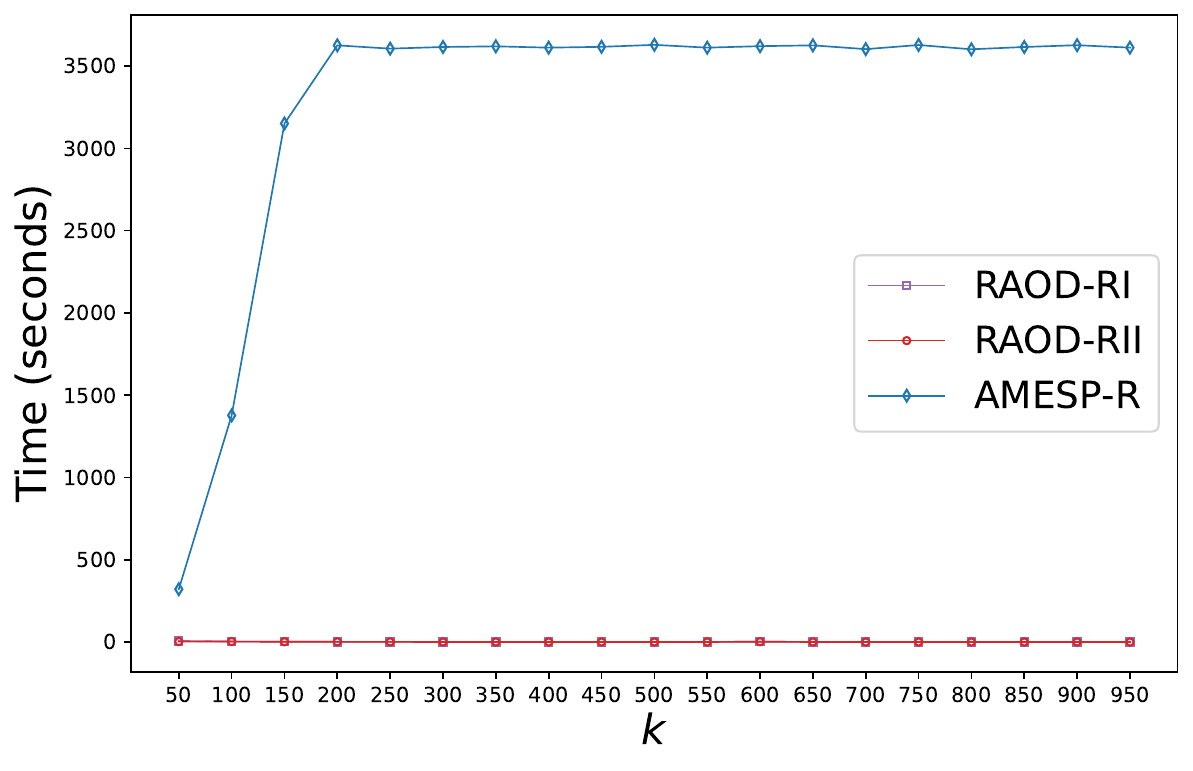}
\label{fig:1000_real_rel_time}
	}
	\caption{Convex relaxations of \ref{aed}:  Gaps  and computation times on real data}\label{fig_real_rel}
\end{figure}

\subsection{Evaluations of Approximation Algorithms}
In this subsection, we evaluate the performance of several approximation algorithms for solving \ref{aed} on the same data instances. Specifically, we compare our \Cref{algo:forward,algo:backward} with  the following methods: ``LocalSearch," referring to the local search algorithm proposed by \cite{li2024best} for \ref{amesp}; ``RegVol," the regularized volume sampling algorithm introduced by \cite{derezinski2017subsampling}; and its more efficient implementation ``FastRegVol." Local search starts from a random solution, and sampling algorithms are run 20 times per instance, with the best result reported.
Notably, we omit comparisons with the randomized sampling algorithms from \cite{li2024best, tantipongpipat2020lambda}, as they were computationally intensive and failed to finish within one hour on these instances.

The approximation gaps and computation times for the evaluated algorithms are summarized in \Cref{fig_synthetic_approx,fig_real_approx}. We track the best lower bound $\nu_k^{\text{N}}$ found from the
previous subsection, and  compute  {``Gap (\%)"} by $100\times(\text{upper bound}-\nu_k^{\text{N}})/\text{upper bound}$, where the upper bound is the objective value returned by each approximation method. We observe that combinatorial algorithms outperform the two sampling algorithms, with significant improvements  when $k>d$ and $d$ is relatively small compared to $n$. The comparison of \Cref{fig:20time,fig:80time} shows that the runtime of FastRegVol increases more rapidly as $d$ increases, compared to the other algorithms.   Across all algorithms, the largest approximation gaps tend to occur near $k=d$. This may result from using a lower bound $\nu_k^{\text{N}}$ rather than the true optimal value in computing the gap, as \ref{aed-R2} deviates most from the optimum around $k=d$ (see \Cref{fig_synthetic_rel,fig_real_rel}). In general, the three combinatorial algorithms are not directly comparable, but the local search algorithm outperforms our greedy \Cref{algo:forward,algo:backward} in more cases. However, for the largest instance with $n=1000$ in \Cref{fig:1000_approx_gap,fig:1000_approx_time}, it takes considerably longer but achieves nearly identical gaps to our algorithms. 
In summary, our \Cref{algo:forward,algo:backward} strike the best balance between scalability and solution quality among all the algorithms. As seen in \Cref{fig:200_approx}, \Cref{algo:forward} yields a smaller gap  when $k$ is small, while \Cref{algo:backward} becomes more effective when $k\ge d$. This trend aligns with their theoretical guarantees discussed following  \Cref{them:approx}.

\begin{figure}[ht]
	\centering
\subfigure[Gap for $(n,d,\lambda)=(100,20, 1)$]{
\includegraphics[width=0.3\textwidth]{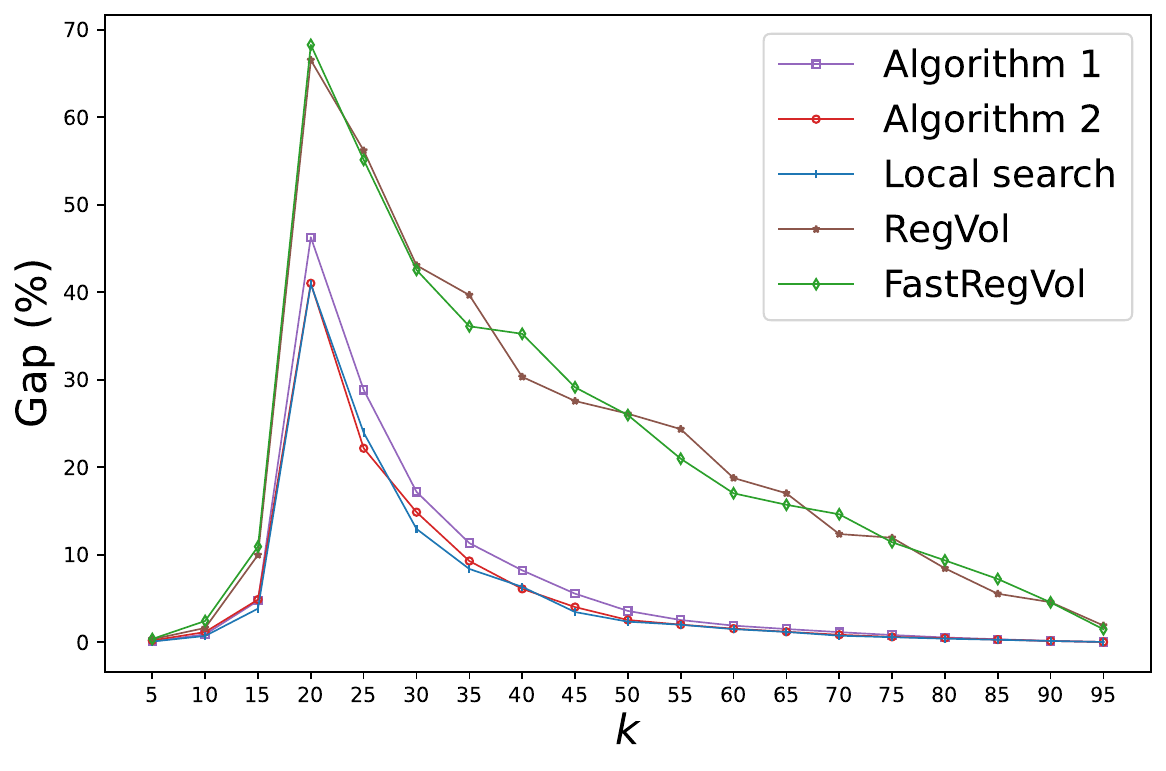}
	}
	\subfigure[Gap  for $(n,d,\lambda)=(100,80, 1)$]{
		\centering
\includegraphics[width=0.3\textwidth]{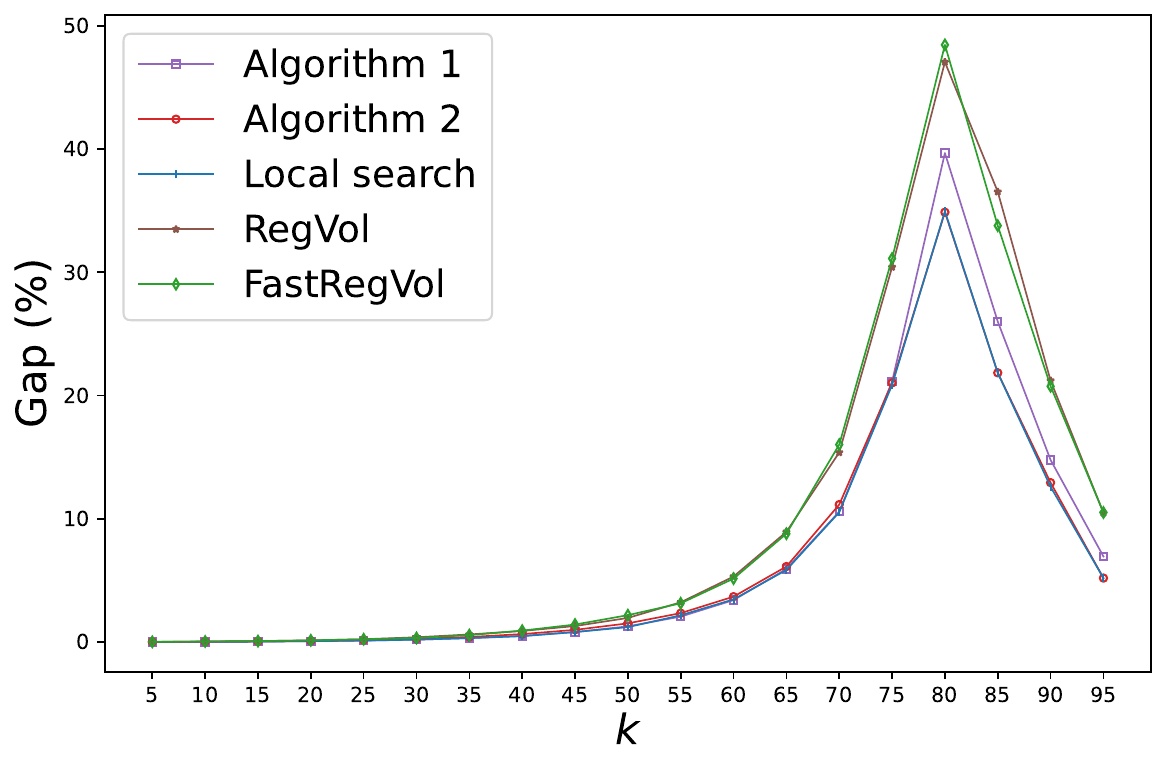}
	}
 	\subfigure[Gap  for $(n,d,\lambda)=(100,20, 10)$]{
		\centering
\includegraphics[width=0.3\textwidth]{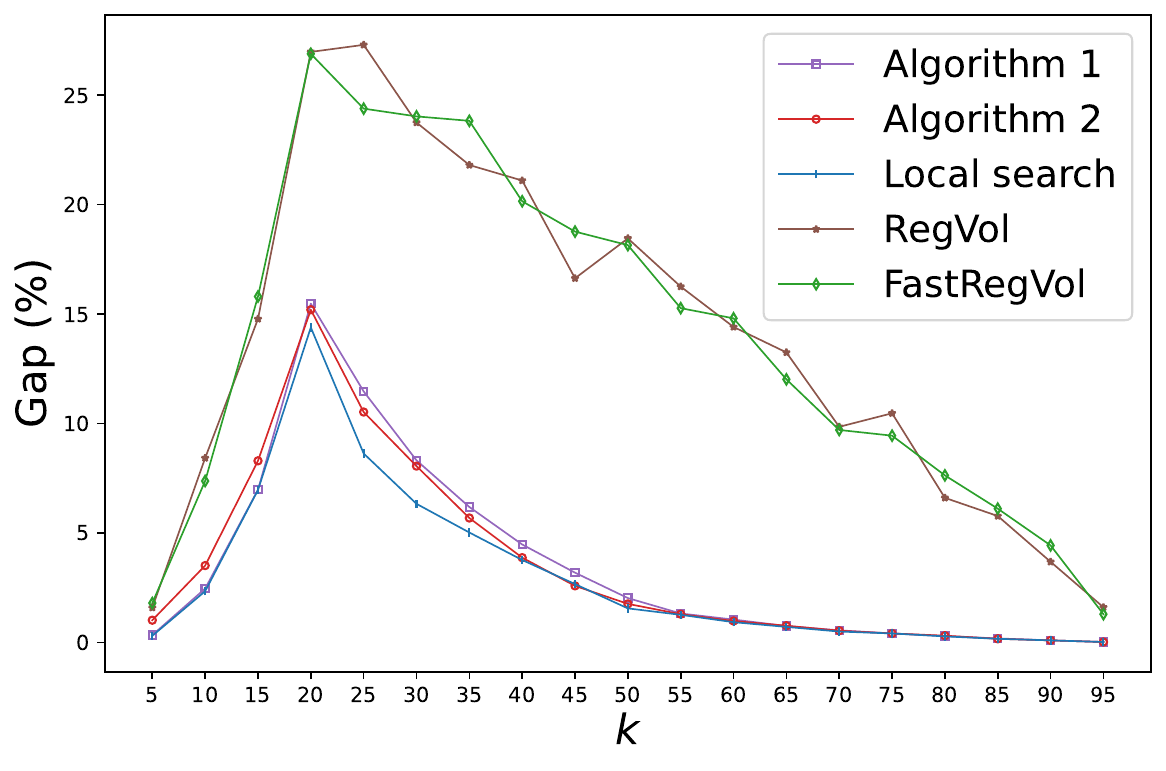}
	}
    \\
    \subfigure[Time for $(n,d,\lambda)=(100,20, 1)$]{\label{fig:20time}
\includegraphics[width=0.3\textwidth]{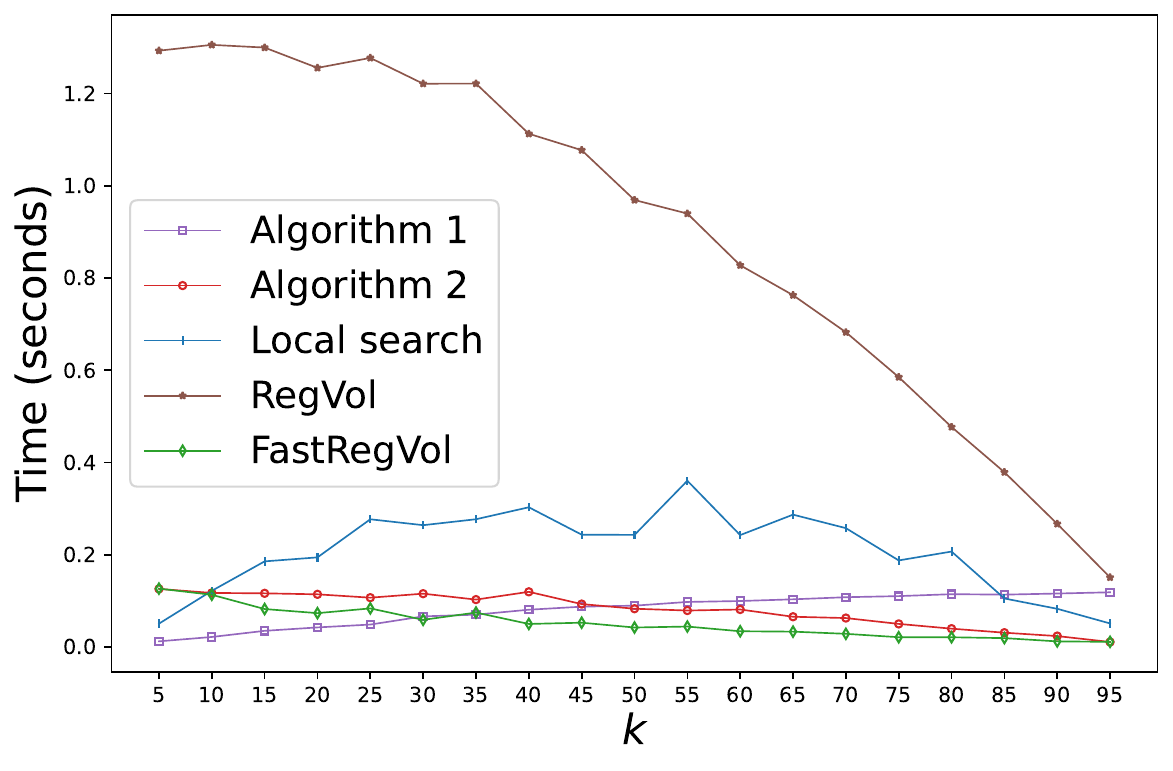}
	}
	\subfigure[Time for $(n,d,\lambda)=(100,80, 1)$]{\label{fig:80time}
		\centering
\includegraphics[width=0.3\textwidth]{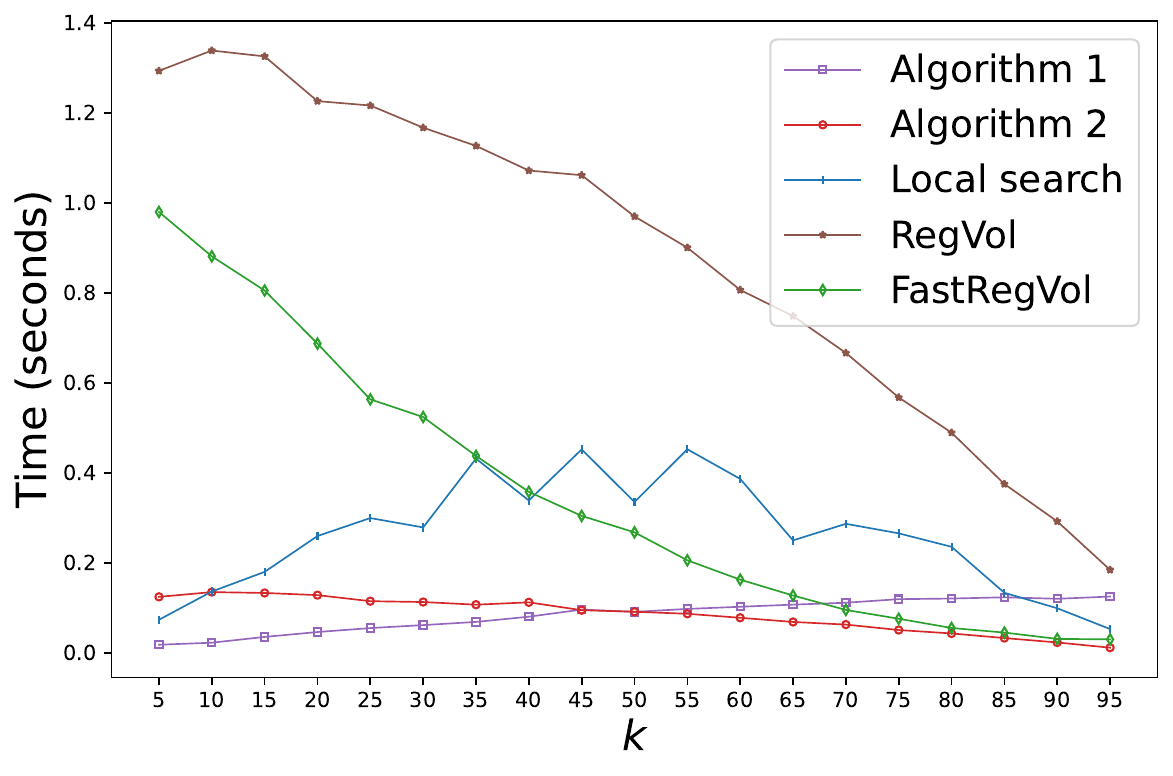}
	}
 	\subfigure[Time for $(n,d,\lambda)=(100,20, 10)$]{
		\centering
\includegraphics[width=0.3\textwidth]{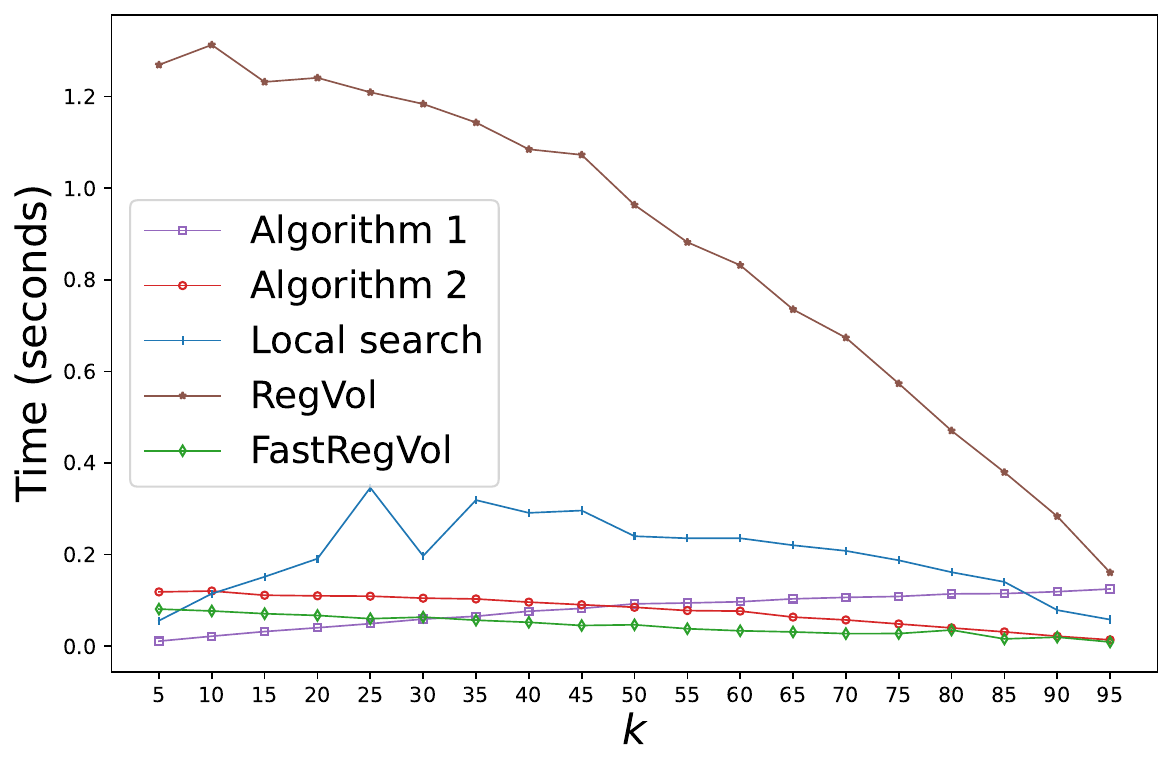}
	}
\caption{Approximation algorithms of \ref{aed}: Gaps  and computation times on synthetic data }\label{fig_synthetic_approx}
\end{figure}

\begin{figure}[ht]
	\centering
\subfigure[Gap for $(n,d,\lambda)=(159,24,1)$]{
\includegraphics[width=0.3\textwidth]{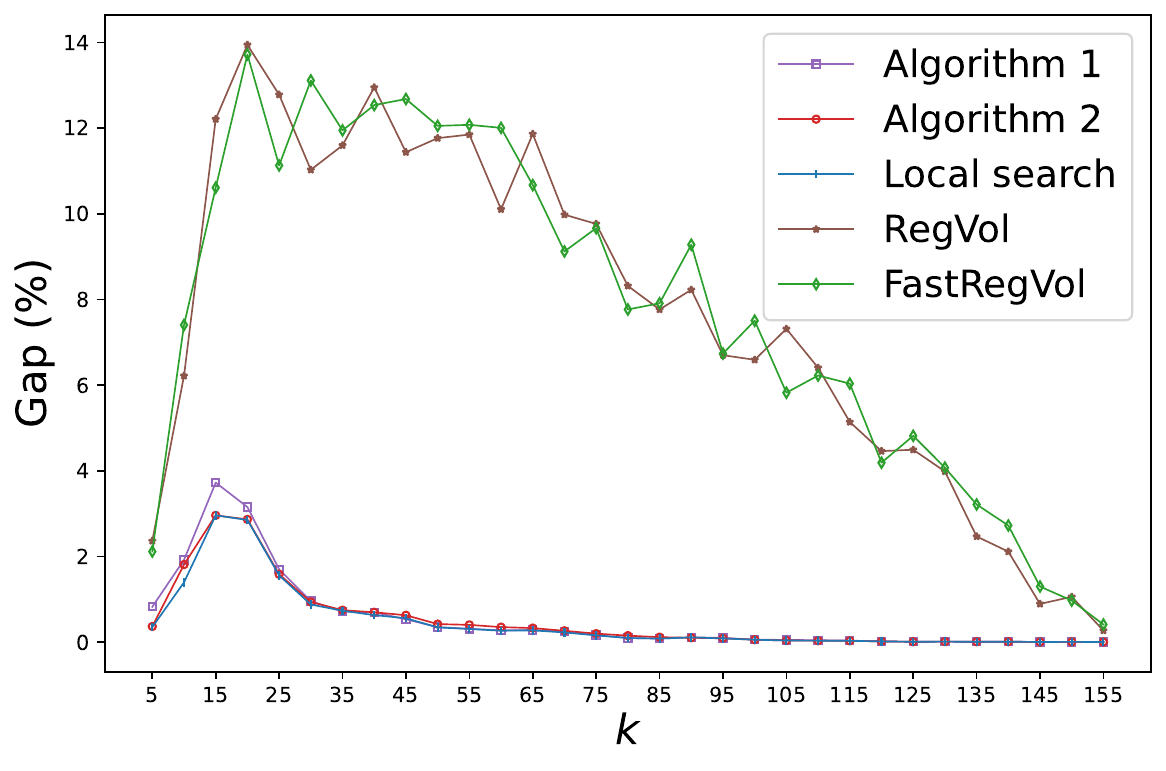}
	}
	\subfigure[Gap  for $(n,d,\lambda)=(200,22,1)$]{
    \label{fig:200_approx}
		\centering
\includegraphics[width=0.3\textwidth]{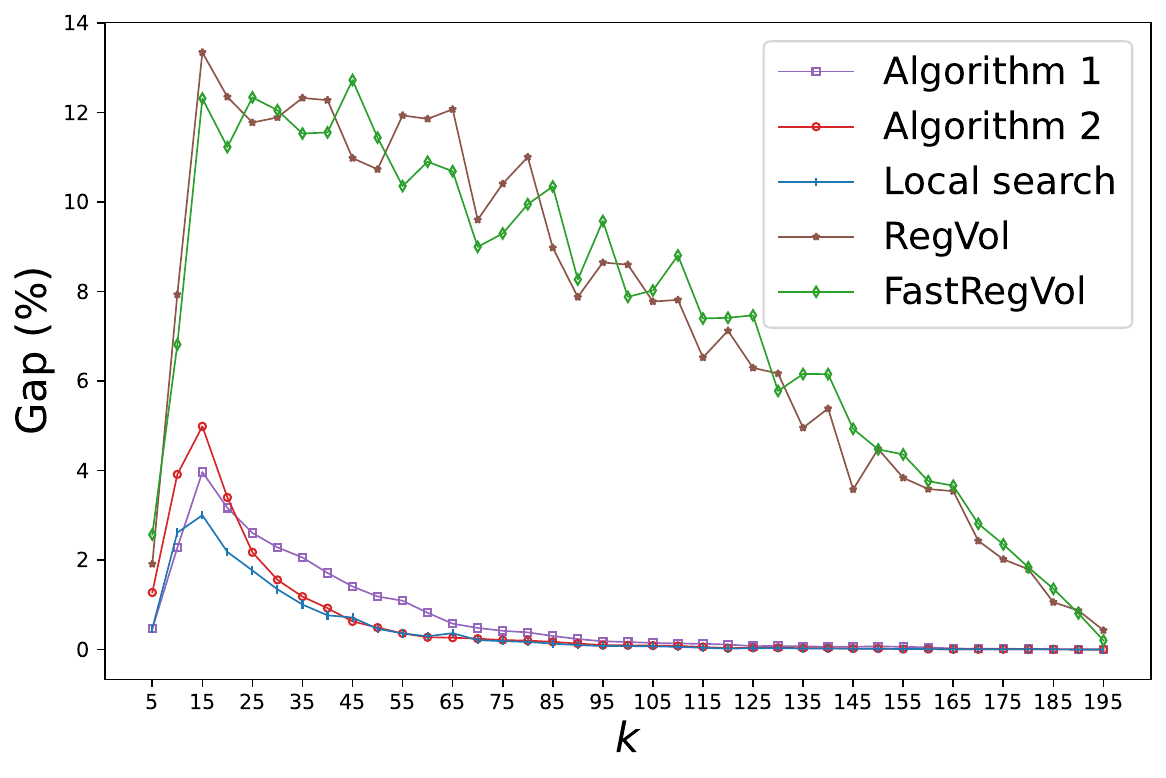}
	}
 	\subfigure[Gap  for $(n,d,\lambda)=(1000,90,1)$]{
    \label{fig:1000_approx_gap}
		\centering
\includegraphics[width=0.3\textwidth]{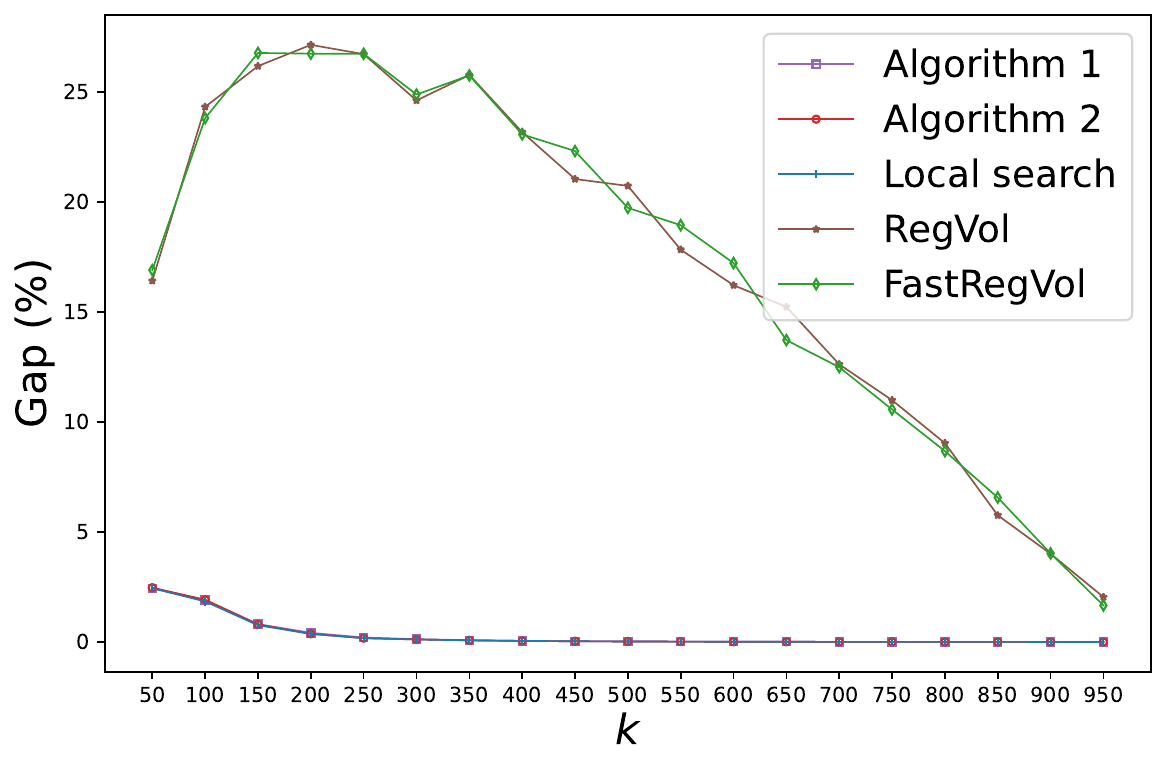}
	}
    \\
    \subfigure[Time for $(n,d,\lambda)=(159,24,1)$]{
\includegraphics[width=0.3\textwidth]{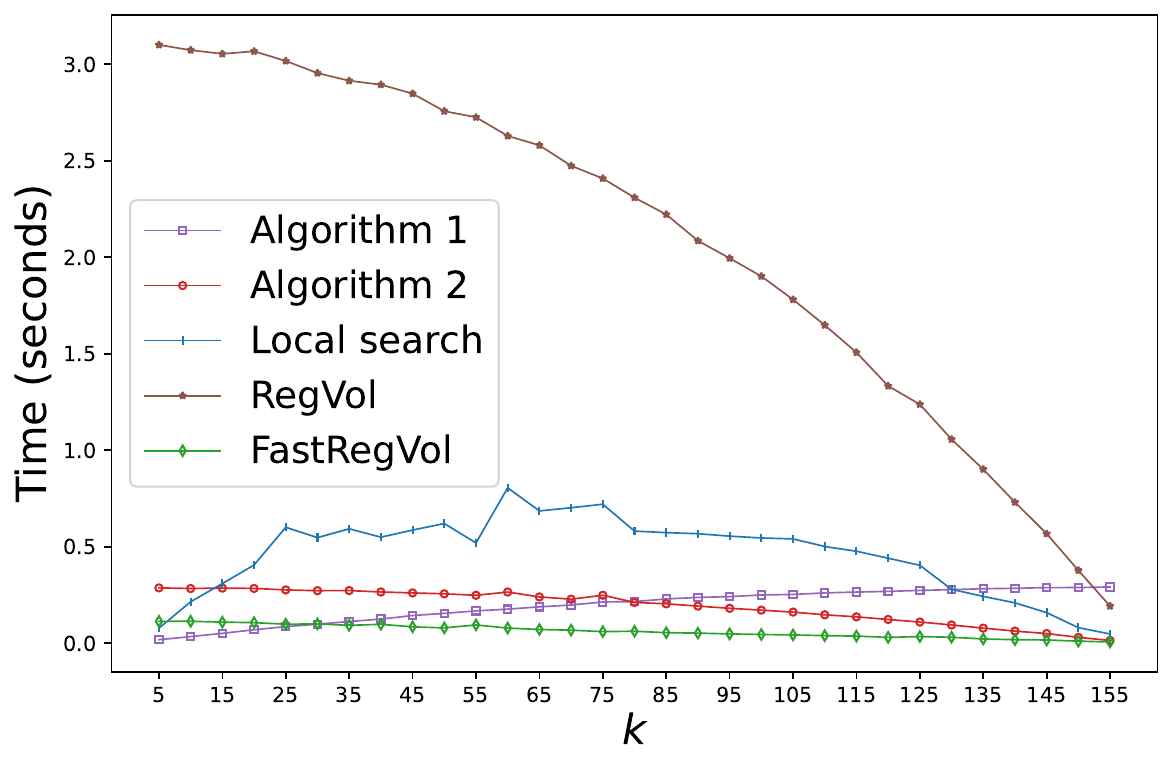}
	}
	\subfigure[Time for $(n,d,\lambda)=(200,22,1)$]{
		\centering
\includegraphics[width=0.3\textwidth]{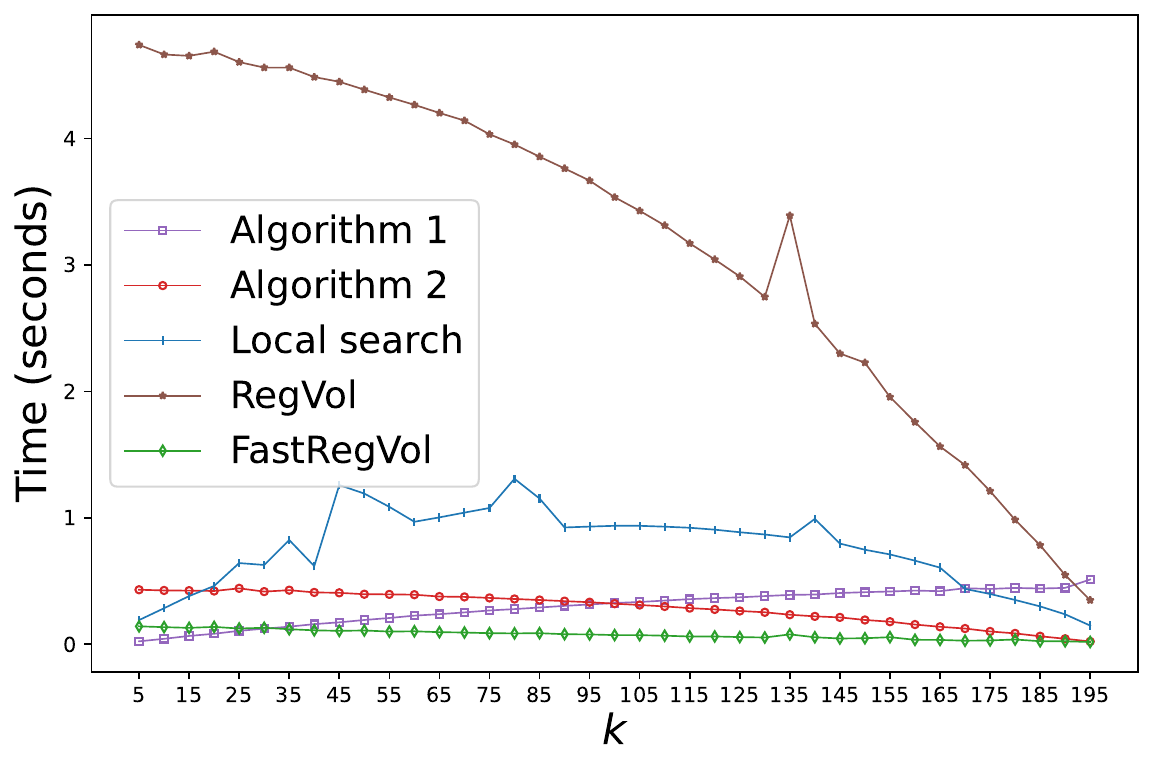}
	}
 	\subfigure[Time for $(n,d,\lambda)=(1000,90,1)$]{
    \label{fig:1000_approx_time}
		\centering
\includegraphics[width=0.3\textwidth]{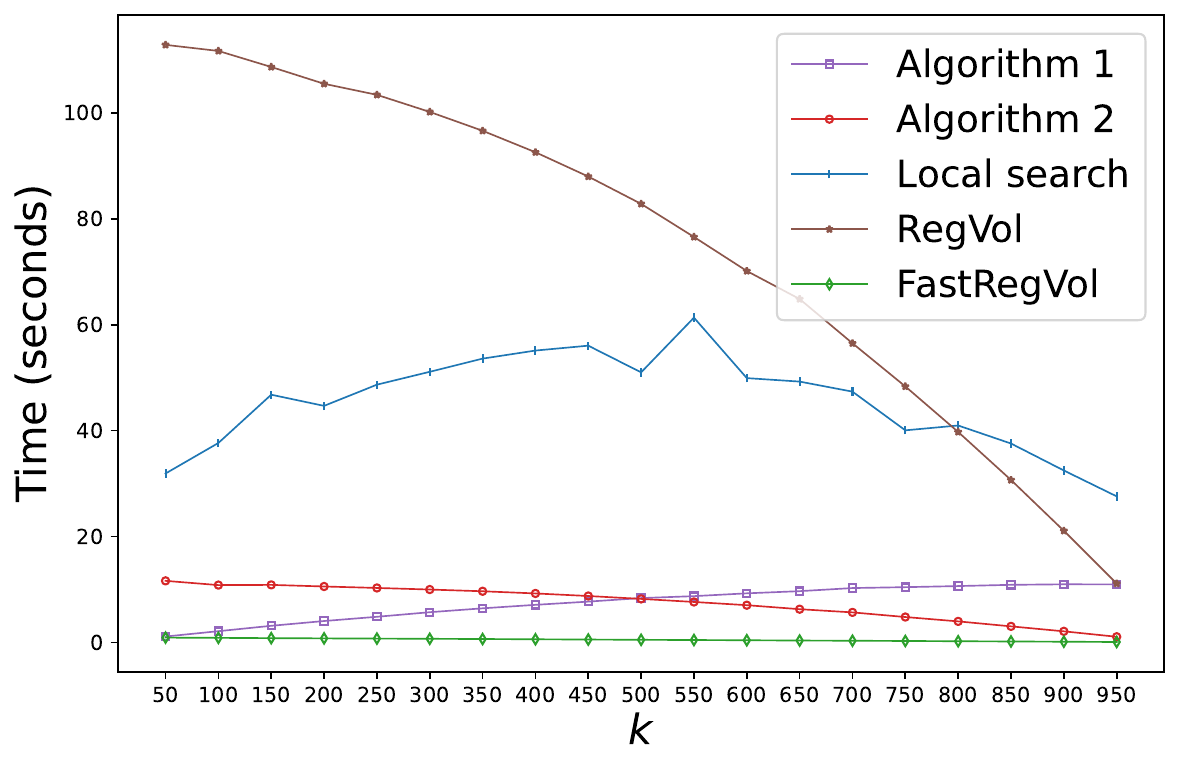}
	}
	\caption{Approximation Algorithms of \ref{aed}: Gaps  and computation times on real data }\label{fig_real_approx}
\end{figure}

\subsection{Performance of Exact Algorithms}\label{subsec:exact}
We now compare the three (mixed-)integer convex formulations introduced in \Cref{sec:Method,subsec:augmesp} using both synthetic and real datasets.
To solve our proposed formulation \eqref{eq:novel}, we develop the cutting-plane \Cref{algo:cut}, which we implement using Gurobi's \textit{lazy} constraints.  The same cutting-plane scheme  can be readily extended to \eqref{aed1}, since its objective function is also convex; we refer to this variant as ``Cutting plane via \eqref{aed1}." For fairness, both algorithms employ the same computational enhancements and termination criterion, as detailed in Subsection \ref{subsec:cut}. 
 In addition, we use Gurobi to  solve the formulation \ref{aed_miscop} directly. Although \ref{amesp-R} offers an exact formulation when $\bm x$ is binary, we do not include it in our comparison due to its high computational cost and the fact that \ref{aed-R2} theoretically dominates \ref{amesp-R} (see \Cref{them:compare}, Part (iii)).

 We restrict our experiments to the case $k < d$. From a theoretical perspective,  the objective functions of \eqref{aed1} and \eqref{eq:novel} are equivalent when $k \geq d$, as indicated by \Cref{lem:gamma},  making the comparison trivial. From a practical perspective, this choice is also well-motivated.
In high-dimensional application problems such as user cold-start recommendation systems, constraints such as limited budgets or user cognitive costs naturally enforce small values of $k$ \citep{chamon2017approximate}.

For each instance with the fixed pair $(n,d)$,  we vary the parameters $\lambda\in\{1, 10\}$ and $k\in\{5,10\}$, thus creating four testing cases.
The numerical results are presented in \Cref{table:exact_synthetic,table:exact_real}, where  ``{MIPGap(\%)}" represents  the relative optimality gap in percentage at which Gurobi terminates. A MIPGap of $0.00$ indicates a certified optimal solution.  Among the three exact algorithms, the cutting-plane algorithm based on \eqref{aed1} attains the largest MIPGaps in most instances and only occasionally reaches optimality when $\lambda=10$, as seen in \Cref{table:exact_real}. \ref{aed_miscop} also struggles with large MIPGaps and timeouts. In particular,  for instances with $n\ge 500$, \ref{aed_miscop} may even exhaust available memory, marked by ``--" in \Cref{table:exact_real}. In contrast, our proposed \Cref{algo:cut} achieves significantly smaller MIPGaps and solves most cases to optimality within 1 hour,  especially for $k=5$.
These results show the superiority of \Cref{algo:cut} in both computational efficiency and MIPGap reduction for the tested instances, which can be primarily attributed to the strength of the continuous relaxation of \eqref{eq:novel}. For small $k<d$, its relaxation \ref{aed-R2} is strictly tighter than \ref{aed-R1}, the continuous relaxation of \eqref{aed1} and \ref{aed_miscop}.

Next, we would like to comment on the effect of parameters $d$, $k$, and $\lambda$. What is interesting in \Cref{table:exact_synthetic} is that, for fixed $n, k,\lambda$, the MIPGap for \Cref{algo:cut} decreases as the experimental dimension $d$ increases. For instance,  \Cref{algo:cut} solves the case with $(n,d,k,\lambda) = (100, 80, 10, 1)$ in just $7$ seconds, whereas the lower-dimensional case $(100,20,10,1)$ takes longer than $1$ hour. A likely explanation is that, according to \Cref{them:gap} and the discussion following \Cref{cor:amesp_gap},  \ref{aed-R2} admits an optimality gap of $(d-1)/(d-k)$  when  $k\le (n+1)/2$. Since this theoretical gap is decreasing with $d$ (for fixed $k$), \ref{aed-R2} becomes tighter in higher dimensions, thus enhancing the performance of \Cref{algo:cut}. This observation highlights the advantage of our \Cref{algo:cut}  in complex, high-dimensional problems. As expected, instances with $k=10$ are harder to solve than those with $k=5$.
We also observe that ``Cutting plane via \eqref{aed1}" and \ref{aed_miscop} benefit from higher $\lambda$ but still fail to close the gap in many instances. 
 In contrast,  the effect of $\lambda$ on the performance of \Cref{algo:cut} is instance-dependent.
 \Cref{table:exact_synthetic} shows that the case with $\lambda=10$ poses larger MIPGaps for \Cref{algo:cut} than the case with $\lambda=1$, whereas in \Cref{table:exact_real}, the opposite holds.

Finally, we evaluate the approximation quality of \Cref{algo:forward_backward} by comparing its outputs with the best lower bounds returned by \Cref{algo:cut}. As shown in the ``Gap(\%)" column of \Cref{table:exact_synthetic,table:exact_real}, this gap remains within $2.45\%$ and is computed in under $11$ seconds, which demonstrates both the scalability and near-optimality of \Cref{algo:forward_backward}. Moreover, in cases where this gap is smaller than the MIPGap of \Cref{algo:cut}, it indicates that \Cref{algo:cut} failed to find a better feasible solution within the time limit than the one produced by \Cref{algo:forward_backward}.

\begin{table}[htbp]
\caption{Exact algorithms for \ref{aed} on synthetic data} 
\centering
\label{table:exact_synthetic}
\setlength{\tabcolsep}{1.5pt}\renewcommand{\arraystretch}{0.9}
\begin{tabular}{c|c|r|r|r| r| r| r|r|r|r}
\hline
\multicolumn{1}{c|}{{Dataset}}&\multicolumn{2}{c|}{{Params}}  & \multicolumn{2}{c|}{{Cutting plane via \eqref{aed1}}} & \multicolumn{2}{c|}{{\ref{aed_miscop}}} & \multicolumn{2}{c|}{\Cref{algo:cut} via \eqref{eq:novel}} & \multicolumn{2}{c}{{\Cref{algo:forward_backward}}}\\
\hline
 \multicolumn{1}{c|}{($n$, $d$)}  & \multicolumn{1}{c|}{$\lambda$}  & \multicolumn{1}{c|}{$k$} & 
\multicolumn{1}{c|}{MIPGap(\%)}& 
\multicolumn{1}{c|}{Time}&  \multicolumn{1}{c|}{MIPGap(\%)}& 
\multicolumn{1}{c|}{Time} &
\multicolumn{1}{c|}{MIPGap(\%)}& 
\multicolumn{1}{c|}{Time} & \multicolumn{1}{c|}{Gap(\%)} & \multicolumn{1}{c}{Time}\\
\hline
\multirow{4}{*}{(100, 20)} & 1 & 5 & 77.92& 3600 & 76.08&3600 & 0.00 & 589 & 0.02 &1\\
&1 & 10 & 82.54 & 3600 &80.14 &3600 & 0.82 & 3600 & 0.88 & 1\\
  & 10 & 5& 21.17 & 3600 & 18.15&3600 & 0.00 & 6 & 0.04 & 1\\
&10 & 10&  27.70 & 3600 &22.63 &3600 & 2.65 & 3600 & 2.45 & 1\\
\hline
\multirow{4}{*}{(100, 50)} & 1 & 5&  75.43 & 3600 &73.18 &3600 & 0.00 &1 & 0.00 &1\\
&1 & 10&  83.57 & 3600 &81.80 &3600 & 0.04 & 3600 & 0.03 &1\\
  & 10 & 5& 25.80 & 3600 &22.25 &3600 & 0.00 & 1 & 0.00 &1\\
&10 & 10 & 36.68 & 3600 &31.40 &3600 & 0.25 & 3600 & 0.23 &1\\
\hline
\multirow{4}{*}{(100, 80)} & 1 & 5& 69.19 & 3600 &66.73 &3600 &0.00 & 1 & 0.00 &1\\
&1 & 10 & 78.01 & 3600 &75.20 &3600 & 0.00 & 7 & 0.00 &1\\
  & 10 & 5& 25.78 & 3600 &23.60 &3600 & 0.00 & 1&0.00 &1\\
&10 & 10 &36.04 & 3600 &32.57 &3600 & 0.07 & 3600 &0.06 &1\\
\hline
\multirow{4}{*}{(150, 20)} & 1 & 5&  79.95 & 3600 &78.40 &3600 & 0.00 & 8 & 0.00 &1\\
&1 & 10 & 84.63 & 3600&82.14 &3600 & 0.47 & 3600 & 0.59 &1\\
  & 10 & 5& 23.41 & 3600 &21.88 &3600 & 0.00 & 2 & 0.00 &1\\
&10 & 10 & 30.58 & 3600 & 24.48&3600 & 1.79 & 3600 & 1.65 &1\\
\hline
\multirow{4}{*}{(150, 50)} & 1 & 5& 78.67 & 3600 & 76.91&3600 & 0.00 & 1 & 0.00 &1\\
&1 & 10&  86.36 & 3600 & 85.03&3600 & 0.02 & 3600 & 0.02 &1\\
  & 10 & 5& 28.07 & 3600 &25.79 &3600 & 0.00 & 1 & 0.00 &1\\
&10 & 10& 39.70 & 3600 &35.77 &3600 & 0.11 & 3600 & 0.11 &1\\
\hline
\multirow{4}{*}{(150, 80)} & 1 & 5& 75.18 & 3600 &73.05 &3600 & 0.00&1 & 0.00 &1\\
&1 & 10& 83.70 & 3600 & 82.13&3600 & 0.00 & 1 & 0.00 &1\\
  & 10 & 5& 28.69 & 3600 & 25.63&3600 & 0.00 & 1 &0.00 & 1\\
&10 & 10& 40.13 & 3600 &36.23 &3600 & 0.04 & 3600 &0.04 &1\\
\hline
\end{tabular}
\end{table}

\begin{table}[htbp]
\caption{Exact algorithms for \ref{aed} on real data} 
\centering
\label{table:exact_real}
\setlength{\tabcolsep}{1.5pt}\renewcommand{\arraystretch}{0.9}
\begin{tabular}{c|c| r|r|r| r| r| r|r|r|r}
\hline
\multicolumn{1}{c|}{{Dataset}}&\multicolumn{2}{c|}{{Params}}  & \multicolumn{2}{c|}{{Cutting plane via \eqref{aed1}}} & \multicolumn{2}{c|}{{\ref{aed_miscop}}} & \multicolumn{2}{c|}{\Cref{algo:cut} via \eqref{eq:novel}} & \multicolumn{2}{c}{{\Cref{algo:forward_backward}}}\\
\hline
 \multicolumn{1}{c|}{($n$, $d$)}  & \multicolumn{1}{c|}{$\lambda$}  &\multicolumn{1}{c|}{$k$} &
\multicolumn{1}{c|}{MIPGap(\%)}& 
\multicolumn{1}{c|}{time}&
\multicolumn{1}{c|}{MIPGap(\%)}& 
\multicolumn{1}{c|}{time} & 
\multicolumn{1}{c|}{MIPGap(\%)}& 
\multicolumn{1}{c|}{time} & \multicolumn{1}{c|}{Gap(\%)} & \multicolumn{1}{c}{time}\\
\hline
\multirow{4}{*}{(159, 24)} & 1 & 5 &  13.10& 3600& 8.49& 3600& 0.00 & 3548 & 0.02 & 1\\
&1 & 10&  16.14& 3600& 6.86& 3600& 1.64 & 3600 &1.80 & 1\\
  & 10 & 5& 0.00&19 & 0.27& 3600& 0.00 & 4 & 0.04 & 1 \\
&10 & 10&  0.12& 3600& 0.27& 3600& 0.06 & 3600 & 0.06 & 1 \\
\hline
\multirow{4}{*}{(194, 33)} & 1 & 5& 6.65& 3600& 5.93& 3600 &0.00 & 119 & 0.12 & 1 \\
&1 & 10&  9.64& 3600& 3.92& 3600 & 1.15 & 3600 & 1.23 & 1\\
  & 10 & 5& 0.00 & 2& 0.01& 3600 &0.00 & 1 & 0.00 & 1\\
&10 & 10&  0.00& 28& 0.11& 3600 & 0.00 & 3 & 0.00 & 1\\
\hline
\multirow{4}{*}{(200, 22)} & 1 & 5&  16.08& 3600& 0.46& 3600  &0.00 & 416 & 0.08 & 1\\
&1 & 10&  19.70& 3600& 2.27& 3600 & 2.43 & 3600 & 2.27 & 1\\
  & 10 & 5& 0.00& 47& 0.10& 3600 &  0.00 & 1 & 0.14 & 1 \\
&10 & 10&  0.24& 3600& 0.24& 3600 &  0.08 & 3600 & 0.11 & 1 \\
\hline
\multirow{4}{*}{(500, 128)} & 1 & 5&   2.90& 3600& --& -- & 0.00 & 48 & 0.02 & 3\\
&1 & 10&  1.91& 3600& --& -- &0.00 & 441 & 0.01 & 3\\
  & 10 & 5&  0.00& 30& --& -- &0.00 & 1 & 0.00 & 3\\
&10 & 10&  0.00& 26& --& -- &0.00 & 1 & 0.00 & 3\\
\hline
\multirow{4}{*}{(1000, 90)} & 1 & 5&  9.15& 3600 & --& -- & 0.00 & 192 & 0.00 & 11\\
&1 & 10&  20.05& 3600& --& -- &0.10 & 3600 & 0.10 & 11\\
  & 10 & 5& 0.00 &2181 & --& -- & 0.00 & 4 & 0.01 & 11 \\
&10 & 10& 0.37 & 3600&--& -- &0.00 & 26 & 0.00 & 11\\
\hline
\end{tabular}
\end{table}

\subsection{Application to User Cold-Start Recommendation}
This subsection presents an application of \ref{aed} to the user cold-start recommendation problem introduced in \Cref{sec:Intro}. Recall that the system  struggles with predicting preferences for new users without prior ratings. A common solution is to select a small, representative set of items for new users to rate. 
The criterion for selection usually relies on a
statistical measure aimed at minimizing the model mean squared error, most notably A-optimality and D-optimality
(see, e.g., \citealt{chamon2017approximate,rubens2009output,zhao2015theoretical}). These correspond to \ref{aed} and regularized DOD optimization problems. In the following, we compare the performance of \ref{aed}, regularized DOD, and a random selection baseline in addressing the cold-start problem. We apply the widely-used forward greedy algorithm to solve both \ref{aed} and regularized DOD for a fair comparison \citep{krause2008near}.

To understand a new user's movie preferences, we leverage the intuition that their tastes are related to how existing (training) users have rated the same movies \citep{deldjoo2019movie}.
 For each movie $i\in [n]$, the  vector $\bm a_i\in \Re^d$ contains its ratings from $d$ training users. As done in \cite{chamon2017approximate}, we assume that the new user’s ratings can be expressed as a linear combination of the past rating data, modeled as $\bm A^{\top} \bm \theta$,  where $\bm \theta \in \Re^d$ captures how similar the new user is to each training user.  Since $\bm \theta$ is unknown, we select a subset $\hat S\subseteq[n]$ of $k$ movies for the new user to rate, aiming to provide maximal information for estimating $\bm \theta$.
With the estimate $\hat{\bm \theta}$, we  predict the new user's ratings for the remaining $(n-k)$ movies as $\bm a_i^{\top}\hat{\bm \theta}$ for all $i\in [n]\setminus \hat S$. Finally, we evaluate the quality of the selected subset $\hat S$ by  computing the mean squared error (MSE) between predicted and actual ratings. A lower MSE indicates that the selected movies more effectively capture the new user's preferences.

To facilitate testing, we truncate the movie rating data from  \cite{Dooms13crowdrec} by randomly selecting a subset of ratings.
Specifically, for a fixed number of movies, training users, and new users, we repeat the truncation process  15 times with different random seeds. 
\Cref{table:movie_50,table:movie_100} compare the average MSE across all 20 and 50 new users using the three selection methods. To be specific, the average MSE is computed by averaging per-seed MSEs of all new users and then averaging these values across 15 random seeds.
We refer to the MSE values  based on our \ref{aed} method, regularized DOD, and random selection as ``\mbox{A-MSE}", ``{D-MSE}", and ``\text{R-MSE}", respectively. For each dataset, we vary the regularization parameter $\lambda\in \{0.5, 1, 1.5\}$. The random selection method is clearly unaffected by the choice of $\lambda$.
Across all experimental settings,  \ref{aed} consistently yields the lowest or comparable MSE. In contrast, while the D-optimality-based method occasionally outperforms \ref{aed}  in \Cref{table:movie_50}, it tends to generate extremely high MSE in larger-scale settings with 50 new users, as shown in \Cref{table:movie_100} where $n=700$. The random selection method performs the worst overall, particularly in \Cref{table:movie_100}. These results highlight the robustness and effectiveness of the \ref{aed}-based selection for cold-start recommendations, particularly as the number of new users grows.

\begin{table}[htbp]
\centering
\begin{minipage}{0.49\textwidth}
\caption{The average MSE of 20 new users} 
\centering
\label{table:movie_50}
\setlength{\tabcolsep}{2pt}\renewcommand{\arraystretch}{0.95}
\begin{tabular}{c| r | r| r |r }
\hline
\multicolumn{1}{c|}{$(n,d,k)$}  & \multicolumn{1}{c|}{$\lambda$} & \multicolumn{1}{c|}{A-MSE}  & \multicolumn{1}{c|}{D-MSE}  & \multicolumn{1}{c}{R-MSE} \\
\hline
\multirow{3}{*}{(500, 100, 20)} & 0.5 & \textbf{0.0346} & 0.2430 & 0.0460 \\
&1 & \textbf{0.0346} & 0.2430 & 0.0460\\
&1.5 & \textbf{0.0346} & 0.2435 & 0.0460 \\
\hline
\multirow{3}{*}{(600, 100, 20)} & 0.5&  \textbf{0.0368} & 0.0517 & 0.0463 \\
&1 & \textbf{0.0368} & 0.1864 & 0.0463 \\
&1.5 & \textbf{0.0386} & 0.1950 & 0.0463\\
\hline
\multirow{3}{*}{(700, 100, 20)} & 0.5& 0.0364 & \textbf{0.0360} & 0.0450\\
&1 & 0.0361 & \textbf{0.0357} & 0.0450\\
&1.5 & 0.0353 & \textbf{0.0351} & 0.0450\\
\hline
\multirow{3}{*}{(800, 100, 20)} & 0.5& \textbf{0.0366} & 0.0382 & 0.0466 \\
&1 & \textbf{0.0367} & 0.0383 & 0.0466\\
&1.5 & \textbf{0.0378} & 0.0454 & 0.0466\\
\hline
\multirow{3}{*}{(900, 100, 20)} & 0.5& \textbf{0.0383}	&0.0385	&0.0483 \\
&1 & 0.0390	& \textbf{0.0384}	&0.0483\\
&1.5 & 0.0391 & \textbf{0.0382} &0.0483\\
\hline
\end{tabular}
\end{minipage}%
\hfill
\begin{minipage}{0.5\textwidth}
 \caption{The average MSE of 50 new users} 
\centering
\label{table:movie_100}
\setlength{\tabcolsep}{2pt}\renewcommand{\arraystretch}{0.95}
\begin{tabular}{c| r | r| r |r }
\hline
\multicolumn{1}{c|}{$(n,d,k)$}  & \multicolumn{1}{c|}{$\lambda$} & \multicolumn{1}{c|}{A-MSE}  & \multicolumn{1}{c|}{D-MSE}  & \multicolumn{1}{c}{R-MSE} \\
\hline
\multirow{3}{*}{(500, 150, 50)} & 0.5 & \textbf{0.0323} & 0.0355 & 1.65e+26 \\
&1 & \textbf{0.0343} & 0.0355 & 1.65e+26\\
&1.5 & \textbf{0.0343} & 0.0381 & 1.65e+26\\
\hline
\multirow{3}{*}{(600, 150, 50)} & 0.5& \textbf{0.0388} & 0.0578 & 2.35e+26\\
&1 & \textbf{0.0398} & 0.0737 & 2.35e+26\\
&1.5 & \textbf{0.0392} & 0.0752 & 2.35e+26\\
\hline
\multirow{3}{*}{(700, 150, 50)} & 0.5& \textbf{0.0354} & 1.72e+21 & 2.00e+26 \\
&1 & \textbf{0.0354} & 1.72e+21 & 2.00e+26\\
&1.5 & \textbf{0.0356} & 1.72e+21 & 2.00e+26\\
\hline
\multirow{3}{*}{(800, 150, 50)} & 0.5& \textbf{0.0366} & 0.0993 & 3.54e+26 \\
&1 & \textbf{0.0367} & 0.1179 & 3.54e+26 \\
&1.5 & \textbf{0.0367} & 0.1182 & 3.54e+26\\
\hline
\multirow{3}{*}{(900, 150, 50)} & 0.5& 0.0361	& \textbf{0.0355}	&1.23e+26 \\
&1 & \textbf{0.0361}	&0.0425	&1.23e+26 \\
&1.5 & \textbf{0.0361} &0.0480 &1.23e+26\\
\hline
\end{tabular}
\end{minipage}
\end{table}

\section{Conclusion}
We study the regularized A-optimal design (RAOD) problem  and prove its NP-hardness for the first time.  A key insight from our results is that the performance of both formulations and algorithms for RAOD hinges on whether the selection size $k$ exceeds the data dimension $d$. Specifically, we demonstrate that the two existing relaxations perform well in only one of the two ranges ($k \le d-1$ or $k \ge d$), but poorly in the other. To address this, we propose a novel convex integer formulation that yields a stronger relaxation with provable performance guarantees for all  $k$. This formulation significantly accelerates the exact cutting-plane algorithm in small-$k$ and high-dimensional settings. We further investigate the complementary forward and backward greedy algorithms, tailored respectively to the ranges $k\le d-1$ and $k\ge d$.  Our numerical experiments demonstrate the effectiveness and efficiency of the proposed algorithms.
Our work can be seen as a non-sequential experimental design scheme, where the data points are selected at once.  
A possible future direction is to explore adaptive strategies that select points sequentially based on prior selections.

\newpage

  \bibliographystyle{informs2014}
\bibliography{reference.bib}
\newpage
\begin{APPENDICES}
\newpage
\renewcommand{\theHsection}{A\arabic{section}}

\section{Additional proof}
\subsection{Proof of \Cref{prop:amesp}}
\label{proof:amesp}
\begin{proof}{Proof.}
We prove the result by showing that the objective functions of \ref{aed} and  \ref{amesp} are equivalent.
    For any  solution $S$ of \ref{aed}, let $\sigma_1\ge \cdots\ge \sigma_d\ge 0$  and $\beta_1\ge \cdots\ge \beta_k \ge 0$ denote the eigenvalues of $\bm A_S \bm A_S^{\top}\in \S_+^d$ and $\bm A_S^{\top} \bm A_S\in \S_+^k$, respectively. Then, our analysis is split into two parts depending on whether $k\le d$ holds or not.
    \begin{enumerate}[(i)]
        \item $k\in [d]$. Because $\bm A_S \bm A_S^{\top}\in \S_+^d$ and $\bm A_S^{\top} \bm A_S\in \S_+^k$ admit the same nonzero eigenvalues,  their eigenvalues satisfy $\sigma_i=\beta_i$ for any $i\in [k]$ and $\sigma_i=0$ for all $i\in [k+1,d]$. Given this, we have that
    \begin{align*}
  f(S) &=  \tr \left( \left(\bm A_S \bm A_S^{\top} + \lambda \bm I_d\right)^{-1} \right) = \sum_{i\in [d]} \frac{1}{\sigma_i+\lambda} = \sum_{i\in [k]} \frac{1}{\beta_i+\lambda} + \frac{d-k}{\lambda}\\
  &= \tr \left( \left( \bm A_S^{\top} \bm A_S + \lambda \bm I_k\right)^{-1} \right) + \frac{d-k}{\lambda} =\tr \left( \left( \bm C_{S,S}\right)^{-1} \right) + \frac{d-k}{\lambda},
    \end{align*}
     where the first and the last equations follow from the definitions of $\bm A$ and $\bm C$, respectively.

     \item $k\in [d+1,n]$. Analogously, according to the relation of the eigenvalues between $\bm A_S \bm A_S^{\top}\in \S_+^d$ and $\bm A_S^{\top} \bm A_S\in \S_+^k$,  we get $\sigma_i=\beta_i$ for any $i\in [d]$ and $\beta_i=0$ for all $i\in [d+1,k]$. Then, we have that
    \begin{align*}
  f(S) &=  \tr \left( \left(\bm A_S \bm A_S^{\top} + \lambda \bm I_d\right)^{-1} \right) = \sum_{i\in [d]} \frac{1}{\sigma_i+\lambda} = \sum_{i\in [k]} \frac{1}{\beta_i+\lambda} - \frac{k-d}{\lambda}\\
  &= \tr \left( \left( \bm A_S^{\top} \bm A_S + \lambda \bm I_k\right)^{-1} \right) + \frac{d-k}{\lambda} =\tr \left( \left( \bm C_{S,S}\right)^{-1} \right) + \frac{d-k}{\lambda}.
    \end{align*}
We thus complete the proof. \qed
    \end{enumerate}
\end{proof}

\subsection{Proof of \Cref{them:hard}}\label{proof:hard}
\begin{proof}{Proof.}
In order to prove the result, we  reduce the NP-hard independent set decision problem to  \ref{amesp}. Given a simple undirected graph $G([n], E)$, we define a symmetric matrix $\bm C$ as
\[ C_{ij} =  \begin{cases}
   n & \text{if } i=j;\\
    1 & \text{if } (i,j)\in E;\\
    0 & \text{Otherwise}.
\end{cases} \]
For all $i\in [n]$, it is easy to verify that $C_{ii}=n>n-1\ge \sum_{j\neq i}|C_{ij}|$, implying that
 $\bm C$ is a strictly diagonally dominant matrix. Hence, the matrix $\bm C$ is positive definite, and so is any principal submatrix of $\bm C$. Besides, we can express $\bm C=\bm A\bm A^{\top} +\lambda \bm I_n$ in the form of the objective matrix in \ref{amesp}, where $\lambda$ equals the smallest eigenvalue of $\bm C$, and $\bm A$ denotes the Cholesky factor of $\bm C-\lambda\bm I_n$. Therefore, the resulting pair $(\bm A, \lambda)$ can be used as an input instance of \ref{aed} such that \ref{aed} is equivalent to \ref{amesp}. 
 
 For any subset $S\subseteq [n]$, the subgragh $G_S$ comprises the vertices $S$ and edges in the set $E\cap (S\times S)$, and then, the subgragh $G_S$ can be represented by the principal submatrix $\bm C_{S,S}$. 
 
 Next, for a subset $S$ of size $k$, we compute the trace of the inverse of $\bm C_{S,S}$ in two cases.
\begin{enumerate}
    \item[(i)] If $S$ is a sized-$k$ independent set for the graph $G$, according to the property of independent sets, the resulting subgraph $G_S$ has no edges, that is, $(i,j)\notin E$ for any two vertices $i,j$ in $S$. Therefore, the corresponding submatrix $\bm C_{S,S}$ to $G_S$ must be diagonal, and we can easily compute $\tr((\bm C_{S,S})^{-1}) =\sum_{i\in S} 1/C_{ii} = k/n$.
\item[(ii)] If $S$ is not a sized-$k$ independent set, then, unlike Part (i), there is at least one nonzero off-diagonal entry in $\bm C_{S,S}$. Suppose that the vector $\bm \sigma\in \Re_{++}^k$ contains the eigenvalues of $\bm C_{S,S}$. It follows that 
$$\|\bm C_{S,S}\|_F^2 = \sum_{i\in [k]} \sigma_i^2=\sum_{i\in S}\sum_{j\in S}C_{ij}^2 \ge kn^2+2.$$ In addition, we have that $\sum_{i\in [k]}\sigma_i=\tr(\bm C_{S,S})=kn$. As a result,
not all entries of $\bm \sigma$  can be equal. Otherwise, we would find that $\sum_{i\in [k]}\sigma_i^2=kn^2$, a contradiction with the inequality above.

According to \Cref{ineq:amhm}, we get
\[ \frac{k}{\tr((\bm C_{S,S})^{-1}) } = \frac{k}{\sum_{i\in [k]} \frac{1}{\sigma_i} } < \frac{\sum_{i\in [k]} \sigma_i}{k}   = n \ \  \Longrightarrow \ \ \tr((\bm C_{S,S})^{-1}) > \frac{k}{n}, \]
where the first inequality is strict because the entries of $\bm \sigma$ are not all equal.
\end{enumerate}

By combining the results in Parts (i) and (ii), we conclude that  if  \ref{amesp} yields an optimal value of $k/n+(d-k)/\lambda$, then there exists an independent set of  size $k$ in the graph $G$. On the other hand, if the optimal value is larger than $k/n+(d-k)/\lambda$,  the independent set decision problem is false. We thus complete the proof.
 \qed
\end{proof}

\subsection{Proof of \Cref{lem:techniques}}\label{proof:lem_tech}

\begin{proof}{Proof.}
For each $i\in S$, let us consider the following probability to sample $i$ out of $S$, proposed by \cite{derezinski2017subsampling}:
\[\P(i|S)=\frac{1-\bm a_i^{\top}\left(\bm A_{S} \bm A_{S}^{\top} + \lambda\bm I_d\right)^{-1}\bm a_i}{s-\tr\left(\bm A_S^{\top}\left(\bm A_{S} \bm A_{S}^{\top} + \lambda\bm I_d\right)^{-1} \bm A_S\right)}.\]
It is easy to verify that $\sum_{i\in S}\P(i|S)=1$. Then, the expected function value of removing an element from $S$ is equal to
\begin{equation}\label{eq:expect}
\begin{aligned}
\E\left[f(S\setminus\{i\})\right]& = \sum_{i\in S} \P(i|S)f(S\setminus\{i\})=   \sum_{i\in S} \P(i|S) \tr \left(\left(\bm A_{S} \bm A_{S}^{\top} -\bm a_i\bm a_i^{\top} + \lambda \bm I_d \right)^{-1}\right)\\
  &=\sum_{i\in S} \P(i|S) \left( f(S) + \frac{ \bm a_i^{\top}   \left(\bm A_{S} \bm A_{S}^{\top} + \lambda \bm I_d\right)^{-2} \bm a_i }{1-\bm a_i^{\top} \left(\bm A_{S} \bm A_{S}^{\top}+\lambda \bm I_d\right)^{-1} \bm a_i}\right)  \\
  &= f(S)  + \frac{\tr\left( \bm A_S^{\top}   \left(\bm A_{S} \bm A_{S}^{\top} + \lambda \bm I_d\right)^{-2} \bm A_S\right)}{s- \tr\left(\bm A_S^{\top}\left(\bm A_{S} \bm A_{S}^{\top}+\lambda\bm I_d\right)^{-1} \bm A_S\right)} \\
  &=f(S) \left( 1 + \frac{\tr\left(   \left(\bm A_{S} \bm A_{S}^{\top} + \lambda \bm I_d \right)^{-2} \bm A_S \bm A_S^{\top}\right)}{\tr \left( \left(\bm A_{S} \bm A_{S}^{\top} + \lambda \bm I_d \right)^{-1}  \right) \left(s- \tr\left(\left(\bm A_{S} \bm A_{S}^{\top}+\lambda\bm I_d\right)^{-1} \bm A_S \bm A_S^{\top}\right)\right)} \right),
\end{aligned}
\end{equation}
where the second equation is based on the Sherman–Morrison formula,  the third equation is from the expression of $\P(i|S)$, and the last one results from the cyclic property of  trace operator. 

Observe that the matrix $\bm A_{S} \bm A_{S}^{\top}\in \S_+^d$ has a rank of at most $\min\{s,d\}$; hence, it has at least $d-\min\{s,d\}$ zero eigenvalues.
Let  $\bm A_{S} \bm A_{S}^{\top} = \bm Q \bm \Lambda \bm Q^{\top}$ be the eigen-decomposition of $\bm A_{S} \bm A_{S}^{\top}$, where $\bm \Lambda\in \S_{+}^{d}$ is a diagonal matrix comprising $\min\{s,d\}$ nonnegative eigenvalues $\sigma_1, \cdots, \sigma_{\min\{s,d\}}$ and zero eigenvalues $\sigma_{\min\{s,d\}+1}=\cdots=\sigma_d=0$ of $\bm A_{S} \bm A_{S}^{\top}$. Given the eigen-decomposition of $\bm A_S\bm A_S^{\top}$, we can show that
\begin{equation}\label{eq:matrices}
\begin{aligned}
 &M_1:=\tr\left( \left(\bm A_{S} \bm A_{S}^{\top} + \lambda \bm I_d\right)^{-2} \bm A_S \bm A_S^{\top}\right)= \tr\left( \left(\bm \Lambda + \lambda \bm I_d\right)^{-2} \bm \Lambda \right)= \sum_{i\in[\min\{s,d\} ]} \frac{\sigma_i}{(\lambda+\sigma_i)^2},   \\ 
 & M_2:=\tr \left( \left(\bm A_{S} \bm A_{S}^{\top} + \lambda \bm I_d \right)^{-1}  \right) =\tr\left(\left(\bm \Lambda+\lambda\bm I_d\right)^{-1} \right)=\sum_{i\in [\min\{s,d\} ]} \frac{1}{\lambda+\sigma_i} + (d-\min\{s,d\})\frac{1}{\lambda}, \text{ and}\\
 &M_3:= \tr\left(\left(\bm A_{S} \bm A_{S}^{\top}+\lambda\bm I_d \right)^{-1} \bm A_S \bm A_S^{\top}\right) =\tr\left( \left(\bm \Lambda + \lambda \bm I_d\right)^{-1} \bm \Lambda \right) = \sum_{i\in [\min\{s,d\}]} \frac{\sigma_i}{\lambda+\sigma_i}.
\end{aligned}
\end{equation}

Replacing the expressions on the right-hand side of \eqref{eq:expect} with $M_1, M_2, M_3$ leads to
\begin{align}\label{eq:bound}
    \min_{i\in S} f(S\setminus\{i\}) \le \E\left[f(S\setminus\{i\})\right] = f(S)\left(1+\frac{M_1}{M_2\left(s- M_3\right)} \right). 
\end{align}
To prove the result, we will bound the value of  $\frac{M_1}{M_2\left(s- M_3\right)}$ from above in two cases. 

\noindent\textbf{Case I:} $s\le d-1$. 
To begin, let us establish a lower bound: 
\begin{align}\label{ineq:lb}
\sum_{i\in [s]} \frac{\lambda}{\lambda+\sigma_i} \ge \frac{s^2\lambda}{\sum_{i\in [s]} (\sigma_i+ \lambda)} = \frac{s^2\lambda}{\sum_{i\in S}\|\bm a_i\|_2^2+s\lambda}\ge \frac{s^2\lambda}{s\max_{i\in [n]}\|\bm a_i\|_2^2+ s\lambda}=s\rho, 
\end{align}
where the first inequality is from \Cref{ineq:amhm}, and the second equality is  from the fact that $\sigma_{s+1}=\cdots=\sigma_d=0$. Next,
based on \eqref{eq:matrices} and \eqref{ineq:lb}, we have that
\begin{equation}\label{eq:sless}
    \begin{aligned}
\frac{M_1}{M_2\left(s- M_3\right)} = 
 \frac{\sum_{i\in [s]} \frac{\sigma_i}{(\lambda+\sigma_i)^2}}{\left(\sum_{i\in [s]} \frac{1}{\lambda+\sigma_i} + (d-s)\frac{1}{\lambda}\right) \left(\lambda\sum_{i\in [s]} \frac{1}{\lambda+\sigma_i}\right)} \le  \frac{1}{\sum_{i\in [s]} \frac{\lambda}{\lambda+\sigma_i} + d-s} \le \frac{1}{s\rho+d-s} ,  
\end{aligned}
    \end{equation}
where the first equation is obtained from plugging the expressions of $M_1,M_2$, and $M_3$ in \eqref{eq:matrices}, the second inequality is because $\frac{\sigma_i}{(\lambda+\sigma_i)^2}\le\frac{1}{\lambda+\sigma_i}$ for each $i\in [s]$, and the last one is from \eqref{ineq:lb}.
  
\noindent\textbf{Case II:} $s\ge d$. Analogous to \eqref{ineq:lb} and \eqref{eq:sless}, we can show that
\begin{align*}
\sum_{i\in [d]} \frac{\lambda}{\lambda+\sigma_i} \ge \frac{d^2\lambda}{\sum_{i\in [d]} (\sigma_i+ \lambda)} = \frac{d^2\lambda}{\sum_{i\in S}\|\bm a_i\|_2^2+d\lambda}\ge \frac{d^2\lambda}{s\max_{i\in [n]}\|\bm a_i\|_2^2+ d\lambda}=d\rho, 
\end{align*} 
and
\begin{align}\label{eq:greater}
\frac{M_1}{M_2\left(s- M_3\right)} = 
\frac{\sum_{i\in [d]} \frac{\sigma_i}{(\lambda+\sigma_i)^2}}{\left(\sum_{i\in [d]} \frac{1}{\lambda+\sigma_i}\right) \left(s-d+d-\sum_{i\in [d]} \frac{\sigma_i}{\lambda+\sigma_i}\right)} \le \frac{1}{s-d+\sum_{i\in [d]} \frac{\lambda}{\lambda+\sigma_i}} \le \frac{1}{s-d+ d\rho}.
\end{align}

By plugging the results of  \eqref{eq:sless} and \eqref{eq:greater} into \eqref{eq:bound}, we  complete the proof.
\qed
\end{proof}

\subsection{Proof of \Cref{cor:gap}}
\label{proof:gap}
\begin{proof}{Proof.}
{Part (i).} Let $S$ be an optimal solution of \ref{aed} with the selection size $s\in [d,n]$. Then, $|S|=s$ and $f(S)=z_s$. Based on \Cref{lem:techniques}, we have 
\begin{align}\label{eq:seq}
    z_{s-1} \le \min_{i\in S} f(S\setminus\{i\}) \le
\left(1+\frac{1}{d\rho+s-d}\right) f(S)=\frac{d\rho+s-d+1}{d\rho+s-d} z_{s},
\end{align}
 where the first inequality is because $S\setminus\{i\}$ is a feasible solution to \ref{aed} at $s-1$.

It is clear that ${\nu}_k^{\text{C}}$ must be at least as large as the objective value of selecting all data points, that is, $\nu_k^{\text{C}} \ge z_n$. Then, sequentially applying \eqref{eq:seq} with $s=k+1,\cdots,n$ leads to
    \begin{align*}
 z_{k} \le \prod_{s\in [k+1,n]} \frac{s-d+1+d\rho}{s-d+d\rho} z_n = \frac{n-d+1+d\rho}{k+1-d+d\rho} z_n \le \frac{n-d+1+d\rho}{k-d+1+d\rho} \nu_k^{\text{C}} \le \frac{n-d+1}{k-d+1} \nu_k^{\text{C}},
\end{align*} 
where the last inequality follows immediately because of $\rho>0$. 

Part (ii). We construct a worst-case example of \ref{aed-R1} to demonstrate how an unbounded optimality gap occurs for $k\in [d-1]$.
\begin{example}\label{eg:r1}
Fix $d=n$. Let $\bm a_i = \bm e_i$ for all 
$i\in [n]$, where $\bm e_i$ represents the $i$-th column  of $\bm I_d$.\qedA
\end{example}
In \Cref{eg:r1}, for a given $k\in [d-1]$,  any feasible solution is optimal for  \ref{aed}, since 
\[\tr\bigg(\bigg(\sum_{i\in S} \bm a_i\bm a_i^{\top}+ \lambda \bm I_d\bigg)^{-1}\bigg) = \tr\bigg(\bigg(\sum_{i\in S} \bm e_i\bm e_i^{\top}+ \lambda \bm I_d\bigg)^{-1}\bigg) = \frac{k}{1+\lambda} + \frac{d-k}{\lambda},  \]
holds for any subset $S\subseteq [n]$ with $|S|=k$.
Hence, the optimal value of \ref{aed} is $z_k={k}/{(1+\lambda)}+{(d-k)}/{\lambda}$. On the other hand, the vector $\hat{\bm x}$ is feasible for \ref{aed-R1}, where let $\hat{x}_i=k/n$ for any $i\in [n]$. Plugging $\hat{\bm x}$ into \ref{aed-R1}, we get $$\nu_k^{\text{C}}\le \tr\bigg(\bigg(\sum_{i\in [n]} \frac{k}{n} \bm a_i\bm a_i^{\top}+ \lambda \bm I_d\bigg)^{-1}\bigg) = \tr\left( \frac{k}{n} \bm I_d+ \lambda\bm I_d \right) = \frac{dn}{k+n\lambda} \le \frac{dn}{k} .$$
It follows that $$\frac{z_k}{\nu_k^{\text{C}}}\ge \frac{\frac{k}{1+\lambda}+\frac{d-k}{\lambda}}{\frac{dn}{k}}\ge \frac{\frac{d-k}{\lambda}}{\frac{dn}{k}} = \frac{k(d-k)}{dn\lambda} \to \infty, \text{ if } \lambda\to 0.$$
We thus complete the proof.
\qed
\end{proof}

\subsection{Proof of \Cref{cor:amesp_gap}}
\label{proof:amesp_gap}
\begin{proof}{Proof.}
The proof includes two parts.
\begin{enumerate}[(i)]
    \item We first establish a-${(d-1)}/{(d-k)}$ optimality gap for any $k\in [d-1]$. The proof includes two steps- deriving a lower bound of ${\nu}^{\text{M}}_k$ and an upper bound of $z_k$.

\noindent \textbf{Step I.} Let $\hat{\bm x}$ be an optimal solution to \ref{amesp-R}. Suppose that $\beta_1\ge \cdots \ge \beta_n\ge 0$ denote the eigenvalues of $\sum_{i\in [n]}\hat x_i \bm h_i\bm h_i^{\top}$. 
Then, we have that
\begin{equation}\label{eq:trace}
\begin{aligned}
&\sum_{i\in [n]} \beta_i =\tr\bigg(\sum_{i\in [n]} \hat x_i \bm h_i\bm h_i^{\top}\bigg)= \tr\left(\bm H\Diag(\bm x) \bm H^{\top}\right)= \tr\left(\Diag(\sqrt{\hat{\bm x}})\bm H^{\top}\bm H\Diag(\sqrt{\hat{\bm x}})\right)\\
&=\tr\left(\Diag(\sqrt{\hat{\bm x}})(\bm A^{\top}\bm A+\lambda\bm I_n) \Diag(\sqrt{\hat{\bm x}})\right)=\tr\bigg(\sum_{i\in [n]}\hat x_i \bm a_i\bm a_i^{\top} \bigg) + \lambda \tr\left(\Diag({\hat{\bm x}})\right)\\
&=\sum_{i\in[n]}\hat x_i\|\bm a_i\|_2^2 + k\lambda,
\end{aligned}
\end{equation}
where the third and the fifth equations are from the cyclic property of the trace operator, and the fourth equation is from the definition of $\bm H^{\top}\bm H$.

Next, we show that
\begin{equation*}
\begin{aligned}
 {\nu}^{\text{M}}_k -\frac{d-k}{\lambda} &= \Phi \bigg(\sum_{i\in [n]} \hat x_i \bm h_i\bm h_i^{\top} \bigg) = \sum_{i\in [\iota]} \frac{1}{\beta_i} + \frac{k-\iota}{\frac{1}{k-\iota}\sum_{i\in [\eta+1, n]} \beta_i} \ge  \frac{k^2}{\sum_{i\in [n]} \beta_i} = \frac{k^2}{\sum_{i\in[n]}\hat x_i\|\bm a_i\|_2^2 + k\lambda} \\
   &\ge  \frac{k^2}{k\max_{i\in [n]} \|\bm a_i\|_2^2 + k\lambda}  = \frac{k}{\max_{i\in [n]} \|\bm a_i\|_2^2 + \lambda},
\end{aligned}
\end{equation*}
where the second equation is from \Cref{def:phi} of $\Phi$, the first inequality is from \Cref{ineq:amhm},  the third equation is because of  \eqref{eq:trace}, and the second inequality stems from the fact that $\sum_{i\in [n]} \hat{x}_i=k$.

\noindent \textbf{Step II.} As noted in \Cref{remark:mono}, $z_k\le z_1$ holds. We thus get
\begin{equation*}
    \begin{aligned}
        z_k &\le    z_1 = \min_{i\in [n]} \tr \bigg( \bigg( \bm a_i\bm a_i^{\top}+ \lambda \bm I_d\bigg)^{-1} \bigg) = \min_{i\in [n]} \frac{1}{\|\bm a_i\|_2^2+\lambda} + \frac{d-1}{\lambda} = \frac{1}{\max_{i\in [n]}\|\bm a_i\|_2^2+\lambda} + \frac{d-1}{\lambda}. 
    \end{aligned}
\end{equation*}
Combing Steps I and II, we obtain that
\[ \frac{z_k}{{\nu}^{\text{M}}_k} \le \frac{\frac{1}{\max_{i\in [n]}\|\bm a_i\|_2^2+\lambda} + \frac{d-1}{\lambda} }{\frac{k}{\max_{i\in [n]}\|\bm a_i\|_2^2 + \lambda} + \frac{d-k}{\lambda}} = \frac{\lambda+ (d-1)(\max_{i\in [n]}\|\bm a_i\|_2^2 + \lambda) }{k\lambda+ (d-k)(\max_{i\in [n]}\|\bm a_i\|_2^2+ \lambda)} \le \frac{d-1}{d-k},\]
where the last step is because for any $c_1\ge c_2$, it is easy to verify that $\frac{c_1+\lambda}{c_2+k\lambda}
\le \frac{c_1}{c_2}$.

 Furthermore,  according to \citet[Theorem 11]{li2024best}, we have that 
$$1\le \frac{z_k - \frac{d-k}{\lambda}}{{\nu}^{\text{M}}_k-\frac{d-k}{\lambda}}\le \min\{k, n-k+1\}.$$
For any $k\in [d]$, since $d-k\ge 0$, it is evident that the inequality still holds after removing the term $-\frac{d-k}{\lambda}$. We thus obtain a $\min\{k, n-k+1\}$ optimality gap for \ref{amesp-R} when $k\in [d]$.

Combining the optimality gaps for any $k\in [d-1]$, we obtain that
\[ \frac{z_k}{{\nu}^{\text{M}}_k} \le \min\left\{\frac{d-1}{d-k}, k, n-k+1\right\}.\]
Notably,  the ratio of $k$ can be omitted because 
\[k-\frac{d-1}{d-k} = \frac{1}{d-k} (kd-k^2-d+1) = \frac{1}{d-k} (k-1)(d-k-1)  \ge 0. \]

\item To prove the result, we construct an example where ${\nu}^{\text{M}}_k<0$, as shown below.
\begin{example}\label{eg:amesp}
Suppose $d=1$, $k=2$,  $n=3$, $\lambda=1$, $\bm a_1=1$,   $\bm a_2=-1$, and $\bm a_3=2$. 
\qedA
\end{example}
In \Cref{eg:amesp}, we observe that $\bm A\bm A^{\top}=\sum_{i\in [n]}\bm a_i\bm a_i^{\top}=6$. Consequently,  the matrix $\bm A^{\top}\bm A\in \S_+^n$ has eigenvalues: $6, 0, 0$, and $\bm C = \bm A^{\top}\bm A + \lambda\bm I_n$ has eigenvalues: $7, 1, 1$.

We construct a feasible solution $\hat{\bm x}$ to \ref{amesp-R} that yields a negative objective value for the particular instance in \Cref{eg:amesp}. Specifically, we set $\hat x_i=k/n=2/3$ for any $i\in [n]$. Thus,  $\sum_{i\in [n]}\hat x_i \bm h_i\bm h_i^{\top}=2/3\bm H\bm H^{\top}$.  
For any matrix $\bm X$, it is established that $\bm X\bm X^{\top}$ and $\bm X^{\top}\bm X$ have the same nonzero eigenvalues. According to this result,  given $\bm H^{\top}\bm H = \bm C$, it is easy to verify that  $2/3\bm H\bm H^{\top}$ has eigenvalues: $14/3, 2/3, 2/3$. Clearly, $14/3>4/3$ holds.
From \Cref{def:phi}, we get $\iota=1$ in this case, and we thus have that
\begin{align*}  \Phi\bigg(\sum_{i\in [n]}\hat x_i \bm h_i\bm h_i^{\top} \bigg)+\frac{d-k}{\lambda}= \frac{3}{14} + \frac{3}{4} -1=-\frac{1}{28}  <0.
\end{align*}
At optimality of \ref{amesp-R}, we must have  ${\nu}^{\text{M}}_k<0$ in \Cref{eg:amesp}.
\end{enumerate}
Thus, we complete the proof.
\qed
\end{proof}

\subsection{Proof of \Cref{lem:socp}}
\label{proof:socp}
\begin{proof}{Proof.}
First, define $\bm B = \bm A \Diag(\sqrt{\bm x})\in \Re^{d\times n}$. The minimization problem in \Cref{lem:socp} can be written as 
\begin{align*}
	\min_{\bm X \in \Re^{d\times n}} \frac{1}{\lambda}\tr\left[\bm X \left(\bm B^{\top}\bm B  + \lambda\bm I_n \right)  \bm X^{\top}\right] - \frac{2}{\lambda} \tr\left(\bm X \bm B^{\top}\right)  + \frac{d}{\lambda},
\end{align*}
which is 
a strongly convex and unconstrained problem over $\bm X$. Its first-order optimality condition gives an optimal solution $\bm X^*$. We get
\[\frac{2}{\lambda}\bm X^* \left( \bm B^{\top}\bm B  + \lambda \bm I_n \right)  = \frac{2}{\lambda} \bm B,\]
which leads to $\bm X^* = \bm B \left( \bm B^{\top}\bm B  + \lambda \bm I_n \right)^{-1}$.
Plugging $\bm X^*$ into the minimization problem above,  the optimal value is equal to
\begin{equation*}
	\begin{aligned}
		-\frac{1}{\lambda} \tr \left[ \bm B \left(  \bm B^{\top} \bm B  + \lambda \bm I_n \right)^{-1} \bm B^{\top}  \right] + \frac{d}{\lambda} = \tr \left[ \left(\bm B\bm B^{\top}  + \lambda \bm I_d \right)^{-1}  \right]	=  \tr \bigg[\bigg(\sum_{i\in [n]} x_i \bm a_i\bm a_i^{\top}+ \lambda \bm I_d\bigg)^{-1} \bigg],
	\end{aligned}
\end{equation*}
where the first equation is due to  the Woodbury matrix identity, i.e., 
\[\left(\bm B\bm B^{\top}  + \lambda \bm I_d \right)^{-1}  = \frac{1}{\lambda} \bm I_d - \frac{1}{\lambda}  \bm B (\bm B^{\top}\bm B + \lambda \bm I_n )^{-1}\bm B^{\top},\]
and the second equation is  because of  the definition of $\bm B$. This completes the proof.
\qed
\end{proof}

\subsection{Proof of \Cref{prop:misocp}}
\label{proof:misocp}
\begin{proof}{Proof.}
According to the identity in \Cref{lem:socp}, we can reformulate \eqref{aed1} as 
	\begin{align}\label{aed_soc1}
	z_k=\min_{\begin{subarray}{c}
		\bm x \in \{0,1\}^n, 
        \bm X \in \Re^{d\times n}
		\end{subarray}
}  \bigg\{\frac{1}{\lambda} \left\|\bm X \Diag\left(\sqrt{\bm x}\right) \bm A^{\top} - \bm I_{d}\right\|_F^2 +\|\bm X\|^2_F: \sum_{i\in  [n]} x_i=k \bigg\},
	\end{align}
    Next, we  show that  \eqref{aed_soc1} and \ref{aed_miscop} can be transformed into each other in an equivalent manner.

    \begin{enumerate}[(i)]
        \item Let $(\bm X^*, \bm x^*, \bm \mu^*)$ be an optimal solution of \ref{aed_miscop}. Then, $\|\bm X^*_i\|_2^2=\mu^*_ix^*_i$ must hold for all $i\in [n]$.  
        
Next, let us construct a matrix $\hat{\bm X}$.        For each $i, j\in [n]$, if $x^*_i=0$, let $\hat{X}_{ij} =0$, and if $x^*_i\neq 0$, let $\hat{X}_{ij} = {X_{ij}^*}/{\sqrt{x}_i}$. Then it is easy to check that
\begin{align*}
    &\|\hat{\bm X}_i\|_2^2=\frac{\|\bm X^*_i\|_2^2}{x_i^*}=\mu^*_i, \forall i\in [n], \quad \text{and}\\
    &\frac{1}{\lambda} \left\|\hat{\bm X} \Diag\left(\sqrt{\bm x^*}\right) \bm A^{\top} - \bm I_{d}\right\|_F^2 +\|\hat{\bm X}\|^2_F = \frac{1}{\lambda} \|\bm X^* \bm A^{\top} - \bm I_{d}\|_F^2 +\sum_{i\in [n]}
	\mu^*_i, 
\end{align*}
which implies that $(\hat{\bm X}, \bm x^*) $ is a feasible solution to \eqref{aed_soc1} with the same objective value as \ref{aed_miscop}.
\item Let $(\hat{\bm X}, \hat{\bm x})$ be an optimal solution of \eqref{aed_soc1}. If $\hat x_i=0$ for any $i\in [n]$, then $\hat{\bm X}_i=\bm 0$, because in this context, $\hat{\bm X}_i$ only affects the Frobenius norm of $\hat{\bm X}$ in the objective function of \eqref{aed_soc1}. Then, we define $\bm X^*$ and $\bm \mu^*$ as $\bm X^*= \hat{\bm X} \Diag(\sqrt{\hat {\bm x}})$ and
$\mu^*_i=  \|\hat{\bm X}_i\|_2^2$ for all $i\in [n]$. We get $\|\bm X^*_i\|_2^2 = \|\hat{\bm X}_i\|_2^2 \hat{x}_i = \mu^*_i  \hat{x}_i$ for each $i \in [n]$, implying that $(\bm X^*, \hat{\bm x}, \bm \mu^*)$ is feasible for \ref{aed_miscop}. Analogous to Part (i),  we can demonstrate that $(\bm X^*, \hat{\bm x}, \bm \mu^*)$ gives the same objective value for \ref{aed_miscop} as \eqref{aed_soc1}. 
    \end{enumerate}
Hence, \ref{aed_miscop} and \eqref{aed_soc1} are equivalent. The analysis above is independent of the binary property of $\bm x$, and thus can be readily extended to show the equivalence of \ref{aed-R1} and \eqref{aed_scop}. \qed 
\end{proof}

\subsection{Proof of \Cref{lem:obj}}
\label{proof:lem_obj}
\begin{proof}{Proof.}
Let $\sigma_1\ge \cdots \ge \sigma_d\ge 0$ denote the eigenvalues of $\sum_{i\in [n]} x_i \bm a_i\bm a_i^{\top}$. Since the rank of $\sum_{i\in [n]} x_i \bm a_i\bm a_i^{\top}\in \S_+^d$ must be bounded by $\tilde k$, we have that $\sigma_{\tilde k+1}=\cdots =\sigma_d=0$. Accordingly, the objective matrix $(\sum_{i\in [n]} x_i \bm a_i\bm a_i^{\top} + \lambda\bm I_d )$ in \eqref{aed1} has eigenvalues $\sigma_1+\lambda, \cdots \sigma_{\tilde k}+\lambda, \lambda, \cdots, \lambda$. Then, we have that
\[ \tr\bigg (\bigg(\sum_{i\in [n]} x_i \bm a_i\bm a_i^{\top} + \lambda\bm I_d \bigg)^{-1}\bigg) = \sum_{i\in [\tilde k]} \frac{1}{\sigma_i+\lambda} + \frac{d-\tilde k}{\lambda} = \Gamma \bigg(\sum_{i\in [n]} x_i \bm a_i\bm a_i^{\top};\lambda \bigg) + \frac{d-\tilde k}{\lambda}. \]
We thus complete the proof.
\qed
\end{proof}

\subsection{Proof of \Cref{remark:nonconvex}}
\label{proof:nonconvex}
\begin{proof}{Proof.}
    Suppose $d=2$, $k=1$, and $\lambda=1$. Given $\bm X_1=\begin{pmatrix}
        1&0\\
        0&0
    \end{pmatrix}$ and $\bm X_2=\begin{pmatrix}
        0&0\\
        0&1
    \end{pmatrix}$, we have that
    \[ \Gamma\left(\frac{1}{2}\bm X_1+ \frac{1}{2}\bm X_2;\lambda\right) = \frac{1}{\frac{1}{2}+1} = \frac{2}{3} > \frac{1}{2} \Gamma(\bm X_1;\lambda) + \frac{1}{2} \Gamma(\bm X_2;\lambda) = \frac{1}{2},  \]
    where the first equation is because $1/2$ is the largest eigenvalue of the matrix ${1}/{2}\bm X_1+ {1}/{2}\bm X_2$. The strict inequality violates the function convexity. Thus, $\Gamma$ is nonconvex. \qed
\end{proof}

\subsection{Proof of \Cref{lem:inv}}
\label{proof:inv}
\begin{proof}{Proof.}
According to \Cref{lem:eta}, the conditions that define $\eta$ are $\sigma_{\eta} > \frac{1}{\tilde k-\eta} \sum_{i\in [\eta+1,d]} \sigma_i \ge \sigma_{\eta+1}.$ 
By adding  $\lambda>0$ to all terms, the inequalities still hold. It follows that $\sigma^+_{\eta} > \frac{1}{\tilde k-\eta} \sum_{i\in [\eta+1,d]} \sigma^+_i \ge \sigma^+_{\eta+1}$. Given $\sigma^+_i=0$ for all $i\in [d+1,s]$, the result follows immediately from replacing $d$ with $s$.
\qed
\end{proof}

\subsection{Proof of \Cref{prop:conv}}
\label{proof:conv}
\begin{proof}{Proof.}
First, we define a new vector $\bm \sigma^+\in \Re_+^d$ where $\sigma^+_i=\sigma_i+\lambda>0$ for any $i\in [\tilde k]$ and $\sigma^+_i=\sigma_i$ for any $i\in [\tilde k+1, d]$. Given $\sigma^+_1\ge \cdots \ge \sigma^+_d\ge 0$, we define a function $\gamma(\bm \sigma^+):\Re_+^d\to \Re_+$ as
\[\gamma(\bm \sigma^+)= \sum_{i\in [\tilde k]} \frac{1}{\sigma_i^{+}}=\sum_{i\in [\tilde k]} \frac{1}{\sigma_i+\lambda}.\]
By \Cref{def:gamma}, the function $\Gamma(\bm X; \lambda)$ depends only on the eigenvalues of $\bm X$ and can be characterized by $\gamma$, that is, $\Gamma(\bm X; \lambda)=\gamma(\bm \sigma^+)$. Then,
 \citet[Theorem 8]{kim2022convexification} implies that
 $$\conv\Gamma(\bm X; \lambda)=\conv\gamma(\bm \sigma^+).$$
This shifts our attention to deriving  the convex envelope of $\conv \gamma$.
 
According to \citet[Theorems 8 and 9]{li2024best}, the convex envelope of $\gamma$ is defined as
\begin{align*}
  \conv \gamma(\bm \sigma^+) =  \sum_{i\in [\ell]} \frac{1}{\sigma_i^+} + \frac{(\tilde k-\ell)^2}{\sum_{i\in [\ell+1, d]} \sigma_i^+},   
\end{align*}
where $0\le \ell \le \tilde k-1$ denotes the unique integer  satisfying $\sigma^+_{\ell}>\frac{1}{\tilde k-\ell}\sum_{i\in[\ell+1,d]} \sigma^+_i\ge \sigma^+_{\ell+1} $ with the convention $\sigma^+_0=\infty$. Applying \Cref{lem:inv} with $s=d$, we get $\ell=\eta$.
 Hence, we can rewrite $  \conv\gamma$ as
\begin{align*}
  \conv \gamma(\bm \sigma^+)   =  \sum_{i\in [\eta]} \frac{1}{\sigma_i^+} + \frac{(\tilde k-\eta)^2}{\sum_{i\in [\eta+1, d]} \sigma_i^+} =  \sum_{i\in [\eta]} \frac{1}{\sigma_i+\lambda} + \frac{(\tilde k-\eta)^2}{\sum_{i\in [\eta+1, d]} \sigma_i + (\tilde k-\eta)\lambda},  
\end{align*}
where the second equation is because of the definition of $\bm \sigma^+$.

Observe that $\conv \Gamma (\bm X; \lambda)$ is a spectral function, since it can  be characterized by $\conv \gamma(\bm \sigma^+)$, a function defined on the eigenvalues of $\bm X$. According to standard results on spectral functions \citep{drusvyatskiy2015variational,lewis1995convex},  any subgradient of $\conv\Gamma(\bm X)$ at $\bm X$ takes the form $\bm Q \Diag(\bm \varsigma)\bm Q^{\top}$, where $\bm \varsigma$ is a subgradient of underlying function (i.e., $\conv \gamma(\bm \sigma^+)$)  on eigenvalues $\bm \sigma$. The fact that $\bm \varsigma\in \partial \conv \gamma(\bm \sigma^+)$ directly follows from \citet[Proposition 8]{li2024best}.  \qed
\end{proof}

\subsection{Proof of \Cref{lem:gamma}}
\label{proof:gamma}
\begin{proof}{Proof.}
Our proof is split into two parts depending on whether the rank of $\bm X$ attains $\tilde k$ or not.  To begin, we let $\sigma_1\ge \cdots \ge \sigma_d \ge 0$ denote the eigenvalues  of $\bm X$. 
\begin{enumerate}[(i)]
        \item The rank of $\bm X$ is equal to $\tilde k$. Without the loss of generality, suppose that  $0\le \eta \le \tilde k-1$ is the largest integer satisfying $\sigma_{\eta}>\sigma_{\eta+1}=\cdots =\sigma_{\tilde k}$, where $\sigma_0=\infty$. Then, given $\sigma_{\tilde k}>\sigma_{\tilde k+1}=\cdots=\sigma_d=0$, it is easy to verify that
        \[\sigma_{\eta}>\frac{1}{\tilde k-\eta}\sum_{i\in[\eta+1,d]} \sigma_i = \frac{1}{\tilde k-\eta}\sum_{i\in[\eta+1,\tilde k]} \sigma_i= \sigma_{\eta+1},\]
        which implies that $\eta$ satisfies the conditions in \Cref{lem:eta}. Then, according to \Cref{prop:conv}, $\conv \Gamma(\bm X; \lambda)$ is equal to
        \begin{align*}
  \conv\Gamma(\bm X; \lambda)  & = \sum_{i\in [\eta]} \frac{1}{\sigma_i+\lambda} + \frac{(\tilde k-\eta)^2}{\sum_{i\in[\eta+1,d]}\sigma_{i}+ (\tilde k-\eta) \lambda} = \sum_{i\in [\eta]} \frac{1}{\sigma_i+\lambda} + \frac{\tilde k -\eta}{\sigma_{\eta+1}+\lambda}  =\sum_{i\in [\tilde k]} \frac{1}{\sigma_i+\lambda}\\  &= \sum_{i\in [d]} \frac{1}{\sigma_i+\lambda} - \frac{d-\tilde k}{\lambda}=\tr((\bm X+\lambda\bm I_d)^{-1}) - \frac{d-\tilde k}{\lambda},      
        \end{align*}
    where the second and third equations are because $\sigma_{\eta+1}=\cdots=\sigma_{\tilde k}>0=\sigma_{\tilde k+1}=\cdots =\sigma_d$.
    
    \item The rank of $\bm X$ is less than $\tilde k$. Let $\eta$ denote its rank, and thus $\eta\le \tilde k-1$. It is clear that $\sigma_{\eta}>0=\sigma_{\eta+1}=\cdots= \sigma_d=0$. Given this, 
 we can readily check that
$\sigma_{\eta}>\frac{1}{\tilde k-\eta}\sum_{i\in[\eta+1,d]} \sigma_i = 0 = \sigma_{\eta+1},$
    indicating that the integer $ \eta$ exactly satisfies the conditions in \Cref{lem:eta}. Then, following the analysis of Part (i), we conclude that
    \begin{align*}
  \conv\Gamma(\bm X;\lambda)  &= \sum_{i\in [\eta]} \frac{1}{\sigma_i+\lambda} + \frac{(\tilde k-\eta)^2}{\sum_{i\in[\eta+1,d]}\sigma_{i}+ (\tilde k-\eta) \lambda}= \sum_{i\in [\eta]} \frac{1}{\sigma_i+\lambda} + \frac{\tilde k-\eta}{\lambda} = \sum_{i\in [\tilde k]} \frac{1}{\sigma_i+\lambda} \\
  & =\tr((\bm X+\lambda\bm I_d)^{-1}) - \frac{d-\tilde k}{\lambda}.       
    \end{align*}
    \end{enumerate}
We thus complete the proof.\qed
\end{proof}

\subsection{Proof of \Cref{them:compare}}
\label{proof:compare}
We begin by introducing  \textit{Schur-convex}, which are critical to proving our results.
\begin{definition}[\citealt{constantine1983schur}]\label{def:schur} A function $g: \Re^n\to \Re$ is Schur-convex if for all $\bm \nu, \bm \mu \in \Re^n$ such that $\bm \mu $ majorizes  $\bm \nu$,
one has that $g(\bm \mu) \ge g(\bm \nu)$. 
\end{definition}
Now we are ready to prove \Cref{them:compare}.
\begin{proof}{Proof.}
To begin, for a vector $\hat{\bm x}\in[0,1]^n$ with $\sum_{i\in [n]}\hat x_i=k$, we let $\bm X=\sum_{i\in [n]} \hat x_i \bm a_i\bm a_i^{\top}$, and let $\sigma_1\ge\cdots\ge \sigma_d\ge0$ denote the eigenvalues of $\bm X$. Let  $0\le \eta\le \tilde k-1$ denote the unique integer from \Cref{lem:eta}, that is, $\sigma_{\eta} > \frac{1}{\tilde k-\eta} \sum_{i\in [\eta+1,d]} \sigma_i \ge \sigma_{\eta+1}$.
The following of the proof includes three parts.  
\begin{enumerate}[(i)]
    \item For any $k\in [d,n]$, the rank of $\bm X$ must be bounded by $\tilde k$. Therefore, the identity $\conv\Gamma (\bm X;\lambda)+{(d-\tilde k)}/{\lambda}=\tr((\bm X+\lambda\bm I_d)^{-1})$ always holds based on \Cref{lem:gamma}, implying that the optimal values of \ref{aed-R2} and \ref{aed-R1} are the same.
    
\item For any $k\in [d-1]$, we begin by establishing a technical result on Schur-convexity. 
    \begin{claim}\label{claim}
     The function $\xi(\bm y)=\sum_{i\in [d]}\frac{1}{y_i+\lambda}:\Re^d_{+}\to\Re$ is Schur-convex.   
    \end{claim}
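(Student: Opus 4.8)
The plan is to establish the claim via the classical fact that a symmetric, convex function is Schur-convex, so that I only need to verify these two properties for $\xi$. First I would observe that $\xi$ is manifestly symmetric: permuting the entries of $\bm y$ merely reorders the summands $1/(y_i+\lambda)$ and leaves the value unchanged. Second, I would check convexity by noting that $\xi$ is separable, $\xi(\bm y)=\sum_{i\in[d]} g(y_i)$ with $g(t)=1/(t+\lambda)$, and that each $g$ is convex on $\Re_+$ because $g''(t)=2/(t+\lambda)^3>0$ whenever $t\ge 0$; here the standing hypothesis $\lambda>0$ is what keeps the denominator positive. A finite sum of convex functions is convex, so $\xi$ is convex on $\Re_+^d$.

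With symmetry and convexity in hand, Schur-convexity follows from the Hardy--Littlewood--P\'olya characterization of majorization: if $\bm\mu$ majorizes $\bm\nu$, then $\bm\nu$ can be written as a convex combination $\bm\nu=\sum_{\ell} \theta_\ell\, P_\ell\bm\mu$ of permutations $P_\ell\bm\mu$ of $\bm\mu$, with $\theta_\ell\ge 0$ and $\sum_\ell \theta_\ell=1$. Applying convexity and then symmetry of $\xi$ gives
\[ \xi(\bm\nu)\le \sum_{\ell} \theta_\ell\, \xi(P_\ell\bm\mu)=\sum_{\ell} \theta_\ell\, \xi(\bm\mu)=\xi(\bm\mu), \]
which is exactly the inequality required by \Cref{def:schur}. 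Alternatively, since $\xi$ is smooth on $\Re_+^d$, I could instead verify the Schur--Ostrowski criterion directly: for any $i\ne j$,
\[ (y_i-y_j)\Bigl(\tfrac{\partial \xi}{\partial y_i}-\tfrac{\partial \xi}{\partial y_j}\Bigr) =(y_i-y_j)\Bigl(\tfrac{1}{(y_j+\lambda)^2}-\tfrac{1}{(y_i+\lambda)^2}\Bigr)\ge 0, \]
because $t\mapsto 1/(t+\lambda)^2$ is decreasing on $\Re_+$, so the two factors always share the same sign.

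I expect no genuine obstacle here, as the statement is a routine consequence of separable convexity; the only point requiring care is confirming that the scalar building block $g$ is convex and smooth on the whole relevant domain, which is precisely where the assumption $\lambda>0$ enters (it bounds the denominators away from zero). The remaining work is simply to cite, or briefly reproduce, the lemma that a symmetric and convex function is Schur-convex.
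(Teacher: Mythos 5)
Your proposal is correct and follows essentially the same route as the paper: both arguments reduce the claim to the standard fact that a symmetric convex function is Schur-convex, with symmetry from permutation invariance of the sum and convexity from the separable structure $\xi(\bm y)=\sum_i 1/(y_i+\lambda)$ with $\lambda>0$. The extra details you supply (the Hardy--Littlewood--P\'olya decomposition and the Schur--Ostrowski check) are correct but only elaborate on what the paper cites directly.
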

    \begin{proof}{Proof.}
  It is recognized that every convex and symmetric function is Schur-convex (see \citealt{marshall1979inequalities}).    We observe that the function $\xi$ is convex and permutation-invariant with the entries of $\bm y$. Hence, it is Schur-convex.    \qedA
    \end{proof}

Applying Part (i) of \Cref{prop:conv} with $\tilde k=k$, we can transform $\conv\Gamma (\bm X;\lambda)$ into the function $\xi$:
   \begin{align*}
\conv\Gamma (\bm X;\lambda)  = \sum_{i\in [\eta]} \frac{1}{\sigma_i+\lambda} + \frac{k-\eta}{\frac{1}{k-\eta}\sum_{i\in [\eta+1, d]} \sigma_i + \lambda} = \sum_{i\in [d]}\frac{1}{\hat{\sigma}_i+\lambda} - \frac{d-k}{\lambda}= \xi(\hat{\bm \sigma}) - \frac{d-k}{\lambda},      
   \end{align*}
where  $\hat{\bm \sigma}\in \Re_+^d$ is defined as
    \[\hat \sigma_i = \sigma_i,  \forall i\in[\eta], \quad  \hat \sigma_i = \frac{1}{k-\eta} \sum_{i\in [\eta+1,d]} \sigma_i,   \forall i\in[\eta+1,k],  \quad \hat \sigma_i=0,  \forall i\in[k+1,d].\]
    
Given $\sigma_{\eta}>\frac{1}{k-\eta}\sum_{i\in[\eta+1,d]} \sigma_i\ge \sigma_{\eta+1} $,  we can verify that
\begin{align*}
&\sum_{i\in [\ell]} \hat{\sigma}_i =  \sum_{i\in [\ell]} {\sigma}_i, \forall \ell\in [\eta],  \quad \sum_{i \in [\ell]}  \hat{\sigma}_i \ge \sum_{i\in [\eta]}  \hat{\sigma}_i +(\ell-\eta)\sigma_{\eta+1}     \ge  \sum_{i\in [\ell]} {\sigma}_i, \forall \ell\in [\eta+1,k-1], \\  &\sum_{i \in [\ell]}  \hat{\sigma}_i = \sum_{i\in [d]}  \sigma_{i}     \ge  \sum_{i\in [\ell]} {\sigma}_i, \forall \ell\in [k, d-1], \quad \text{and} \quad  \sum_{i\in [d]}\hat{\sigma}_i = \sum_{i\in [d]}   \sigma_i, 
\end{align*}
which means that $\hat{\bm \sigma}$ majorizes $\bm \sigma$.  By  the Schur-convexity of $\xi$ in \Cref{claim}, we get
\begin{align*}
 \tr((\bm X+\lambda\bm I_d)^{-1})=  \sum_{i\in [d]}\frac{1}{\sigma_i+\lambda} =\xi(\bm \sigma) &\le \xi(\hat{\bm \sigma}) =\conv \Gamma(\bm X;\lambda) + \frac{d-k}{\lambda}. 
\end{align*}
We thus conclude that the objective value of \ref{aed-R2} is no less than that of \ref{aed-R1}.
\item Finally, we prove the dominance of \ref{aed-R2} over \ref{amesp-R}. Given the vector $\hat{\bm x}$, let
$\beta_1\ge \cdots \ge \beta_n$ be the eigenvalues of $\sum_{i\in [n]}\hat x_i\bm h_i\bm h_i$. By \Cref{def:phi},  the objective function of \ref{amesp-R} can be represented as
\[ \Phi\bigg (\sum_{i\in [n]}\hat x_i\bm h_i\bm h_i\bigg) + \frac{d-k}{\lambda} = \phi(\bm \beta) + \frac{d-k}{\lambda}.\]

We construct an alternative variant, $\bm \beta^+\in \Re_+^n$, of the vector $\bm \sigma\in \Re_+^d$ as shown below.
\[\beta^+_i = \sigma_i+\lambda, \forall i\in[k], \ \ \beta^+_i=\sigma_i, \forall i\in[k+1,d], \beta_i^+=0, \forall i\in [d+1, n].\]

In fact, $\bm \beta^+$ majorizes $\bm \beta$, as demonstrated by \citet[Lemma 1]{li2024augmented}. In addition, following \Cref{claim}, we can show that the function $\phi$ from \Cref{def:phi} is also Schur-convex, implying that
\[ \phi(\bm \beta)  \le  \phi(\bm \beta^+)=
 \sum_{i\in [\iota]} \frac{1}{\beta^+_i} + \frac{(k-\iota)^2}{\sum_{i\in [\iota+1, n]} \beta^+_i},\]
where $0\le \iota \le k-1$ is a unique integer such that  $\beta^+_{\iota}>\frac{1}{k-\iota}\sum_{i\in[\iota+1,n]} \beta^+_i\ge \beta^+_{\iota+1} $ with $\beta_0=\infty$.
Next,
to prove the result, it suffices to show that the objective function of \ref{aed-R2} (i.e., $\conv \Gamma(\bm X;\lambda)+(d-\tilde k)/\lambda$) is exactly equal to $\phi(\bm \beta^+)+(d-k)/\lambda$.
\begin{enumerate}
\item $k\in [d]$. As $\tilde k=k$ holds in this context, we can verify that the construction $\bm \beta^+$ follows from the variant of $\bm \sigma$ in \Cref{lem:inv} with $s=n$, implying that $\eta=\iota$. Then, it follows that
\[\phi(\bm \beta^+)=\sum_{i\in [\eta]} \frac{1}{\beta^+_i} + \frac{(k-\eta)^2}{\sum_{i\in [\eta+1, n]} \beta^+_i}=\sum_{i\in [\eta]} \frac{1}{\sigma_i+\lambda} + \frac{(k-\eta)^2}{\sum_{i\in [\eta+1, d]} \sigma_i + (k-\eta)\lambda}=\conv\Gamma(\bm X;\lambda),\]
where the second equation is obtained by substituting the expression of $\bm \beta^+$ and omitting the zero-sum term, i.e., $\sum_{i\in [d+1,n]}\beta_i^+=0$.
\item $k\in [d+1, n]$. Let $r\in [d]$ denote the rank of $\bm X$. In this case, by the construction of $\bm \beta^+$, it is easy to verify that $\beta_r^+> \frac{1}{k-r} \sum_{i\in [r+1,n]} \beta_i^+ = \frac{1}{k-r}(k-r)\lambda= \beta_{r+1}^+$. Since $\iota$ is the unique index satisfying $\beta^+_{\iota}>\frac{1}{k-\iota}\sum_{i\in[\iota+1,n]} \beta^+_i\ge \beta^+_{\iota+1}$, $\iota=r$ must hold. Thus, the function value $\phi(\bm \beta^+)+(d-k)/\lambda $ reduces to
\begin{align*}
\phi(\bm \beta^+)+\frac{d-k}{\lambda} &=  \sum_{i\in [r]} \frac{1}{\beta^+_i} + \frac{(k-r)^2}{(k-r)\lambda} +  \frac{d-k}{\lambda}=  \sum_{i\in [r]} \frac{1}{\sigma_i+\lambda} + \frac{d-r}{\lambda}=\tr((\bm X+\lambda\bm I_d)^{-1})\\
&=\conv\Gamma(\bm X;\lambda),
\end{align*}
where the last equation is from the result of Part (i). 
\end{enumerate} 
 This concludes the proof.
    \qed
\end{enumerate}
\end{proof}

\subsection{Proof of \Cref{them:approx}}
\label{proof:approx}
\begin{proof}{Proof.}
We will prove the approximation ratios and time complexities of \Cref{algo:forward} and \Cref{algo:backward}, respectively.
\begin{enumerate}[(i)]
    \item $k\in [d-1]$. For any sized-$s\in [d-1]$ subset $S$, according to \Cref{lem:techniques}, we have that
    \begin{align}\label{eq:sequen}
  \min_{i\in S} f(S\setminus \{i\}) \le \frac{d-s+s\rho+1}{d-s+s\rho} f(S) \le \frac{d-s+1}{d-s} f(S),      
    \end{align}
where the second inequality is from  $\rho \in (0,1)$.
Suppose that $S^*$ is an optimal solution of \ref{aed}. Here, $|S^*|=k\le d-1$. By
applying \eqref{eq:sequen} with $S=S^*$ and $s=k$,  we have that
\[\min_{i\in S^*} f(S^*\setminus\{i\}) \le  \frac{d-k+1}{d-k} f(S^*) = \frac{d-k+1}{d-k} z_k.\]
Let $i^*=\argmin_{i\in S^*} f(S^*\setminus\{i\})$. Then, we can 
apply \eqref{eq:sequen} again with $S=S^*\setminus\{i^*\}$ and $s=k-1$. By doing this with $s=k, k-1, \cdots, 2$ in a sequential manner, we get 
\[\min_{i\in S^*} f(\{i\}) \le \prod_{s\in [2, k]} \frac{d-s+1}{d-s} f(S^*) = \frac{d-1}{d-k} z_k.\]
Clearly,  the first iteration of \Cref{algo:forward} selects $i^*\in \argmin_{i\in [n]} f(\{i\})$. Thus, the approximation ratio is immediate because $f(S_F)\le \min_{i\in S^*} f(\{i\})$ must hold.  

In \Cref{algo:forward}, there are $k$ iterations in total, and at each iteration,  finding the next point $i^*$ and updating $\bm \Lambda$ take $\O(nd^2)$ and $\O(d^2)$ operations, respectively. Hence, the overall time complexity of \Cref{algo:forward} is $\O(knd^2)$. 

\item $k\in [d]$. For the output $S_B$ of \Cref{algo:backward}, using \Cref{lem:techniques} with $[n]$ and $s=n,n-1, \cdots, k+1$, we have that
\[f(S_B) \le \prod_{s\in [k+1, n]} \frac{s-d+d\rho+1}{s-d+d\rho} f([n]) =  \frac{n-d+1+d\rho}{k-d+1+d\rho} z_n \le \frac{n-d+1}{k-d+1} z_k,\]
where the last inequality is because $\rho>0$ and $z_n\le z_k$.

\Cref{algo:backward} begins with computing the sum of $n$ rank-one matrices of size $d\times d$ and inverting this matrix, which requires $\O(nd^2+ d^3)$ operations. 
Then, \Cref{algo:backward} performs $(n-k)$ iterations in its \textit{for} loop, where each iteration takes $\O(nd^2)$ and $\O(d^2)$ operations, similar to \Cref{algo:forward}. Hence, the time complexity of \Cref{algo:backward}
 is $\O((n-k)nd^2)$. \qed
\end{enumerate}
\end{proof}
\end{APPENDICES}

\end{document}